\setlist[enumerate, 1]{label=(\roman*)}
\setlist[itemize]{leftmargin=1.5em}
\setlist[description]{leftmargin=1em}
\theoremstyle{plain}
\newtheorem{theorem}{Theorem}[section]
\newtheorem{corollary}[theorem]{Corollary}
\theoremstyle{definition}
\newtheorem{definition}{Definition}[section]
\theoremstyle{remark}
\newtheorem{remark}{Remark}[section]
\newcommand{\Z}{\mathbb{Z}}
\newcommand{\N}{\mathbb{N}}
\newcommand{\R}{\mathbb{R}}
\newcommand\varpm{\mathbin{\vcenter{\hbox{%
  \oalign{\hfil$\scriptstyle+$\hfil\cr
          \noalign{\kern-.3ex}
          $\scriptscriptstyle({-})$\cr}%
}}}}
\newcommand{\bela}[1]{\begin{equation}\label{#1}}
\newcommand{\ela}{\end{equation}}
\newcommand{\bear}[1]{\begin{array}{#1}}
\newcommand{\ear}{\end{array}}
\newcommand{\bN}{\mbox{\boldmath $N$}}
\newcommand{\br}{\mbox{\boldmath $r$}}
\newcommand{\as}{\\[.6em]}
\definecolor{grey}{rgb}{0.5,0.5,0.5}
\newcommand{\ignore}[1]{}
\newcommand{\jac}[1]{\operatorname{#1}}
\newcommand{\blank}[1]{}
\newcommand{\ICe}{IC$_\text{e}$-net\xspace}
\newcommand{\ICh}{IC$_\text{h}$-net\xspace}
\newenvironment{acknowledgements}{
\paragraph{Acknowledgement.}
}
\title{On mutually diagonal nets on (confocal) quadrics and
3-dimensional webs}
\author{Arseniy V. Akopyan$^1$, Alexander I. Bobenko$^2$,  Wolfgang K. Schief$^3$ and Jan Techter$^2$\bigskip\\  
$^1$Institute of Science and Technology Austria (IST Austria),\\ Am Campus 1, A--3400
Klosterneuburg, Austria\bigskip\\
$^2$Institut f\"ur Mathematik, TU Berlin, \\ Str.\@ des 17.\@ Juni 136, 10623 Berlin, Germany\bigskip\\
$^3$School of Mathematics and Statistics,\\ The University of New South Wales, Sydney, NSW 2052, Australia}
\date{\today}
\begin{document}

\maketitle

\begin{abstract}
Canonical parametrisations of classical confocal coordinate systems are introduced and exploited to construct non-planar analogues of incircular (IC) nets on individual quadrics and systems of confocal quadrics. Intimate connections with classical deformations of quadrics which are isometric along asymptotic lines and circular cross-sections of quadrics are revealed. The existence of octahedral webs of surfaces of Blaschke type generated by asymptotic and characteristic lines which are diagonally related to lines of curvature is proven theoretically and established constructively. Appropriate samplings (grids) of these webs lead to three-dimensional extensions of non-planar IC nets. Three-dimensional octahedral grids composed of planes and spatially extending (checkerboard) IC-nets are shown to arise in connection with systems of confocal quadrics in Minkowski space. In this context, the Laguerre geometric notion of conical octahedral grids of planes is introduced. The latter generalise the octahedral grids derived from systems of confocal quadrics in Minkowski space. An explicit construction of conical octahedral grids is presented. The results are accompanied by various illustrations which are based on the explicit formulae provided by the theory. 
\end{abstract}




\section{Introduction}

Planar and spatial systems of confocal quadrics \cite{HCV52} have been the subject of extensive studies in both mathematics and physics since the discovery of their optical properties by the Ancient Greek. The fascination with confocal quadrics over the centuries culminated in the famous Ivory and Graves-Chasles theorems which were given a modern perspective by Arnold \cite{Arnold}. Their gravitational properties were studied by Newton and Ivory (see, e.g.,\ \cite{Arnold,FuchsTabachnikov}) and deep connections with dynamical systems and spectral theory have also been established \cite{Moser}. Confocal quadrics go hand in hand with confocal coordinate systems which have been employed, for instance, in the theory of separable linear differential equations such as the Laplace equation, which, in turn, is closely related to the theory of special functions such as (generalised) Lam\'e functions~\cite{Erdelyi}. 

The present paper is based on the following, apparently novel, property of planar confocal quadrics. A confocal system of conics on the plane is represented by
\bela{I1}
  \frac{x^2}{\lambda+a} + \frac{y^2}{\lambda+b} = 1,\quad a>b,
\ela
where the parameter $\lambda$ labels the individual conics. In \cite{BSST17}, the privileged parametrisation 
\bela{I2}
\left(\bear{c}x\\ y\ear\right) = \sqrt{a-c}\left(\bear{c}
     \jac{sn}(s,k)\jac{ns}(t,k)\\ 
     \jac{cn}(s,k)\jac{ds}(t,k)
  \ear\right),\quad k = \sqrt{\frac{a-b}{a-c}}
\ela
of the confocal conics outside an ellipse
\bela{I3}
 \frac{x^2}{a-c} + \frac{y^2}{b-c} = 1,\quad c<b
\ela
has been recorded in connection with a canonical discretisation of confocal quadrics. The two families of ellipses and hyperbolae of the confocal system are given by $t=\mbox{const}$ and $s=\mbox{const}$ respectively. As pointed out in \cite{BSST17}, the curves $s\pm t=\mbox{const}$ constitute straight lines so that the net formed by the two families of confocal conics and the net composed of the two families of straight lines are mutually diagonal in the sense of Blaschke \cite{B28} (cf.\ Definition \ref{mutual}). Now, as an implication of this fact, we conclude that one can construct pairs of grids of $\Z^2$ combinatorics by sampling the straight lines and confocal conics in such a manner that the confocal conics of the second grid pass through opposite vertices of the quadrilaterals formed by the straight lines of the first grid as depicted in Figure \ref{pic_sampling}. In \cite{BSST17}, we have shown that repeated application of Ivory's classical theorem \cite{IT} implies that the quadrilaterals circumscribe circles so that the grid of lines constitutes an incircular (IC) net. Such nets have been introduced in \cite{B} and generalised and discussed in great detail in \cite{AB}. In particular, it is known that the lines of an IC-net are tangent to a confocal conic which, in the current situation, turns out to be the ellipse \eqref{I3}. This connection between incircles and mutually diagonal nets of confocal conics and straight lines which are tangent to a confocal conic is consistent with the classical Graves-Chasles theorem~\cite{Darboux}.
\begin{figure}
  \begin{center}
    \scalebox{1.0}{\input{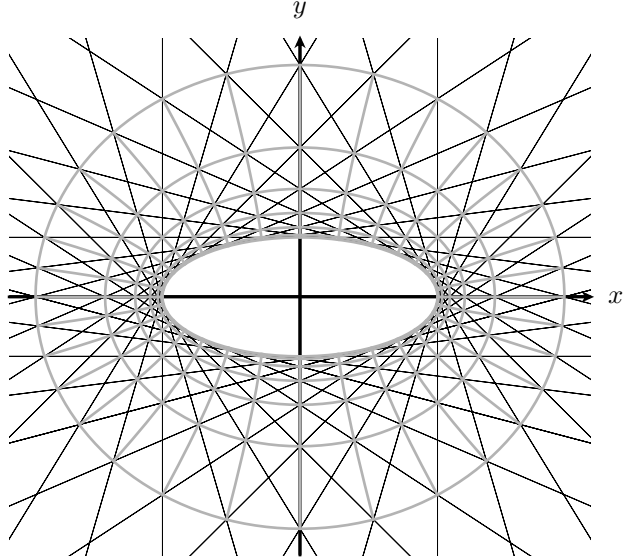}}
  \end{center}
  \caption{A sampling of mutually diagonal nets of straight lines and confocal conics which forms an IC-net.}
  \label{pic_sampling}
\end{figure}
  
Here, we address the natural question as to how one may generalise grids of IC type (and their counterparts in hyperbolic geometry) to two-dimensional grids of lines on surfaces and three-dimensional grids of lines/planes in space. It turns out that confocal systems of quadrics in space are custom-made for the construction of such spatial two- and three-dimensional grids. In fact, the novel representation of three-dimensional confocal coordinate systems presented in \cite{BSST17} which encapsulates, in particular, a three-dimensional analogue of the parametrisation \eqref{I2} is seen to provide not only compact algebraic proofs of the theorems stated in the main body of this paper but also explicit formulae which considerably simplify the visualisation of the results. Thus, in Section 2, we prove that there exists a unique parametrisation of systems of confocal quadrics such that the coordinate systems on all confocal quadrics are simultaneously isothermal-conjugate \cite{Eisenhart1903,Eisenhart1960}, that is, all second fundamental forms are conformally flat. As a corollary of this remarkable fact, we conclude in Section 5 that the two congruences of (straight) asymptotic lines on the family of one-sheeted hyperboloids of a system of confocal quadrics and the four congruences of characteristic lines on the two families of confocal ellipsoids and two-sheeted hyperboloids give rise to an octahedral web (``Achtflachgewebe'') in the sense of Blaschke \cite{B28b}. It turns out that any of the associated three-dimensional spatial grids generated by appropriately sampling the asymptotic and characteristic lines may be regarded as a spatial extension of an underlying IC-net.

The connection between IC-nets and the above-mentioned octahedral grids is a consequence of the theory developed in Section 3. Therein, it is established that IC-nets may be regarded as planar degenerations of spatial AC-grids which constitute appropriate samplings of asymptotic lines on one-sheeted hyperboloids. Such samplings are defined by the property that opposite vertices of the  quadrilaterals formed by the sampled asymptotic lines be connected by lines of curvature. The existence of such samplings is based on the fact that the nets of asymptotic lines and lines of curvature are, once again, mutually diagonal \cite{K76,K77}. In this connection, it is observed that it is a classical fact \cite{HCV52} that the one-parameter family of one-sheeted hyperboloids of a system of confocal quadrics may be regarded as being generated by the deformation of a single one-sheeted hyperboloid which is ``isometric along the asymptotic lines''. In this sense, AC-grids constitute spatial ``isometric'' deformations of IC-nets. A similar statement applies to IC-nets in hyperbolic geometry (HIC-nets) which are represented by planar grids composed of circles which are such that their centres are collinear and the ``quadrilaterals'' admit incircles. This is established in Section 4 in which it is shown that any CC-grid may be deformed into an HIC-net.
By definition, a CC-grid constitutes a sampling of the classical parallel circular cross-sections of an ellipsoid (or a hyperboloid) which is such that there exists a sampling of lines of curvature which is ``diagonal'' to the sampling of circular lines. The deformability of such CC-grids which is ``isometric along the circular lines'' is a direct consequence of the deformability of the two continuous families of circular cross-sections of an ellipsoid as asserted but not proven in \cite{HCV52}. Therein, it is observed that the one-parameter family of deformed ellipsoids obtained in such a manner is not confocal. However, we here  demonstrate that this one-parameter family of deformed ellipsoids may by generated by normalising the intermediate semi-axis of the one-parameter family of ellipsoids of a suitable system of confocal quadrics.

In Section 6, we introduce the Laguerre geometric notion of conical octahedral grids which constitute octahedral grids of oriented planes which are such that any four planes meeting at a point are in oriented contact with a circular cone. Octahedral webs of (non-oriented) planes and their canonical samplings (octahedral grids of planes) were studied in great detail by Sauer \cite{Sauer1925}. We demonstrate that conical octahedral grids are intimately related to systems of confocal quadrics in Minkowski space and (checkerboard) IC-nets. IC-nets are closely related to Poncelet grids \cite{AB} (also known as Poncelet-Darboux grids) introduced by Darboux in \cite{D72} and studied by various authors \cite{LT07,S07} (see, also, the monograph \cite{DR11}). A Poncelet-type theorem for geodesics on ellipsoids in Minkowski space and its connection with confocal quadrics is discussed in \cite{GKT07}. It turns out that the planes of a conical octahedral grid are in oriented contact with a common sphere. In this connection, it is noted that B\"ohm \cite{B} has shown that the requirement that the octahedra of an octahedral grid of (non-oriented) planes or the hexahedra of a grid of (non-oriented) planes of $\Z^3$ combinatorics circumscribe distinct spheres is too restrictive to obtain nontrivial three-dimensional IC-net-type configurations.


\section{Lines of curvature on confocal quadrics}

Any triple of real numbers $a > b> c$ gives rise to an associated system of confocal quadrics
\bela{E1}
  \frac{x^2}{\lambda + a} + \frac{y^2}{\lambda + b} + \frac{z^2}{\lambda + c} = 1
\ela
in a three-dimensional Euclidean space $\R^3$ with coordinates $\br = (x,y,z)$, where the parameter $\lambda$ labels the confocal quadrics $\mathcal{Q}_\lambda$. The values $\lambda = -a,-b,-c$ may be interpreted as limiting cases in which the quadrics become planar. The system of (non-planar) confocal quadrics is naturally decomposed into the three families of quadrics 
\bela{E2}
 \begin{split}
  \frac{x^2}{u_1 + a} + \frac{y^2}{u_1 + b} + \frac{z^2}{u_1 + c} & = 1\\
  \frac{x^2}{u_2 + a} + \frac{y^2}{u_2 + b} + \frac{z^2}{u_2 + c} & = 1\\
  \frac{x^2}{u_3 + a} + \frac{y^2}{u_3 + b} + \frac{z^2}{u_3 + c} & = 1\as
-a < u_1 < -b < u_2 <- c &< u_3 
 \end{split}
\ela
of different signature. Specifically, the parameters $u_1,u_2$ and $u_3$ label two-sheeted hyperboloids, one-sheeted hyperboloids and ellipsoids respectively. Any triple of confocal hyperboloids of different signature meet orthogonally in a unique point $(x,y,z)$ of any octant of $\R^3$ as indicated in Figure \ref{pic_confocal}.
\begin{figure}
  \centering
  \includegraphics[scale=0.18]{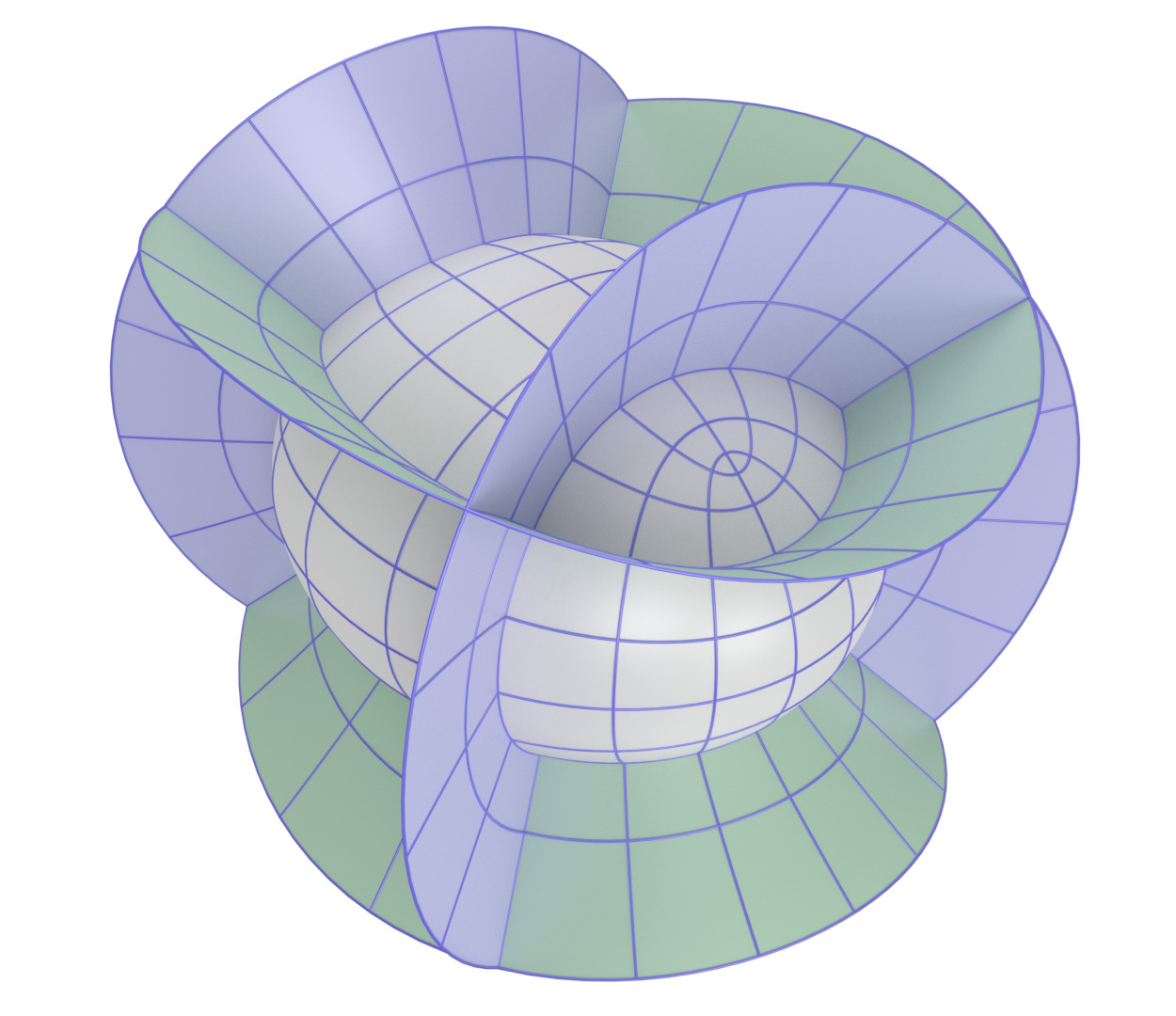}
\caption{The intersection of three different types of confocal quadrics.}
\label{pic_confocal}
\end{figure}
This fact may be expressed by the formulae
\bela{E3}
\begin{split}
  x^2 &= \frac{(u_1+a)(u_2+a)(u_3+a)}{(a-b)(a-c)}\\ 
  y^2 & = \frac{(u_1+b)(u_2+b)(u_3+b)}{(b-a)(b-c)}\\
  z^2 & = \frac{(u_1+c)(u_2+c)(u_3+c)}{(c-a)(c-b)}  
\end{split}
\ela
so that $\br(u_1,u_2,u_3)$ constitutes an orthogonal coordinate system in (the octants of) $\R^3$ to which one commonly refers as confocal coordinate system. Due to the orthogonality of the coordinate lines, which are the curves of intersection of pairs of confocal quadrics of different signature, the two families of coordinate lines on the quadrics $u_i=\mbox{const}$ parametrised by $u_k$ and $u_l$ ($i,k,l$ distinct) represent lines of curvature. 

Confocal quadrics and coordinate systems have a large number of interesting geometric and algebraic properties, some of which will be key to the discussion in the following sections. Here, we mention some important facts which will be exploited in the next section. Thus, for any reparametrisation of the confocal coordinates $u_i = u_i(s_i)$, the second fundamental forms
\bela{E4}
  \mbox{II}_{ik} = -d\br\cdot d\bN_{ik} = e_{ik}ds_i^2 + g_{ik}ds_k^2
\ela
of the quadrics $u_l=\mbox{const}$ with unit normal $\bN_{ik}\sim\br_{u_l}$ admit the factorisation property
\bela{E5}
 \frac{e_{ik}}{g_{ik}} =- \frac{U_i(s_i)}{U_k(s_k)},
\ela
where
\bela{E6}
  U_i = \frac{1}{4}\frac{u_i'^2}{(u_i+a)(u_i+b)(u_i+c)}.
\ela
Hence, we have established the following theorem.

\begin{theorem}\label{parametrisation}
There exists a unique parametrisation $u_i = u_i(s_i)$ of confocal coordinate lines (up to a scaling of the $s_i$ by  the same constant) such that the second fundamental forms of the confocal quadrics are conformally flat, that is,
\bela{E7}
  \mbox{\rm II}_{12} \sim ds_1^2 + ds_2^2,\quad \mbox{\rm II}_{13} \sim ds_1^2 - ds_3^2,\quad \mbox{\rm II}_{23} \sim ds_2^2 + ds_3^2.
\ela
\end{theorem}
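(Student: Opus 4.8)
The plan is to convert the simultaneous conformal flatness demanded in \eqref{E7} into a pointwise condition on the factors $U_i$ of \eqref{E6}, and then to solve the resulting ordinary differential equations. First I would record the elementary observation that a diagonal binary form $e_{ik}\,ds_i^2 + g_{ik}\,ds_k^2$ is a conformal multiple of a constant-coefficient flat form in the coordinates $s_i,s_k$ exactly when $e_{ik}/g_{ik}$ is a (nonzero) constant; moreover it is conformal to $ds_i^2 + ds_k^2$ iff that constant is $+1$ and to $ds_i^2 - ds_k^2$ iff it is $-1$, the sign being governed by whether $e_{ik}$ and $g_{ik}$ agree in sign.

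The conceptual heart of the argument is then the factorisation \eqref{E5}: since $e_{ik}/g_{ik} = -U_i(s_i)/U_k(s_k)$ has a numerator depending only on $s_i$ and a denominator only on $s_k$, a separation-of-variables argument shows that this ratio is constant if and only if each $U_i$ is \emph{separately} constant. Imposing the three normalisations of \eqref{E7} forces $-U_1/U_2 = +1$, $-U_1/U_3 = -1$, $-U_2/U_3 = +1$, i.e.\ $U_1 = U_3 = -U_2$; these three demands are mutually consistent. Their common absolute value is forced to be positive, and the signs in \eqref{E7} emerge automatically, because by \eqref{E6} the sign of $U_i$ equals the sign of $(u_i+a)(u_i+b)(u_i+c)$, which is positive on the ranges $u_1\in(-a,-b)$ and $u_3\in(-c,\infty)$ but negative on $u_2\in(-b,-c)$.

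It remains to realise these constants. Writing the common value as $C>0$ (so $U_2 = -C$), equation \eqref{E6} becomes the separable first-order equation $(u_i')^2 = 4C\,(u_i+a)(u_i+b)(u_i+c)$, whose solution is the inverse of the elliptic integral
\[
  s_i = \frac{1}{2\sqrt{C}}\int \frac{du_i}{\sqrt{\big|(u_i+a)(u_i+b)(u_i+c)\big|}}.
\]
Since the integrand is of one sign and integrable on each of the three intervals, this defines $u_i(s_i)$ uniquely up to the (irrelevant) choice of origin on each coordinate line, which settles existence. For the uniqueness clause I would note that the single constant $C$ is the only remaining freedom: the common rescaling $s_i\mapsto\lambda s_i$ sends $C\mapsto\lambda^2 C$ and leaves all three relations in \eqref{E7} intact, whereas any non-uniform rescaling alters some ratio $|U_i/U_k|$ away from $1$ and thereby violates \eqref{E7}. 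Hence the parametrisation is unique up to a single common scaling of the $s_i$.

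The only real difficulty is bookkeeping rather than analysis: one must keep careful track of the signs of the three cubic products across the coordinate ranges to verify both that conformal flatness forces the three constants to coincide in absolute value and that the resulting signature pattern is exactly $+,-,+$. The inversion of the elliptic integral, by contrast, is entirely standard.
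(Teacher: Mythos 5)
Your proposal is correct and follows essentially the same route as the paper: there, Theorem \ref{parametrisation} is deduced directly from the factorisation property \eqref{E5}--\eqref{E6}, with conformal flatness reduced to the conditions $U_1=-U_2=U_3=\mbox{const}>0$ of \eqref{E8}, which are then solved by inverting the same elliptic ODEs (in Jacobi rather than Weierstrass form). Your separation-of-variables step and the sign bookkeeping over the intervals $-a<u_1<-b<u_2<-c<u_3$ merely make explicit what the paper leaves implicit.
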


\begin{remark}
It is noted that, in the classical literature, coordinates on a surface with respect to which the second fundamental form is conformally flat (modulo an appropriate re-parametrisation of the coordinate lines) are termed {\em isothermal-conjugate} \cite{Eisenhart1960}.
\end{remark}

It is evident that the determination of the unique parametrisation mentioned in the above theorem requires solving the ordinary differential equations
\bela{E8}
  U_1 = -U_2 = U_3 = \mbox{const} > 0,
\ela
where the $U_i$ are given by \eqref{E6}. Each of these differential equations is seen to be essentially the differential equation for the Weierstrass $\wp$-function. However, it turns out convenient to adopt an equivalent parametrisation in terms of Jacobi elliptic functions \cite{NIST}. This is achieved as follows. We first observe that the parametrisation \eqref{E3} reveals the factorisation property
\bela{E9}
  x = \frac{f_1(s_1)f_2(s_2)f_3(s_3)}{\sqrt{(a-b)(a-c)}},\quad
  y = \frac{g_1(s_1)g_2(s_2)g_3(s_3)}{\sqrt{(a-b)(b-c)}},\quad
  z = \frac{h_1(s_1)h_2(s_2)h_3(s_3)}{\sqrt{(a-c)(b-c)}}
\ela
with the functions $f_i$, $g_i$ and $h_i$ being related by the functional equations \cite{BSST17}
\bela{E10}
 \begin{aligned}
  f_1^2(s_1) + g_1^2(s_1) & = a-b,\quad & f_1^2(s_1) +h_1^2(s_1) & = a-c\\
  f_2^2(s_2) - g_2^2(s_2) & = a-b,\quad & f_2^2(s_2) +h_2^2(s_2) & = a-c\\
  f_3^2(s_3) - g_3^2(s_3) & = a-b,\quad & f_3^2(s_3) -h_3^2(s_3) & = a-c.
 \end{aligned}
\ela
For any solution of this system, the corresponding parameters $u_i$ may be reconstructed from the consistent system
\bela{E11}
 \begin{aligned}
   f_1^2(s_1) & = u_1 + a,\quad & g_1^2(s_1) & = -(u_1 + b),\quad & h_1^2(s_1) & = -(u_1 + c)\\
   f_2^2(s_2) & = u_2 + a,\quad & g_2^2(s_2) & = u_2 + b,\quad & h_2^2(s_2) & = -(u_2 + c)\\
   f_3^2(s_3) & = u_3 + a,\quad & g_3^2(s_3) & = u_3 + b,\quad & h_3^2(s_3) & = u_3 + c.
 \end{aligned}
\ela
The above functional equations are naturally parametrised in terms of Jacobi elliptic functions. Indeed, one may directly verify that these are satisfied by setting
\bela{E12}
  \begin{aligned}
  f_1(s_1) & =\sqrt{a-b}\,\jac{sn}(s_1,k_1),\quad & f_2(s_2) &= \sqrt{b-c}\,\frac{\jac{dn}(s_2,k_2)}{k_2},\quad & f_3(s_3) &= \sqrt{a-c}\,\jac{ns}(s_3,k_3)\\
  g_1(s_1) &=\sqrt{a-b}\,\jac{cn}(s_1,k_1),\quad & g_2(s_2) & = \sqrt{b-c}\,\jac{cn}(s_2,k_2),\quad & g_3(s_3) &= \sqrt{a-c}\,\jac{ds}(s_3,k_3)\\
 h_1(s_1) & =\sqrt{a-b}\,\frac{\jac{dn}(s_1,k_1)}{k_1},\quad & h_2(s_2) & = \sqrt{b-c}\,\jac{sn}(s_2,k_2),\quad & h_3(s_3) &= \sqrt{a-c}\,\jac{cs}(s_3,k_3),
 \end{aligned}
\ela
where the moduli of the elliptic functions are defined by
\bela{E13}
  k_1^2 = \frac{a-b}{a-c},\quad k_2^2 = \frac{b-c}{a-c} = 1-k_1^2,\quad k_3= k_1.
\ela
Furthermore, insertion of this parametrisation into \eqref{E9}, \eqref{E11} and \eqref{E6} leads to the following remarkable result.

\begin{theorem}\label{elliptic}
The second fundamental forms of the coordinate surfaces of the confocal coordinate system
\bela{E14}
  \left(\bear{c}x\\ y\\ z\ear\right) = \sqrt{a-c}\left(\bear{c}
     \jac{sn}(s_1,k_1)\jac{dn}(s_2,k_2)\jac{ns}(s_3,k_3)\\ 
     \jac{cn}(s_1,k_1)\jac{cn}(s_2,k_2)\jac{ds}(s_3,k_3)\\
     \jac{dn}(s_1,k_1)\jac{sn}(s_2,k_2)\jac{cs}(s_3,k_3)
\ear\right),
\ela
wherein the moduli $k_i$ are given by \eqref{E13}, are conformally flat with $U_1=-U_2=U_3 = 1/(a-c)$.
\end{theorem}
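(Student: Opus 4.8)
The plan is to verify the theorem by direct computation, reducing everything to the criterion already obtained in Theorem~\ref{parametrisation}: a confocal coordinate system is isothermal-conjugate exactly when the three quantities $U_i$ of \eqref{E6} satisfy \eqref{E8}, namely $U_1=-U_2=U_3=\text{const}>0$. Hence it suffices to show that the explicit Jacobi-elliptic parametrisation \eqref{E12}--\eqref{E14} yields $U_1=U_3=1/(a-c)$ and $U_2=-1/(a-c)$; conformal flatness of the second fundamental forms then follows from Theorem~\ref{parametrisation} together with the factorisation \eqref{E5}, so both assertions of the theorem are covered at once.

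First I would confirm that \eqref{E12} genuinely solves the functional equations \eqref{E10}, so that the reconstruction \eqref{E11} is a consistent definition of the $u_i$. Each identity in \eqref{E10} collapses to a standard Jacobi relation. For instance $f_1^2+g_1^2=(a-b)(\jac{sn}^2+\jac{cn}^2)=a-b$, while $f_1^2+h_1^2=(a-b)\jac{sn}^2+(a-b)\jac{dn}^2/k_1^2=a-c$ is equivalent to $k_1^2\jac{sn}^2+\jac{dn}^2=1$; the rows for $s_2$ hinge on $\jac{dn}^2-k_2^2\jac{cn}^2=1-k_2^2$ and on $\jac{dn}^2+k_2^2\jac{sn}^2=1$, exploiting the relation $k_1^2+k_2^2=1$ from \eqref{E13}. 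These checks are routine but should be recorded, since they are precisely what legitimises the use of \eqref{E11}.

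The substance of the proof is the evaluation of each $U_i$. Using \eqref{E11} and \eqref{E12} I would write the three denominator factors in \eqref{E6} as signed squares of Jacobi functions; for $i=1$, $u_1+a=(a-b)\jac{sn}^2$, $u_1+b=-(a-b)\jac{cn}^2$, and $u_1+c=-(a-c)\jac{dn}^2$ (using $(a-b)/k_1^2=a-c$), so that $(u_1+a)(u_1+b)(u_1+c)=(a-b)^2(a-c)\jac{sn}^2\jac{cn}^2\jac{dn}^2$. Differentiating $u_1=(a-b)\jac{sn}^2(s_1,k_1)-a$ with $\jac{sn}'=\jac{cn}\jac{dn}$ gives $u_1'^2=4(a-b)^2\jac{sn}^2\jac{cn}^2\jac{dn}^2$, and the Jacobi factors cancel identically in \eqref{E6}, leaving $U_1=1/(a-c)$. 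The same computation for $i=2$ (modulus $k_2$, with the factor $u_2+c=-(b-c)\jac{sn}^2$ supplying the extra sign) yields $U_2=-1/(a-c)$, and for the ellipsoid family $i=3$ (reciprocal/quotient functions $\jac{ns},\jac{ds},\jac{cs}$, with $\jac{ns}'=-\jac{cs}\jac{ds}$) the common factor $\jac{cn}^2\jac{dn}^2/\jac{sn}^6$ cancels to give $U_3=1/(a-c)$.

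There is no serious obstacle: the theorem is a verification. The only points demanding care are bookkeeping ones: tracking the signs dictated by \eqref{E11} (which flip on the factors corresponding to negative $u_i+\cdot$), matching each $s_i$ with its correct modulus via \eqref{E13}, and handling the quotient functions for $i=3$ so that the powers of $\jac{sn}$ cancel cleanly. Once $U_1=-U_2=U_3=1/(a-c)>0$ is established, \eqref{E8} holds and Theorem~\ref{parametrisation} delivers the conformal flatness \eqref{E7}, completing the proof.
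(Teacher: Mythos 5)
Your proposal is correct and takes essentially the same approach as the paper, which presents Theorem \ref{elliptic} as a direct verification: check that \eqref{E12} solves the functional equations \eqref{E10}, reconstruct the $u_i$ via \eqref{E11}, and insert into \eqref{E6} to find $U_1=-U_2=U_3=1/(a-c)$, with conformal flatness then following from Theorem \ref{parametrisation} and the factorisation \eqref{E5}. Your explicit computations of the $U_i$, including the sign bookkeeping from \eqref{E11} and the handling of the quotient functions for $i=3$, correctly supply the details the paper leaves to the reader.
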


The geometric implications of the above theorem will be revealed in the following section.


\section{Asymptotic lines on (confocal) one-sheeted hyperboloids}

In this section, we examine the interplay between lines of curvature and asymptotic lines on both a single one-sheeted hyperboloid and its associated family of confocal quadrics. This will lead to, for instance, a connection between privileged spatial deformations of one-sheeted hyperboloids and planar incircular (IC) nets \cite{AB}. To this end, it is convenient to regard a one-sheeted hyperboloid as being embedded in its associated system of confocal quadrics \eqref{E2}. This allows us to exploit the parametrisations of quadrics presented in the previous section. 
In order to proceed, we first need to recall the key notion of ``mutually diagonal nets'' \cite{B28,K76,K77}.

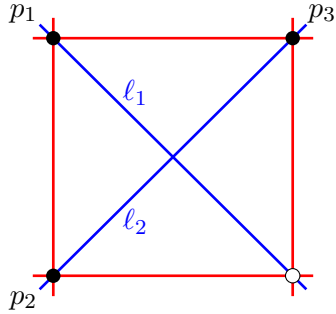
\begin{figure}
  \centering
  \begin{tikzpicture}[line cap=line join=round,>=stealth,x=1.0cm,y=1.0cm, scale=0.9]
      \draw [line width=1pt, color=red,  domain=-0.3:3.8] plot(0.,\x);
      \draw [line width=1pt, color=red,  domain=-0.3:3.8] plot(3.5,\x);
      \draw [line width=1pt, color=red,  domain=-0.3:3.8] plot(\x,0.);
      \draw [line width=1pt, color=red,  domain=-0.3:3.8] plot(\x,3.5);
      \draw [line width=1pt, color=blue,  domain=-0.2:3.7] plot(\x,\x);
      \draw [color=blue] (1.2, 0.8) node {$\ell_2$};
      \draw [line width=1pt, color=blue,  domain=-0.2:3.7] plot(\x,3.5 - \x);
      \draw [color=blue] (1.2, 2.7) node {$\ell_1$};      
      \coordinate[label={[label distance=3]135:$p_1$}] (p1) at (0.0,3.5);
      \fill (p1) circle (3pt);
      \coordinate[label={[label distance=3]225:$p_2$}] (p2) at (0.0,0.0);
      \fill (p2) circle (3pt);
      \coordinate[label={[label distance=3]45:$p_3$}] (p3) at (3.5,3.5);
      \fill (p3) circle (3pt);
      \fill [color=white] (3.5,0.0) circle (3pt);
      \draw (3.5,0.0) circle (3pt);
    \end{tikzpicture}
  \caption{Definition of two mutually diagonal nets. A ``quadrilateral'' of the net $\mathcal{N}_1$
  and its ``diagonals'' from the net $\mathcal{N}_2$.}
    \label{pic_diagonal}
\end{figure}

\begin{definition}\label{mutual}
Given a net $\mathcal{N}_1$ on a surface $\Sigma$, that is, a two-parameter family of curves on $\Sigma$ such that there exist exactly two curves of the family passing through any point on $\Sigma$, a net $\mathcal{N}_2$ is termed {\em diagonal to} $\mathcal{N}_1$ if the existence of a curve of $\mathcal{N}_2$ through a pair of opposite vertices of any ``quadrilateral'' formed by four curves of $\mathcal{N}_1$ implies that the remaining pair of opposite vertices is also connected by a curve of $\mathcal{N}_2$ (cf.\ Figure \ref{pic_diagonal}).
\end{definition}

\begin{remark}
It turns out that the above relation is symmetric \cite{B28}, that is, $\mathcal{N}_2$ being diagonal to $\mathcal{N}_1$ is equivalent to $\mathcal{N}_1$ being diagonal to $\mathcal{N}_2$. Furthermore, an alternative characterisation of this property is the following. Given a pair of intersecting lines $\ell_1$ and $\ell_2$ of $\mathcal{N}_2$ and a point $p_1$ on $\ell_1$, there exist two lines of $\mathcal{N}_1$ which pass through $p_1$ with the points of intersection with $\ell_2$ being denoted by $p_2$ and $p_3$. Then, the two additional lines of $\mathcal{N}_1$ passing through $p_2$ and $p_3$ meet on $\ell_1$ as illustrated in Figure \ref{pic_diagonal}.
\end{remark}

Orthogonal nets, that is, nets consisting of curves which are pairwise orthogonal, admit the following property \cite{K77} which, in particular, applies to nets composed of lines of curvature.

\begin{theorem}\label{bisection}
Any orthogonal net $\mathcal{N}_1$ on a surface $\Sigma$ bisects an infinite number of nets $\mathcal{N}_2$ which are diagonal to $\mathcal{N}_1$.  
\end{theorem}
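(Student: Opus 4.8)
The plan is to reduce the statement to an explicit local normal form for $\mathcal{N}_1$ and then exhibit an entire infinite family of diagonal nets by reparametrising the two families of $\mathcal{N}_1$ independently. First I would introduce curvilinear coordinates $(u,v)$ adapted to $\mathcal{N}_1$, so that the two families of $\mathcal{N}_1$ are the coordinate curves $u=\mathrm{const}$ and $v=\mathrm{const}$. Orthogonality of $\mathcal{N}_1$ then means precisely that the induced first fundamental form carries no mixed term,
\[ \mathrm{I} = E\,du^2 + G\,dv^2, \qquad \langle \br_u, \br_v\rangle = 0. \]
The point of this normal form is that the geometric hypothesis is encoded in the single condition $F=\langle\br_u,\br_v\rangle=0$, which is exactly what the bisection property will consume.

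Next, for any pair of smooth monotone functions $s_1=\phi(u)$, $s_2=\psi(v)$ — merely a relabelling of the same coordinate curves — I would define $\mathcal{N}_2=\mathcal{N}_2^{\phi,\psi}$ to be the net whose two families are $s_1+s_2=\mathrm{const}$ and $s_1-s_2=\mathrm{const}$. The diagonal property is then purely combinatorial: a ``quadrilateral'' of $\mathcal{N}_1$ has vertices $(\phi(u_i),\psi(v_j))$ with $i,j\in\{0,1\}$ in the $(s_1,s_2)$-plane, and a curve $s_1-s_2=\mathrm{const}$ through the opposite pair $(\phi(u_0),\psi(v_0))$, $(\phi(u_1),\psi(v_1))$ forces $\phi(u_0)-\psi(v_0)=\phi(u_1)-\psi(v_1)$, which rearranges to $\phi(u_0)+\psi(v_1)=\phi(u_1)+\psi(v_0)$; hence the remaining pair lies on a single curve $s_1+s_2=\mathrm{const}$ of $\mathcal{N}_2$ (and symmetrically if the given curve is of the other family). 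This verifies Definition \ref{mutual} for every admissible $(\phi,\psi)$.

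For the bisection claim I would compute the tangent directions of the two families of $\mathcal{N}_2$, namely $d_\pm=\partial_{s_1}\pm\partial_{s_2}=\tfrac{1}{\phi'}\br_u\pm\tfrac{1}{\psi'}\br_v$. Because $F=0$, these two vectors have equal length, $|d_+|^2=|d_-|^2=E/\phi'^2+G/\psi'^2$, so the internal and external bisectors of the pair $d_+,d_-$ are $\hat d_+ + \hat d_- \propto d_++d_-\propto\br_u$ and $\hat d_+ - \hat d_- \propto d_+-d_-\propto\br_v$. Thus the curves of $\mathcal{N}_1$ are exactly the angle bisectors of $\mathcal{N}_2$, i.e.\ $\mathcal{N}_1$ bisects $\mathcal{N}_2$. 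Finally, since distinct choices of $(\phi,\psi)$ produce families $\mathcal{N}_2^{\phi,\psi}$ meeting the $\mathcal{N}_1$-grid at different slopes $\psi'/\phi'$, these nets are genuinely distinct, yielding the asserted infinitude.

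The hard part is conceptual rather than computational: one must read Definition \ref{mutual} as a \emph{conditional} closure condition (not as the assertion that literal midpoint-diagonals belong to $\mathcal{N}_2$), and one must notice that it is precisely the orthogonality $F=0$ that makes $|d_+|=|d_-|$ and hence forces $\mathcal{N}_1$ to bisect $\mathcal{N}_2$ — whereas the freedom in $(\phi,\psi)$, which leaves both properties intact, is what produces infinitely many such $\mathcal{N}_2$. A minor point to watch is that the argument is local and that one must assume $\phi',\psi'\neq 0$ so that $\mathcal{N}_2$ is a genuine net transversal to $\mathcal{N}_1$.
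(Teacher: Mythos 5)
Your proof is correct and takes essentially the same route as the paper's: coordinates $(u,v)$ adapted to the orthogonal net $\mathcal{N}_1$, diagonal nets defined by $s_1\pm s_2=\mbox{const}$ after independent reparametrisation of the two coordinate families, and the key observation that $F=\br_u\cdot\br_v=0$ forces the two diagonal tangent vectors to have equal length, so that $\br_u$ and $\br_v$ are their angle bisectors (the paper's version of this is $\br_\alpha^2=\br_\beta^2$ together with $\br_u\cdot\br_\alpha=\br_u\cdot\br_\beta$, $\br_v\cdot\br_\alpha=-\br_v\cdot\br_\beta$). The only differences are cosmetic: you verify the diagonal property directly from Definition \ref{mutual}, where the paper cites \cite{B28,K76}, and you make the infinitude explicit via the freedom in $(\phi,\psi)$, which the paper leaves implicit in the non-uniqueness of the parametrisation.
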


\begin{proof}
It is known \cite{B28,K76} that for any pair of mutually diagonal nets $\mathcal{N}_1$ and $\mathcal{N}_2$ on a surface $\Sigma$, one may choose a parametrisation $\br(u,v)$ of $\Sigma$ such that the curves $u=\mbox{const}$, $v=\mbox{const}$ and $\alpha = u+v=\mbox{const}$, $\beta = u-v=\mbox{const}$ represent the curves of the nets $\mathcal{N}_1$ and $\mathcal{N}_2$ respectively. Conversely, if $\br(u,v)$ is a parametrisation of a surface $\Sigma$ such that the curves $u=\mbox{const}$ and $v=\mbox{const}$ represent a net $\mathcal{N}_1$ then the net $\mathcal{N}_2$ consisting of the curves $\alpha = u+v=\mbox{const}$ and $\beta = u-v=\mbox{const}$ is diagonal to $\mathcal{N}_1$. If the net $\mathcal{N}_1$ is orthogonal so that $\br_u\cdot\br_v=0$ then it is evident that
\bela{E14a}
  \br_\alpha^2 = \br_\beta^2,\quad \br_u\cdot\br_\alpha = \br_u\cdot\br_\beta,\quad \br_v\cdot\br_\alpha = -\br_v\cdot\br_\beta
\ela
and, hence, any pair of curves of the net $\mathcal{N}_1$ intersecting in a point bisect the two curves of the net $\mathcal{N}_2$ which likewise pass through that point.
\end{proof}

The following theorem represents the basis of the material presented in this section.

\begin{theorem}\label{diagonal}
On any one-sheeted hyperboloid $\mathcal{H}$, the lines of curvature and the (straight) asymptotic lines form mutually diagonal nets.  Moreover, if $P_1,P_3,P_{\bar{1}},P_{\bar{3}}$ are the vertices of a quadrilateral of asymptotic lines and the ``curved diagonals'' constitute lines of curvature as illustrated in Figure \ref{pic_patch} then $\overline{P_1P_3} + \overline{P_{\bar{1}}P_{\bar{3}}} = \overline{P_1P_{\bar{3}}} + \overline{P_{\bar{1}}P_3}$. 
\end{theorem}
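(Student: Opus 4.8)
Here is how I would approach the two assertions of Theorem~\ref{diagonal}.

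The plan is to place $\mathcal{H}$ inside its confocal system as the coordinate surface $u_2=\mathrm{const}$ and to work in the distinguished parametrisation $(s_1,s_3)$ of Theorem~\ref{elliptic}. For the first assertion I would read off the asymptotic and curvature nets from the conformally flat second fundamental form: since $\mathrm{II}_{13}\sim ds_1^2-ds_3^2$, the asymptotic directions are $ds_1=\pm ds_3$, so the two families of (straight) asymptotic lines are $s_1+s_3=\mathrm{const}$ and $s_1-s_3=\mathrm{const}$, whereas the coordinate lines $s_1=\mathrm{const}$, $s_3=\mathrm{const}$ are the lines of curvature. With $u=s_1$, $v=s_3$ the asymptotic net is exactly the net $u+v=\mathrm{const}$, $u-v=\mathrm{const}$ shown diagonal to the coordinate net in the proof of Theorem~\ref{bisection}; hence the curvature and asymptotic nets are mutually diagonal.

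The metric statement rests on one computation. Using $\br_{s_1}\cdot\br_{s_3}=0$ together with the explicit formulae \eqref{E14} (equivalently the classical confocal line element with $U_1=U_3=1/(a-c)$), I would verify the perfect-square identity
\[
  |\br_{s_1}|^2+|\br_{s_3}|^2=\frac{(u_3-u_1)^2}{a-c}.
\]
Along an asymptotic line one has $ds_3^2=ds_1^2$, so its arc-length element equals $(u_3-u_1)\,|ds_1|/\sqrt{a-c}$, and the decisive point is that this splits additively: introducing potentials $\phi,\chi$ with $\phi'=u_1/\sqrt{a-c}$ and $\chi'=u_3/\sqrt{a-c}$, the length of a segment of $s_1+s_3=\mathrm{const}$ is the increment of $\Psi_+:=\phi(s_1)+\chi(s_3)$ between its endpoints, and that of $s_1-s_3=\mathrm{const}$ the increment of $\Psi_-:=\chi(s_3)-\phi(s_1)$. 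Because $u_3>u_1$ throughout, $\Psi_+$ is strictly monotone along the first family and $\Psi_-$ along the second (their $s_1$-derivatives are $\mp(u_3-u_1)/\sqrt{a-c}$), which fixes the sign of every side increment.

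Finally I would feed in the hypothesis on the diagonals. A curved diagonal is a line of curvature precisely when its two endpoints share a coordinate, so $P_3,P_{\bar 3}$ lie on one line $s_1=\mathrm{const}$ and $P_1,P_{\bar 1}$ on one line $s_3=\mathrm{const}$; consequently $\phi$ agrees at $P_3$ and $P_{\bar 3}$, and $\chi$ agrees at $P_1$ and $P_{\bar 1}$. Writing each of the four sides as a signed increment of $\Psi_+$ or $\Psi_-$ (the signs supplied by the monotonicity above) and subtracting the two opposite-side sums, all terms cancel except $2\bigl(\phi(P_3)-\phi(P_{\bar 3})\bigr)-2\bigl(\chi(P_1)-\chi(P_{\bar 1})\bigr)$, which vanishes by the two coincidences just noted; this is the claimed equality $\overline{P_1P_3}+\overline{P_{\bar 1}P_{\bar 3}}=\overline{P_1P_{\bar 3}}+\overline{P_{\bar 1}P_3}$.

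The genuine work, and the step I would spend most care on, is the perfect-square identity for $|\br_{s_1}|^2+|\br_{s_3}|^2$: it is what turns the asymptotic length element into a sum of a function of $s_1$ and a function of $s_3$ and thereby makes the four sides telescope. Once that additive splitting and the monotonicity of $\Psi_\pm$ are in hand, the diagonal hypothesis enters only through the two coordinate coincidences and the cancellation is immediate; the remaining bookkeeping (orienting the boundary so that the signs of the increments are consistent) is then routine. I would note in passing that no separate convexity or sign assumption on $\chi$ is needed, even though $u_3$ itself changes sign on $\mathcal{H}$: it is the combinations $\Psi_\pm$, not $\phi$ and $\chi$ individually, that are monotone along the asymptotic lines.
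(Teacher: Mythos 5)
Your proof is correct, and while the first half coincides with the paper's argument, the second half takes a genuinely different route. For mutual diagonality you argue exactly as the paper does: in the privileged parametrisation $\mbox{II}_{13}\sim ds_1^2-ds_3^2$, so the asymptotic lines are $s_1\pm s_3=\mbox{const}$ and diagonality is inherited from the flat $(s_1,s_3)$-picture. For the length identity, however, the paper does not compute at all: it adds four auxiliary curvature lines through $P_1,P_3,P_{\bar 1},P_{\bar 3}$, notes that diagonality makes $P_{13},P,P_{\bar 1\bar 3}$ and $P_{\bar 1 3},P,P_{1\bar 3}$ collinear along asymptotic lines, and then applies Ivory's theorem twice — to the big curvature-line quadrilateral to get $\overline{P_{13}P_{\bar 1\bar 3}}=\overline{P_{\bar 1 3}P_{1\bar 3}}$, and to the four small ones to convert these diagonals into the sums $\overline{P_1P_3}+\overline{P_{\bar 1}P_{\bar 3}}$ and $\overline{P_1P_{\bar 3}}+\overline{P_{\bar 1}P_3}$. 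You instead integrate the asymptotic arc-length element, and your pivotal identity checks out: the Lam\'e coefficients of confocal coordinates give, on $u_2=\mbox{const}$,
\begin{equation}
  |\br_{s_1}|^2 = \frac{(u_2-u_1)(u_3-u_1)}{a-c},\qquad
  |\br_{s_3}|^2 = \frac{(u_3-u_1)(u_3-u_2)}{a-c},
\end{equation}
whose sum is indeed the perfect square $(u_3-u_1)^2/(a-c)$, so along $ds_3=\pm ds_1$ the speed is $(u_3-u_1)/\sqrt{a-c}$ and splits additively into your potentials $\phi,\chi$; the strict monotonicity of $\Psi_\pm$ follows from $u_3-u_1>b-c>0$, and the telescoping cancellation with the two coordinate coincidences is exactly as you state. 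One step you use tacitly and should make explicit: the quantities in the theorem are Euclidean (chord) distances, and they agree with the arc lengths you compute only because the asymptotic lines are the straight rulings of the hyperboloid. As for what each approach buys: the paper's Ivory argument is shorter, synthetic, and is the same engine that later yields the skew-parallelogram property in Theorem \ref{deformation}, whereas your Liouville-type splitting is self-contained (no appeal to Ivory), makes transparent that the identity is insensitive to the value of $u_2$ (hence compatible with the isometric deformation of Section 3), and produces explicit edge lengths as increments of $\phi\pm\chi$ — quantitative information the paper's proof does not provide and which is directly useful for the sampled AC-grids and their closing conditions.
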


\begin{figure}
  \centering
  \begin{tikzpicture}[line cap=line join=round,>=stealth,x=1.0cm,y=1.0cm, scale=1]
    \node[anchor=south west,inner sep=0] (image) at (0,0) {\includegraphics[scale=0.15]{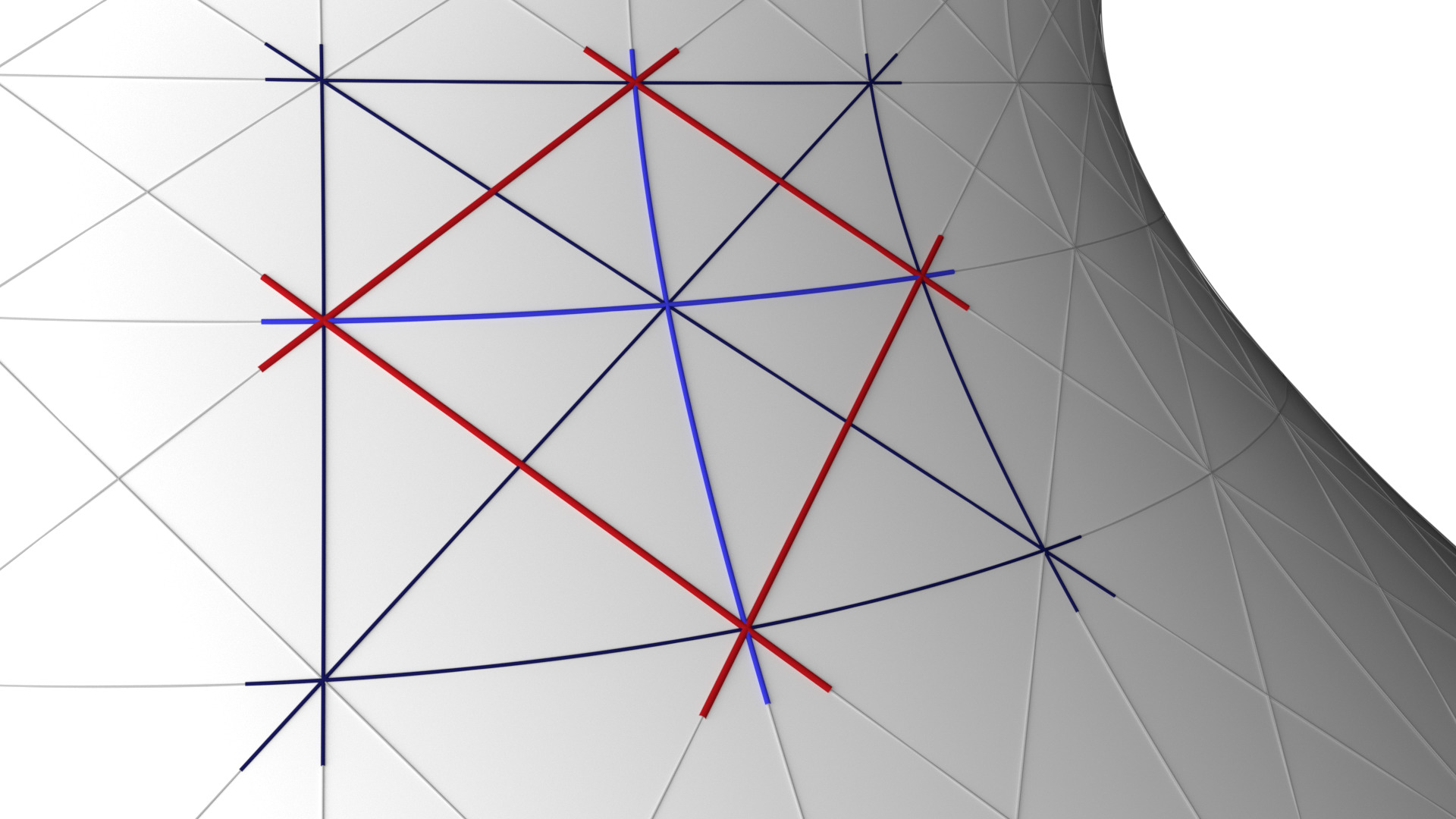}};
    
    \coordinate[label=180:\contour{white}{$P_1$}] (p1) at (2.30,3.53);
    \fill (p1) circle (2pt);
    \coordinate[label=0:\contour{white}{$P_{\bar{1}}$}] (p1b) at (6.53,3.85);
    \fill (p1b) circle (2pt);
    \coordinate[label=90:\contour{white}{$P$}] (p) at (4.74,3.65);
    \fill (p) circle (2pt);
    \coordinate[label=270:\contour{white}{$P_3$}] (p3) at (5.30,1.35);
    \fill (p3) circle (2pt);
    \coordinate[label=90:\contour{white}{$P_{\bar{3}}$}] (p3b) at (4.50,5.22);
    \fill (p3b) circle (2pt);
    \coordinate[label=225:\contour{white}{$P_{13}$}] (p13) at (2.32,0.98);
    \fill (p13) circle (2pt);
    \coordinate[label=-45:\contour{white}{$P_{\bar{1}3}$}] (p1b3) at (7.41,1.92);
    \fill (p1b3) circle (2pt);
    \coordinate[label=125:\contour{white}{$P_{1\bar{3}}$}] (p13b) at (2.29,5.24);
    \fill (p13b) circle (2pt);
    \coordinate[label=45:\contour{white}{$P_{\bar{1}\bar{3}}$}] (p1b3b) at (6.17,5.22);
    \fill (p1b3b) circle (2pt);
  \end{tikzpicture}
  \caption{A patch of mutually diagonal asymptotic and curvature lines on a one-sheeted hyperboloid.}
  \label{pic_patch}
\end{figure}

\begin{remark}\label{touch}
The above property of the sum of the lengths of opposite edges of a quadrilateral being the same for both pairs of edges implies that any such quadrilateral of asymptotic lines is associated with a one-parameter families of spheres which touch the edges. If the quadrilateral is planar then there exists a unique circle which is inscribed in the quadrilateral.
\end{remark}

\begin{proof} 
Diagonal nets of lines of curvature and asymptotic lines have been discussed in great detail in \cite{K76,K77}. In view of the application of the above theorem to systems of confocal quadrics, we here present an algebraic proof. Thus, we identify the hyperboloid $\mathcal{H}$ with a one-sheeted hyperboloid $\mathcal{H}_{u_2}$ of an associated system of confocal quadrics for a fixed value of the parameter $u_2$. Then, according to Theorem \ref{parametrisation}, the lines of curvature may be parametrised in such a manner that the second fundamental form of $\mathcal{H}_{u_2}$ is of the form
\bela{E15}
  \mbox{II}_{13} \sim ds_1^2 - ds_3^2.
\ela
Accordingly, the asymptotic lines on $\mathcal{H}_{u_2}$ are given by $s_1\pm s_3 = \mbox{const}$. Consequently, the pre-images of the lines of curvature and asymptotic lines on the $(s_1,s_3)$-plane trivially form mutually diagonal nets and, hence, curvature lines and asymptotic lines are likewise mutually diagonal. 

For the proof of the second part of the theorem, we introduce four additional lines of curvature which pass through the points  $P_1,P_3,P_{\bar{1}},P_{\bar{3}}$ and denote their points of intersection by  $P_{13,}P_{\bar{1}3},P_{\bar{1}\bar{3}},P_{1\bar{3}}$ (cf.\ Figure \ref{pic_patch}). Moreover, $P$ is the point of intersection of the original pair of lines of curvature.  Since the net of asymptotic lines is diagonal to the net of lines of curvature, the points $P_{13}, P, P_{\bar{1}\bar{3}}$ lie on an asymptotic line and so do the points $P_{\bar{1}3}, P, P_{1\bar{3}}$. Ivory's classical theorem \cite{IT} applied to the ``big'' quadrilateral of lines of curvature then states that $\overline{P_{13}P_{\bar{1}\bar{3}}} = \overline{P_{\bar{1}3}P_{1\bar{3}}}$. However, Ivory's theorem applied to the four ``small'' quadrilaterals of lines of curvature also implies that $\overline{P_{13}P_{\bar{1}\bar{3}}} = \overline{P_1P_3} + \overline{P_{\bar{1}}P_{\bar{3}}}$ and $\overline{P_{\bar{1}3}P_{1\bar{3}}} = \overline{P_1P_{\bar{3}}} + \overline{P_{\bar{1}}P_3}$ which completes the proof.
\end{proof}

The above theorem may now be exploited to determine the relationship between the lines of curvature and asymptotic lines on confocal one-sheeted hyperboloids. We begin by stating a theorem which is a direct consequence of the parametrisation \eqref{E3} of confocal quadrics.

\begin{theorem}
The members of the one-parameter family of one-sheeted hyperboloids of a system of confocal quadrics are related by affine (scaling) transformations. Specifically, the affine transformation $A_{u_2\mapsto\bar{u}_2}:\R^3\rightarrow\R^3$,
\bela{E16}
  A_{u_2\mapsto\bar{u}_2}(x,y,z) = \left(\sqrt{\frac{\bar{u}_2+a}{u_2+a}}\,x,\sqrt{\frac{\bar{u}_2+b}{u_2+b}}\,y,\sqrt{\frac{\bar{u}_2+c}{u_2+c}}\,z\right)
\ela
maps $\mathcal{H}_{u_2}$ to $\mathcal{H}_{\bar{u}_2}$.
\end{theorem}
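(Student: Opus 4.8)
The plan is to verify the concrete claim in \eqref{E16} by direct substitution, after first checking that $A_{u_2\mapsto\bar{u}_2}$ really is a real scaling transformation. Since $a>b>c$ and both $u_2$ and $\bar{u}_2$ lie in the one-sheeted range $(-b,-c)$ (see \eqref{E2}), the quantities $u_2+a,\,\bar{u}_2+a$ and $u_2+b,\,\bar{u}_2+b$ are all positive while $u_2+c,\,\bar{u}_2+c$ are both negative. Hence each of the three ratios $(\bar{u}_2+a)/(u_2+a)$, $(\bar{u}_2+b)/(u_2+b)$, $(\bar{u}_2+c)/(u_2+c)$ is positive, the square roots in \eqref{E16} are real, and $A_{u_2\mapsto\bar{u}_2}$ is a well-defined diagonal linear automorphism of $\R^3$ with strictly positive scaling factors.

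Next I would substitute. Let $(x,y,z)\in\mathcal{H}_{u_2}$ and put $(\bar{x},\bar{y},\bar{z})=A_{u_2\mapsto\bar{u}_2}(x,y,z)$. Squaring the defining relations gives $\bar{x}^2=\tfrac{\bar{u}_2+a}{u_2+a}\,x^2$, and likewise for $\bar{y},\bar{z}$, so that each individual term transforms as $\tfrac{\bar{x}^2}{\bar{u}_2+a}=\tfrac{x^2}{u_2+a}$, $\tfrac{\bar{y}^2}{\bar{u}_2+b}=\tfrac{y^2}{u_2+b}$ and $\tfrac{\bar{z}^2}{\bar{u}_2+c}=\tfrac{z^2}{u_2+c}$. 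Summing these three identities and using that $(x,y,z)$ satisfies the equation of $\mathcal{H}_{u_2}$ yields
\[
  \frac{\bar{x}^2}{\bar{u}_2+a}+\frac{\bar{y}^2}{\bar{u}_2+b}+\frac{\bar{z}^2}{\bar{u}_2+c}=\frac{x^2}{u_2+a}+\frac{y^2}{u_2+b}+\frac{z^2}{u_2+c}=1,
\]
so $(\bar{x},\bar{y},\bar{z})\in\mathcal{H}_{\bar{u}_2}$. To obtain a bijection rather than a mere inclusion, I would note that $A_{u_2\mapsto\bar{u}_2}$ is invertible with inverse $A_{\bar{u}_2\mapsto u_2}$, which maps $\mathcal{H}_{\bar{u}_2}$ back into $\mathcal{H}_{u_2}$ by the identical computation; hence $A_{u_2\mapsto\bar{u}_2}$ restricts to a bijection $\mathcal{H}_{u_2}\to\mathcal{H}_{\bar{u}_2}$, which proves both the general assertion and the explicit formula.

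Finally, to make transparent why this is ``a direct consequence of the parametrisation \eqref{E3}'', I would observe that the map is exactly the operation of varying $u_2\mapsto\bar{u}_2$ while holding the two curvature-line coordinates $u_1,u_3$ fixed: in \eqref{E3} the factor $(u_2+a)$ enters $x^2$ multiplicatively and the remaining factors depend only on $u_1,u_3$, so $\bar{x}/x=\sqrt{(\bar{u}_2+a)/(u_2+a)}$ exactly (with matching sign), and similarly for $y,z$. This identifies $A_{u_2\mapsto\bar{u}_2}$ with the point correspondence $\br(u_1,u_2,u_3)\mapsto\br(u_1,\bar{u}_2,u_3)$ used in the sequel.

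There is no serious obstacle here; the computation is routine. The only point requiring care is the reality and sign of the scaling factors: one must invoke the ordering $a>b>c$ together with $u_2,\bar{u}_2\in(-b,-c)$ to guarantee that all three ratios under the square roots are positive, so that the positive roots preserve the sign of each coordinate (and hence the octant), consistently with the square-root ambiguity inherent in \eqref{E3}.
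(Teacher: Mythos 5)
Your proposal is correct and matches the paper, which offers no separate proof but simply notes the theorem is ``a direct consequence of the parametrisation \eqref{E3}'' --- precisely the identification $A_{u_2\mapsto\bar{u}_2}(\br(u_1,u_2,u_3))=\br(u_1,\bar{u}_2,u_3)$ you make in your final paragraph (and which the paper itself invokes at the start of the proof of Theorem \ref{deformation}). Your additional checks --- positivity of the three ratios from $u_2,\bar{u}_2\in(-b,-c)$ with $a>b>c$, and bijectivity via the inverse $A_{\bar{u}_2\mapsto u_2}$ --- are sound and merely spell out what the paper leaves implicit.
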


The above affine transformation gives rise to the following geometric properties.

\begin{theorem}\label{deformation}
The one-parameter family of one-sheeted hyperboloids of any system of confocal quadrics is generated by the deformation of any constituent hyperboloid
$\mathcal{H}_{u_2}$ which is governed by the affine transformation $A_{u_2\mapsto\bar{u}_2}$, where $\bar{u}_2-u_2$ is interpreted as the deformation parameter. This deformation enjoys the following properties.
\begin{itemize}
\item Lines of curvature and asymptotic lines and their mutually diagonal relationship are preserved.
\item The deformation is isometric along asymptotic lines, that is, the distance between any two points $P_{u_2}$ and $P^*_{u_2}$ on any asymptotic line is preserved.
\item Any two points $P_{u_2}$ and $P^*_{u_2}$ on any asymptotic line and their deformed counterparts $P_{\bar{u}_2}$ and $P^*_{\bar{u}_2}$ constitute the vertices of a skew parallelogram, that is, in addition to $\overline{P_{u_2}P^*_{u_2}}=\overline{P_{\bar{u}_2}P^*_{\bar{u}_2}}$, the equality $\overline{P_{u_2}P^*_{\bar{u}_2}}=\overline{P^*_{u_2}P_{\bar{u}_2}}$ holds.
\item In the two limiting cases $u_2=-c$ and $u_2=-b$, the mutually diagonal nets of lines of curvature and asymptotic lines become planar with the straight lines touching the focal conics 
\bela{E17}
\begin{aligned}
  \frac{x^2}{a-c} + \frac{y^2}{b-c} & = 1,\quad & z & = 0\\
  \frac{x^2}{a-b} - \frac{z^2}{b-c} & = 1,\quad & y & = 0
 \end{aligned} 
\ela
respectively and the lines of curvature becoming conics which are confocal to the respective focal conic. The two planar pairs of nets of lines and confocal conics are isometric along their lines in the above sense. 
\end{itemize}
\end{theorem}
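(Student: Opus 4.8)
The plan is to reduce every assertion to the explicit elliptic parametrisation of Theorem~\ref{elliptic}, whose decisive feature is that the affine transformation $A_{u_2\mapsto\bar u_2}$ of \eqref{E16} acts on the parameters $(s_1,s_2,s_3)$ simply as $(s_1,s_2,s_3)\mapsto(s_1,\bar s_2,s_3)$: it fixes the curvature-line coordinates $s_1,s_3$ on the hyperboloid and merely relabels the sheet $s_2$. I would verify this first. In \eqref{E14} each Cartesian coordinate factorises as a product of a function of $s_1$, a function of $s_2$ and a function of $s_3$, and by \eqref{E11}--\eqref{E12} the $s_2$-factors of $x,y,z$ are proportional to $\sqrt{u_2+a}$, $\sqrt{u_2+b}$, $\sqrt{-(u_2+c)}$; scaling these by the three factors in \eqref{E16} is precisely the substitution $u_2\mapsto\bar u_2$ with $s_1,s_3$ untouched, consistently across all three coordinates. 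Since on each $\mathcal H_{u_2}$ the curves $s_1=\mathrm{const}$, $s_3=\mathrm{const}$ are lines of curvature and, by \eqref{E15}, the curves $s_1\pm s_3=\mathrm{const}$ are the straight asymptotic lines, and since an affine map carries straight lines to straight lines, the first bullet (preservation of both nets, and of their mutually diagonal relationship, which depends only on the $(s_1,s_3)$-combinatorics) is immediate.

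For the isometry along asymptotic lines I would compute the induced metric. Using the standard confocal metric coefficients together with $U_1=U_3=1/(a-c)$ from Theorem~\ref{elliptic}, one obtains $\br_{s_1}^2=(u_1-u_2)(u_1-u_3)/(a-c)$ and $\br_{s_3}^2=(u_3-u_1)(u_3-u_2)/(a-c)$, whence $\br_{s_1}^2+\br_{s_3}^2=(u_1-u_3)^2/(a-c)$. Because $s_1,s_3$ are orthogonal the cross term vanishes, so along an asymptotic line $s_1\pm s_3=\mathrm{const}$ the arc-length density is $\tfrac12|u_1-u_3|/\sqrt{a-c}$, which depends only on $s_1,s_3$ and is manifestly independent of $u_2$. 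Hence $A_{u_2\mapsto\bar u_2}$, preserving $(s_1,s_3)$, preserves arc length along asymptotic lines; as these lines are straight this arc length is the Euclidean distance, giving the second bullet and the side equality $\overline{P_{u_2}P^*_{u_2}}=\overline{P_{\bar u_2}P^*_{\bar u_2}}$ of the third.

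The remaining diagonal equality $\overline{P_{u_2}P^*_{\bar u_2}}=\overline{P^*_{u_2}P_{\bar u_2}}$ is an Ivory-type identity that I would prove directly from the product structure. Writing each coordinate as $X_1(s_1)X_2(s_2)X_3(s_3)$ and expanding the two squared distances, the mixed terms coincide because $X_1X_1^*X_2\bar X_2X_3X_3^*$ is invariant under the relevant swaps, so the difference collapses to $\sum_{\mathrm{coords}}(X_2^2-\bar X_2^2)\big[(X_1X_3)^2-(X_1^*X_3^*)^2\big]$. Using \eqref{E3} and \eqref{E11} this equals $(u_2-\bar u_2)$ times $S(s_1,s_3)-S(s_1^*,s_3^*)$, where $S=(u_1{+}a)(u_3{+}a)/[(a{-}b)(a{-}c)]+(u_1{+}b)(u_3{+}b)/[(b{-}a)(b{-}c)]+(u_1{+}c)(u_3{+}c)/[(c{-}a)(c{-}b)]$; by Lagrange interpolation $S$ is the leading coefficient of the monic quadratic $\xi\mapsto(u_1+\xi)(u_3+\xi)$, namely $S\equiv1$, so the difference vanishes identically. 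Combined with the second bullet this yields the skew parallelogram.

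Finally, for the limiting cases I would set $s_2=0$ (so $u_2=-c$ and $z\equiv0$) and $s_2=K(k_2)$ (so $u_2=-b$ and $y\equiv0$) in \eqref{E14}; using $\jac{sn}(0)=0$, $\jac{cn}(0)=\jac{dn}(0)=1$ in the first case and $\jac{cn}(K)=0$, $\jac{sn}(K)=1$, $\jac{dn}(K)=k_1$ in the second, each reduces to the planar canonical parametrisation \eqref{I2} of a confocal family outside a focal conic \eqref{E17}. For that parametrisation it is already known \cite{BSST17} that $s_1\pm s_3=\mathrm{const}$ are straight lines tangent to the focal conic while $s_1,s_3=\mathrm{const}$ are confocal conics, and the asserted isometry along the lines persists by continuity, since the arc-length density $\tfrac12|u_1-u_3|/\sqrt{a-c}$ found above is independent of $u_2$. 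I expect the main obstacle to be the third bullet: organising the product-structure expansion and the sign bookkeeping of the $s_2$-factors across the three coordinates (the minus signs attached to $g_1,h_1,h_2$) so that the Ivory identity $S\equiv1$ emerges cleanly.
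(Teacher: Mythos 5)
Your proposal is correct, and on two of the four items it takes a genuinely different route from the paper's own proof. Items one and four follow the paper essentially verbatim: the paper likewise observes that $A_{u_2\mapsto\bar u_2}(\br(u_1,u_2,u_3))=\br(u_1,\bar u_2,u_3)$, so the privileged $(s_1,s_3)$-parametrisation is untouched, and it obtains the planar limits from $u_2=-c$ and $u_2=-b$ (though it derives tangency to the focal conics geometrically, from the fact that the asymptotic lines lie in the tangent planes of $\mathcal{H}_{u_2}$ along the ellipse \eqref{E18}, where you instead quote \cite{BSST17} after reducing \eqref{E14} to \eqref{I2}; both are acceptable). For the isometry along asymptotic lines the paper argues kinematically: by orthogonality of the confocal system each point moves perpendicular to the tangent plane as $u_2$ varies, so for two points on a straight generator $(P-P^*)\cdot(\dot P-\dot P^*)=0$ and the generators undergo rigid motions; you instead compute the induced metric, and your coefficients are right ($\br_{s_i}^2=U_i(u_i-u_j)(u_i-u_k)$ from the classical Lam\'e coefficients, giving $\br_{s_1}^2+\br_{s_3}^2=(u_1-u_3)^2/(a-c)$, manifestly independent of $u_2$), which is equally valid, more explicit, and has the side benefit of justifying the isometry claim in the planar limits by continuity, a point the paper leaves implicit. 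The real divergence is the skew-parallelogram identity: the paper simply invokes Ivory's theorem, applied to a curved hexahedron of curvature lines joining $\mathcal{H}_{u_2}$ and $\mathcal{H}_{\bar u_2}$, whereas you prove the identity from scratch out of the product structure \eqref{E9}: after cancellation of the mixed terms the difference of squared diagonals collapses to $(u_2-\bar u_2)\left[S(u_1,u_3)-S(u_1^*,u_3^*)\right]$, and $S\equiv1$ because it is the leading coefficient of the monic quadratic $\xi\mapsto(u_1+\xi)(u_3+\xi)$ recovered by Lagrange interpolation at the nodes $a,b,c$. The sign bookkeeping you flag as the main obstacle does work out cleanly: the denominators $(b-a)(b-c)$ and $(c-a)(c-b)$ in \eqref{E3} exactly absorb the minus signs in $g_1^2=-(u_1+b)$, $h_1^2=-(u_1+c)$ and $h_2^2=-(u_2+c)$. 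What your route buys is a fully self-contained argument --- in effect a proof of Ivory's theorem in this setting, valid for arbitrary pairs of points, not only pairs on a common asymptotic line; what the paper's route buys is brevity and a geometric picture (the hexahedron of Figure \ref{pic_hexahedron}) that it reuses elsewhere.
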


\begin{proof}
It is evident that the point transformation $A_{u_2\mapsto\bar{u}_2}$ preserves the coordinate lines $u_1=\mbox{const}$ and $u_3=\mbox{const}$ since, by construction,
$A_{u_2\mapsto\bar{u}_2}(\br(u_1,u_2,u_3)) = \br(u_1,\bar{u}_2,u_3)$. The trivial fact that the asymptotic lines are preserved since straight lines are preserved by affine transformations is also reflected in the fact that the special parametrisation of the lines of curvature in which the second fundamental form is conformally flat (cf.\ proof of Theorem \ref{diagonal}) is independent of $u_2$. Thus, the mutually diagonal nets of lines of curvature and asymptotic lines are preserved.

Since confocal coordinate systems are orthogonal, at any instant of the deformation, any point on a one-sheeted hyperboloid moves in the direction perpendicular to the corresponding tangent plane. Accordingly, asymptotic lines undergo rigid motions during the deformation so that the deformation is isometric along asymptotic lines. Specifically, if $P_{u_2}$ and $P^*_{u_2}$ are two points on an asymptotic line and if $P_{\bar{u}_2}$ and $P^*_{\bar{u}_2}$ denote the corresponding points on the deformed asymptotic line then $\overline{P_{u_2}P^*_{u_2}}=\overline{P_{\bar{u}_2}P^*_{\bar{u}_2}}$. These four points may be regarded as four vertices of a ``curved'' hexahedron, the ``edges'' of which are segments of lines of curvature on the corresponding confocal quadrics (including the two hyperboloids) as indicated in Figure \ref{pic_hexahedron}. According to Ivory's theorem \cite{IT}, the long diagonals of this hexahedron are congruent so that, in particular, $\overline{P_{u_2}P^*_{\bar{u}_2}}=\overline{P^*_{u_2}P_{\bar{u}_2}}$.

\begin{figure}
  \centering
  \begin{tikzpicture}[line cap=line join=round,>=stealth,x=1.0cm,y=1.0cm, scale=1]
    \node[anchor=south west,inner sep=0] (image) at (0,0) {\includegraphics[scale=0.16]{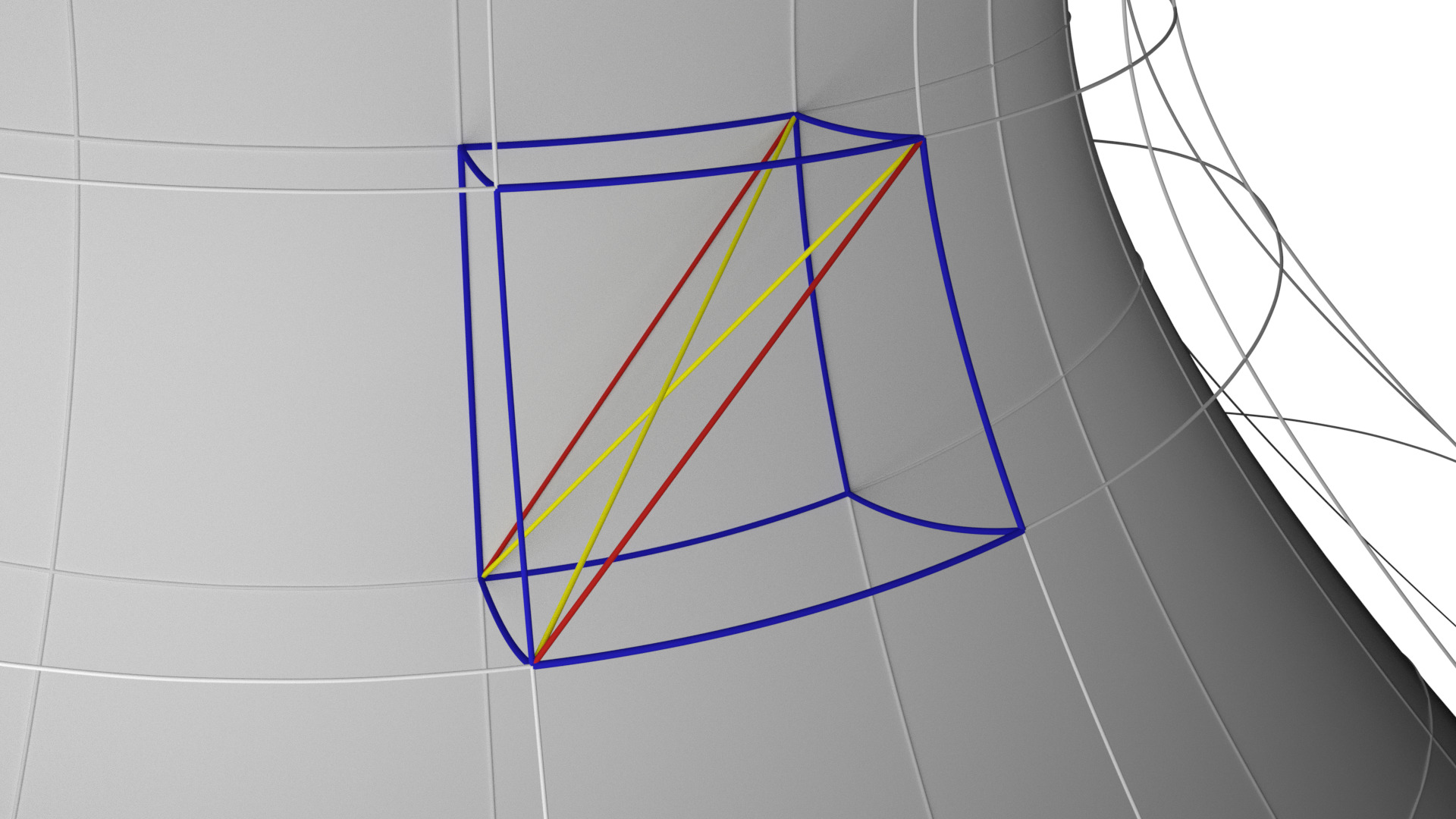}};
    
    \coordinate[label=180:\contour{white}{$P_{u_2}$}] (p2) at (3.65,1.82);
    \fill (p2) circle (2pt);
    \coordinate[label=225:\contour{white}{$P_{\bar{u}_2}$}] (p2b) at (4.03,1.18);
    \fill (p2b) circle (2pt);
    \coordinate[label=45:\contour{white}{$P_{u_2}^*$}] (p2s) at (6.01,5.31);
    \fill (p2s) circle (2pt);
    \coordinate[label=0:\contour{white}{$P_{\bar{u}_2}^*$}] (p2bs) at (6.96,5.14);
    \fill (p2bs) circle (2pt);
  \end{tikzpicture}
  
  \centering
  
  \caption{A curved hexahedron
    made of lines of curvature linking two confocal one-sheeted hyperboloids.
    The points $P_{u_2}, P_{u_2}^*, P_{\bar{u}_2}, P_{\bar{u}_2}^*$ constitute the vertices of a skew parallelogram.}
  \label{pic_hexahedron}
\end{figure}

The asymptotic lines on the hyperboloid $\mathcal{H}_{u_2}$ are ``tangent'' to the ellipse
\bela{E18}
  \frac{x^2}{u_2 + a} + \frac{y^2}{u_2 + b} = 1,\quad  z  = 0
\ela
in the sense that they lie in the tangent planes of the hyperboloid at the points of this ellipse. Hence, in the limit $u_2=-c$, these lines become tangent to the ellipse \eqref{E17}$_{1,2}$ which is one of the focal conics \cite{HCV52} associated with the underlying system of confocal quadrics. By virtue of \eqref{E3}, the coordinate system on the $(x,y)$-plane simplifies to
\bela{E19}
  x^2 = \frac{(u_1+a)(u_3+a)}{a-b},\quad y^2 = \frac{(u_1+b)(u_3+b)}{b-a}
\ela
which is the standard confocal coordinate system on the plane. Accordingly, the $u_1$- and $u_3$-lines constitute conics which are confocal to the ellipse \eqref{E17}$_{1,2}$. The same reasoning may now be applied to the limiting case $u_2=-b$ being associated with the focal hyperbola \eqref{E17}$_{3,4}$.
\end{proof}

The theory established in the preceding may now be applied to appropriate samplings of lines of curvature and asymptotic lines on one-sheeted hyperboloids to which we will refer as  {\em grids}. The existence of these grids is geometrically guaranteed by virtue of the mutual diagonality of the nets of lines of curvature and asymptotic lines.

\begin{definition}
A configuration of the combinatorics of a $\Z^2$-grid of two sequences $\left(\ell_n\right)_{n\in\Z}$ and $\left(m_{n'}\right)_{n'\in \Z}$ of the two one-parameter families of asymptotic lines on a one-sheeted hyperboloid is termed an {\em AC-grid} if pairs of opposite vertices of the elementary quadrilaterals formed by the lines $\ell_n,\ell_{n+1}$ and $m_{n'},m_{n'+1}$ are connected by lines of curvature.
\end{definition}

In this connection, Theorem \ref{diagonal} together with Remark \ref{touch} reveals the significance of the incircular (IC) nets introduced in \cite{AB}.

\begin{definition}
A planar configuration of straight lines of the combinatorics of a $\Z^2$-grid is termed an {\em  IC-net} if the elementary quadrilaterals formed by the lines circumscribe circles.
\end{definition}

\begin{remark}
IC-nets have the remarkable characteristic property that their straight lines are tangent to a conic and opposite pairs of vertices of elementary quadrilaterals are connected by conics which are confocal to the conic of tangency. Examples of IC-nets being ``bounded'' by an ellipse ({\em IC$_e$-net}) and a hyperbola ({\em IC$_h$-net}) respectively are displayed in Figure \ref{pic_icnets}.
\end{remark}

\begin{figure}
  \centering
  \includegraphics[scale=0.17]{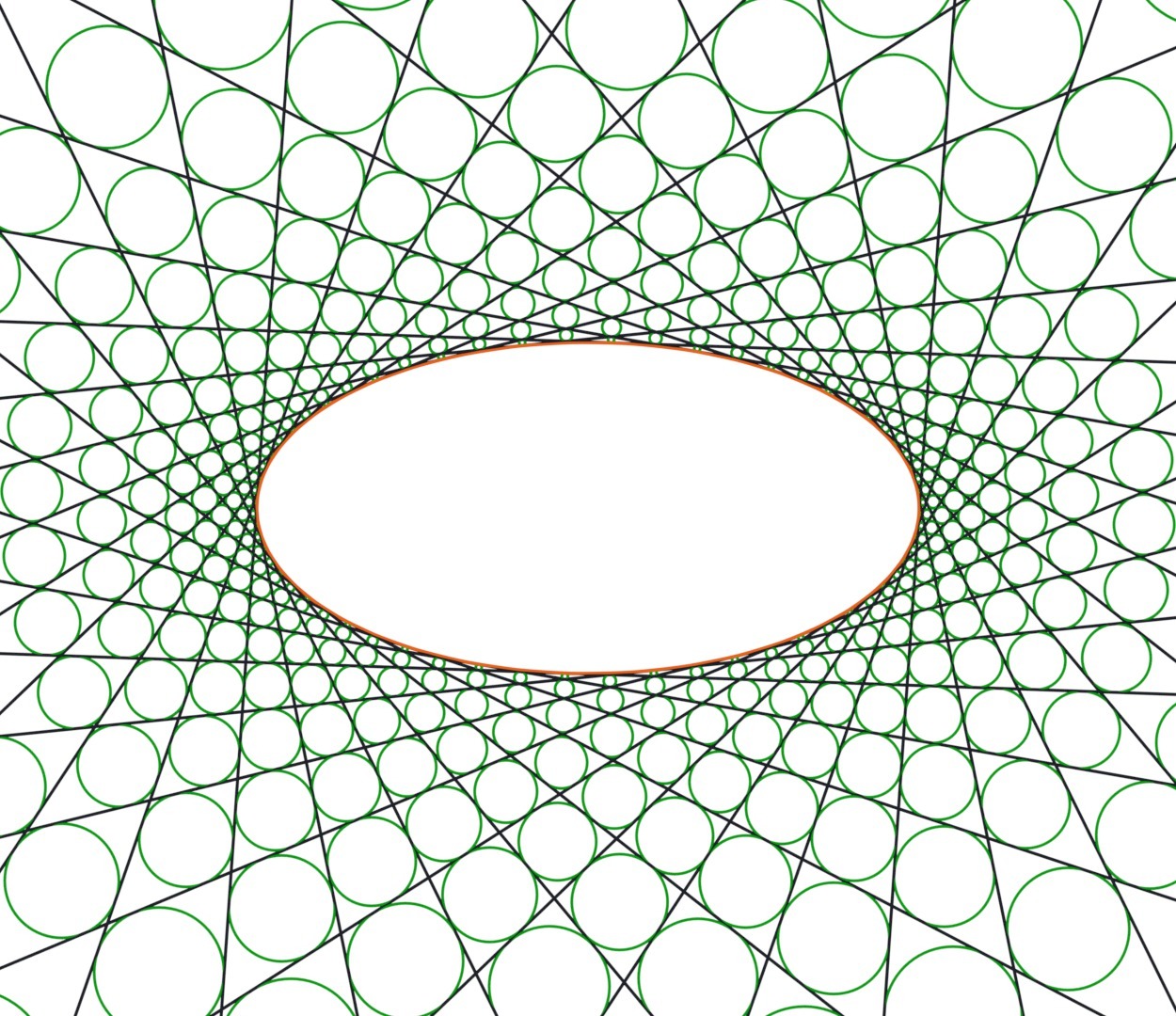}
  \includegraphics[scale=0.17]{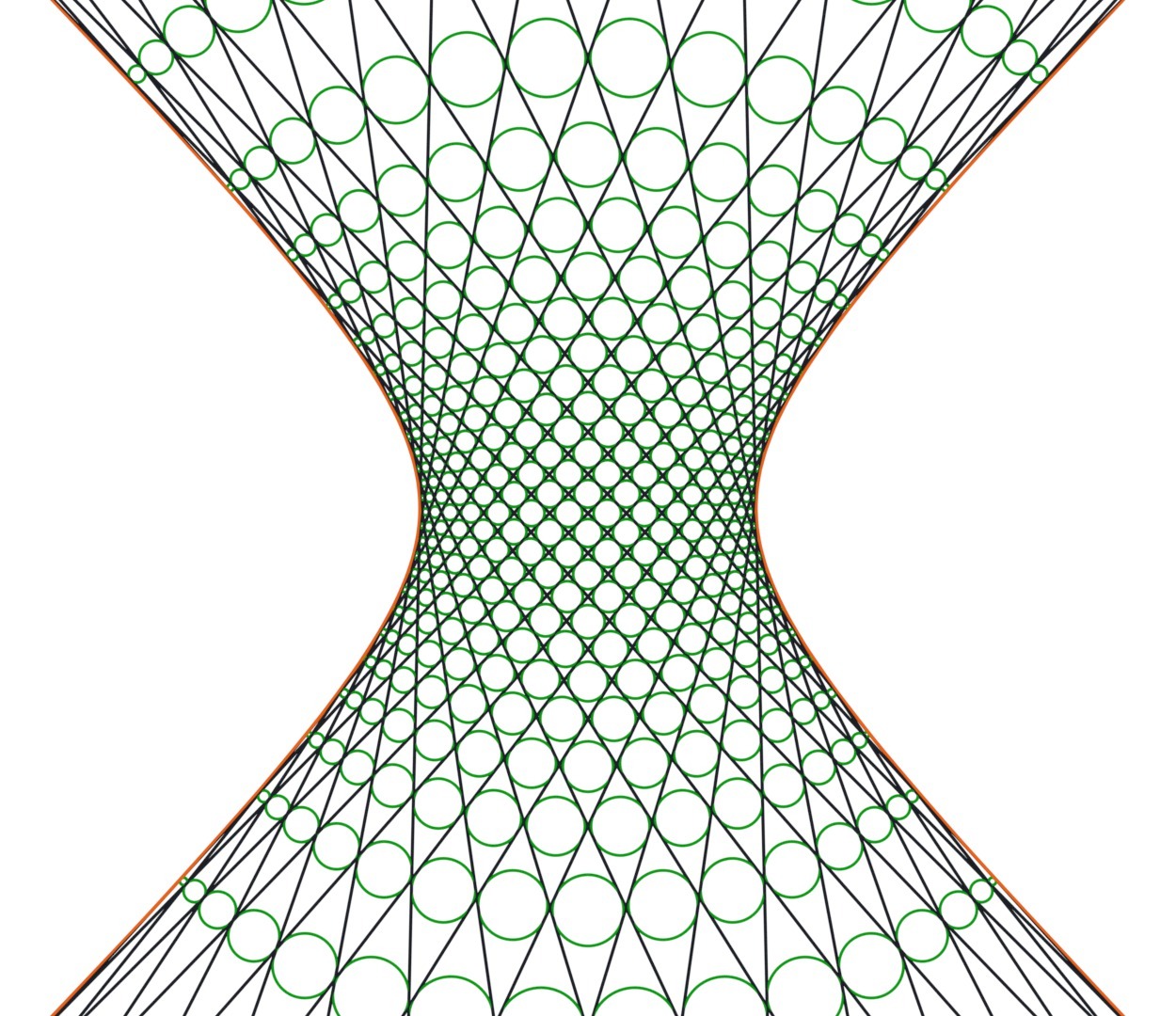}
\caption{An \ICe (left) and an \ICh (right).}
\label{pic_icnets}
\end{figure}

We conclude that AC-grids are non-planar generalisations of IC-nets as summarised below.

\begin{corollary}
The deformation linking the family of one-sheeted hyperboloids of a system of confocal quadrics as detailed in Theorem \ref{deformation} acts on associated AC-grids as follows. 
\begin{itemize}
\item AC-grids are preserved by the deformation.
\item During the deformation, the elementary quadrilaterals are folded but the length of the edges is preserved.
\item AC-grids become an \ICe and an \ICh in the planar limiting cases $u_2=-c$ and $u_2=-b$ respectively.
\item Any \ICe (or \ICh) may be transformed into an \ICh (or an \ICe) by means of a spatial deformation into AC-grids which preserves lines and is isometric along the lines.
\item AC-grids and their deformation are algebraically represented by the privileged parametrisation \eqref{E14} with the vertices of the grids being parametrised by $(s_1,s_3) = \delta(n_1+n_3,n_1-n_3) + (s_1^0,s_3^0)$, $n_1,n_3\in\Z$, where $\delta$ and $s_1^0,s_3^0$ are arbitrary constants.
\end{itemize}
\end{corollary}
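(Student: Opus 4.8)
The plan is to obtain each item as the discrete specialisation of Theorem \ref{deformation}, using the incircle criterion of Remark \ref{touch} for the planar limits and the explicit parametrisation \eqref{E14} for the algebraic description. Since an AC-grid is defined purely combinatorially---as a $\Z^2$-sampling of the two families of asymptotic lines whose elementary quadrilaterals have both pairs of opposite vertices joined by lines of curvature---the first item is immediate: the affine map $A_{u_2\mapsto\bar{u}_2}$ carries asymptotic lines to asymptotic lines (it preserves straight lines and, by Theorem \ref{deformation}, sends $\mathcal{H}_{u_2}$ to $\mathcal{H}_{\bar{u}_2}$) and lines of curvature to lines of curvature, preserving their diagonal incidence, so the sampled net of lines together with its diagonal connections is mapped onto an AC-grid on $\mathcal{H}_{\bar{u}_2}$. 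For the second item I would invoke the isometry-along-asymptotic-lines property: the edges of each elementary quadrilateral are segments of asymptotic lines and hence retain their length, while the skew-parallelogram statement of Theorem \ref{deformation} shows that the quadrilaterals merely fold, their vertices travelling perpendicular to the successive tangent planes with the four edge lengths fixed.

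The third item rests on the final assertion of Theorem \ref{deformation}: in the limits $u_2=-c$ and $u_2=-b$ the configuration becomes planar, with the asymptotic lines tangent to the focal ellipse and the focal hyperbola \eqref{E17} respectively. In either limit the elementary quadrilaterals are honest planar quadrilaterals of straight lines, and Theorem \ref{diagonal} furnishes the equal-opposite-edge-sum relation for them; Remark \ref{touch} then produces an inscribed circle in each, so the limiting grid is an IC-net, of type \ICe or \ICh according to the bounding focal conic. The fourth item is the concatenation of the first three: beginning with an \ICe at $u_2=-c$, deforming through the intervening AC-grids, and ending at the \ICh at $u_2=-b$ yields a line-preserving transformation, isometric along the lines, that interpolates between the two planar IC-nets.

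For the algebraic representation I would recall from the proof of Theorem \ref{diagonal} that, in the parametrisation \eqref{E14}, the asymptotic lines on $\mathcal{H}_{u_2}$ are precisely the level sets $s_1+s_3=\mbox{const}$ and $s_1-s_3=\mbox{const}$, whereas the lines of curvature are $s_1=\mbox{const}$ and $s_3=\mbox{const}$. Choosing the two families to be the arithmetic progressions $s_1+s_3=2\delta n_1+\mbox{const}$ and $s_1-s_3=2\delta n_3+\mbox{const}$, $n_1,n_3\in\Z$, and solving for their intersection points gives $(s_1,s_3)=\delta(n_1+n_3,n_1-n_3)+(s_1^0,s_3^0)$; a one-line check shows that opposite vertices of each elementary quadrilateral share a common value of $s_1$ or of $s_3$ and therefore lie on a line of curvature, confirming that this uniform lattice is exactly an AC-grid.

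The principal point requiring care is the limiting step underlying the third item: one must ensure that the combinatorial AC-structure and the equal-sum relation persist in the degeneration $u_2\to-c$ and $u_2\to-b$, so that the inscribed-circle conclusion of Remark \ref{touch} is genuinely available in the plane and the bounding conic is correctly identified as the focal ellipse or hyperbola. Everything else is a direct transcription of the deformation theorem and the mutual-diagonality result already established, so no new geometric input is needed.
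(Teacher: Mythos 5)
Your treatment of the first, second, third and fifth bullets is sound and is essentially the paper's view of them: these follow directly from Theorems \ref{deformation} and \ref{diagonal}, Remark \ref{touch}, and the parametrisation \eqref{E14}, and your lattice computation (opposite vertices of each elementary quadrilateral sharing a value of $s_1$ or of $s_3$) is exactly right. The genuine gap is in the fourth bullet, and it is precisely the one point to which the paper devotes its entire proof. You derive that bullet as ``the concatenation of the first three'', but the concatenation only establishes the claim for those {\ICe}s which are already known to arise as the planar limit $u_2=-c$ of some AC-grid. The statement asserts it for \emph{any} \ICe (or \ICh), so one must first prove the converse lifting step: every IC-net can be realised as a planar degeneration of an AC-grid. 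Your proposal nowhere supplies this, and your closing paragraph misidentifies the delicate point as the limiting step in the third bullet; the paper opens its proof with exactly the missing assertion (``it is required to show that any IC-net may be interpreted as a planar deformation of an AC-grid'').

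To close the gap one invokes the characterisation of IC-nets from \cite{AB} recalled in the paper's remark: the lines of an IC-net are tangent to a conic, and opposite vertices of its elementary quadrilaterals are connected by conics confocal with the conic of tangency. After a normalisation one may assume the conic of tangency of a given \ICe is the ellipse \eqref{E17}$_{1,2}$, which is the focal ellipse of a \emph{unique} system of confocal quadrics. By Theorem \ref{deformation}, in the limit $u_2=-c$ the asymptotic lines on the one-parameter family of one-sheeted hyperboloids become exactly the tangent lines of this ellipse, while the lines of curvature become the confocal conics; hence the given IC-net together with its diagonal conics is a sampling of these planar mutually diagonal nets, and the corresponding sampling of asymptotic and curvature lines on the hyperboloids constitutes the required family of AC-grids. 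With this lifting in hand, your concatenation argument then does yield the fourth bullet. (Your stated worry about the third bullet is comparatively harmless: the Pitot relation persists trivially in the limit because, by the isometry along asymptotic lines, the edge lengths do not change at all under $A_{u_2\mapsto\bar{u}_2}$.)
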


\begin{proof}
For completeness, it is required to show that any IC-net may be interpreted as a planar deformation of an AC-grid. For instance, in the case of an \ICe, we may assume without loss of generality that the ellipse to which its lines are tangent is given by \eqref{E17}$_{1,2}$. This ellipse then constitutes the focal ellipse of a unique system of confocal quadrics. As stated in Theorem \ref{deformation}, in the limit $u_2=-c$, the asymptotic lines on the corresponding one-parameter family of one-sheeted hyperboloids become lines which touch the ellipse \eqref{E17}$_{1,2}$, while the lines of curvature become confocal conics. Accordingly, the lines of the IC-net and the associated diagonal confocal conics may be regarded as samplings of these planar mutually diagonal nets of tangent lines and confocal conics so that the corresponding samplings of the asymptotic nets and lines of curvature on the one-sheeted hyperboloids constitute the required AC-grids.
\end{proof}

\begin{remark}
The interpretation of a one-parameter family of one-sheeted confocal hyperboloids in terms of the deformation of a single hyperboloid which is isometric along its generators may be found in \cite{HCV52}. In fact, therein, it is pointed out that one may construct a deformable model of a one-sheeted hyperboloid by realising arbitrary samplings of generators as a collection of ``rods'' which are fastened at the points of intersection (see Figure \ref{asymptotic_wien}). 
However, it appears that the properties of AC-grids which constitute privileged samplings and their connection with IC-nets is not considered. In Figure \ref{pic_deformhyp}, various stages in the deformation of an AC-grid are displayed. Figure \ref{pic_deformhyp} (top-left) and Figure \ref{pic_deformhyp} (bottom-right) constitute an \ICe and an \ICh respectively. Here, the constant $\delta$ has been chosen in such a manner that the AC-grid ``closes'', that is, $\delta =4\mathsf{K}(k)/N$, $N\in\N$, where $\mathsf{K}(k)$ is the complete integral of the first kind and the real quarter-period of the Jacobi elliptic functions \cite{NIST}. 
\end{remark}

\begin{figure}
  \centering
  \includegraphics[scale=0.294]{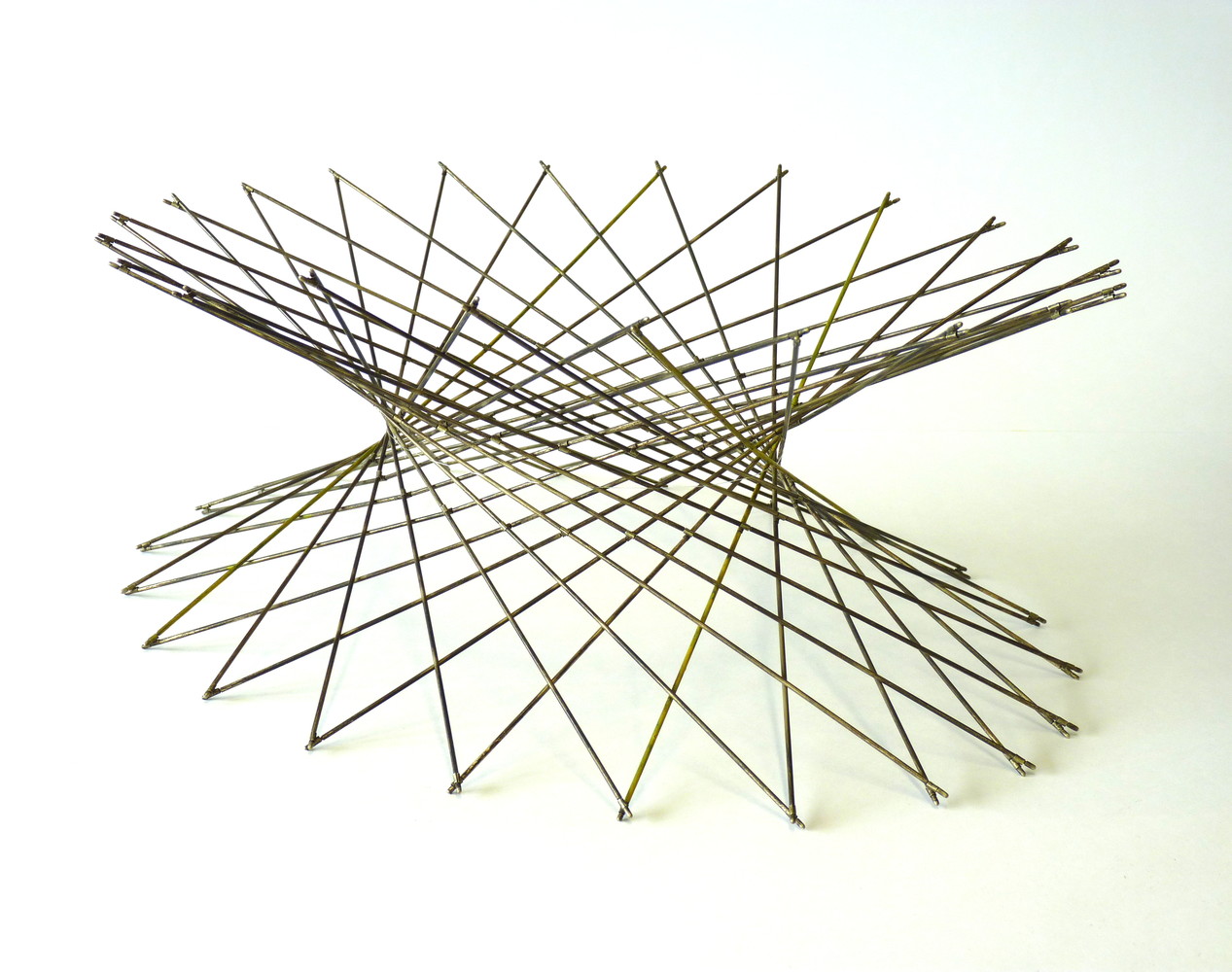}
  \includegraphics[scale=0.294]{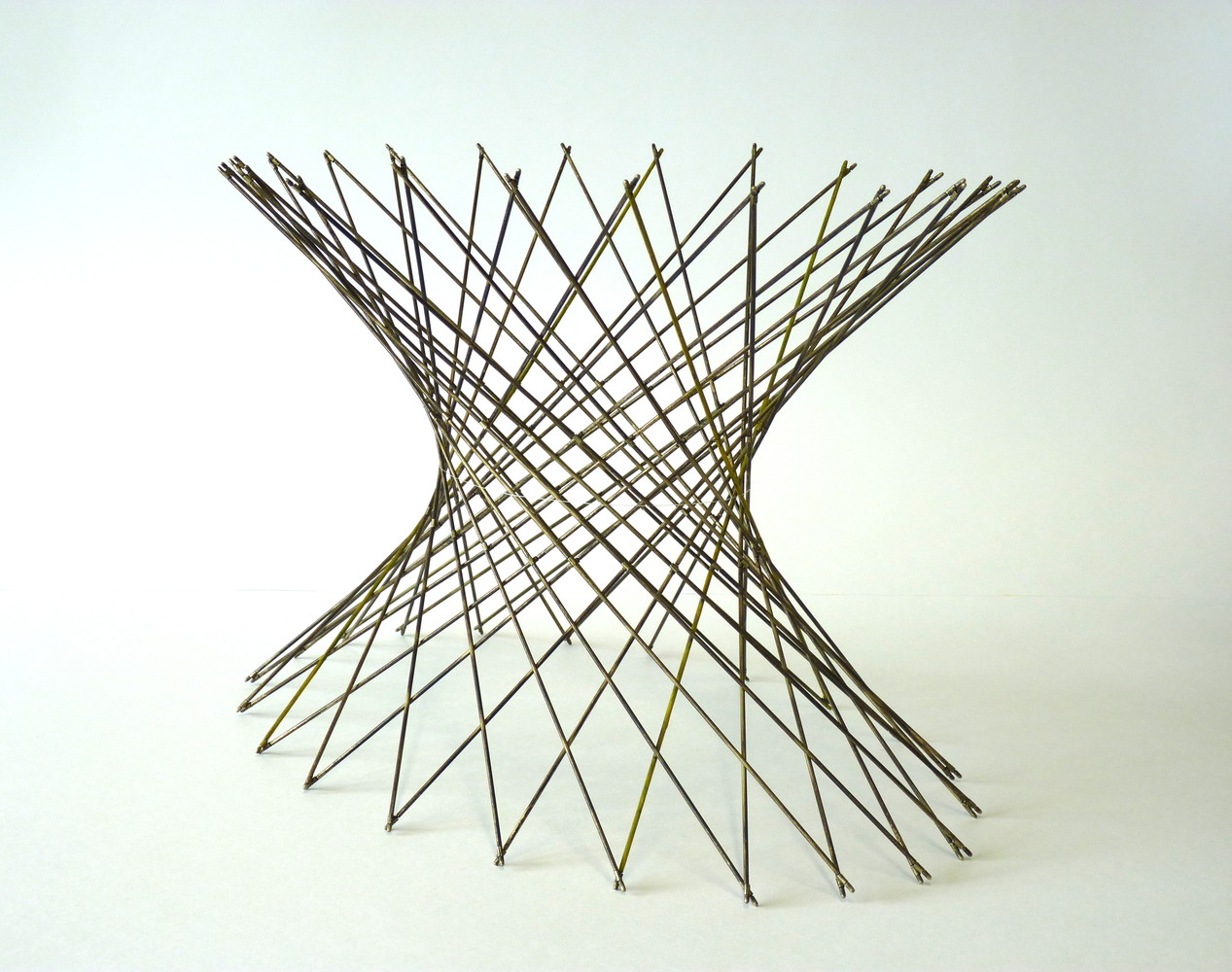}
  \includegraphics[scale=0.294]{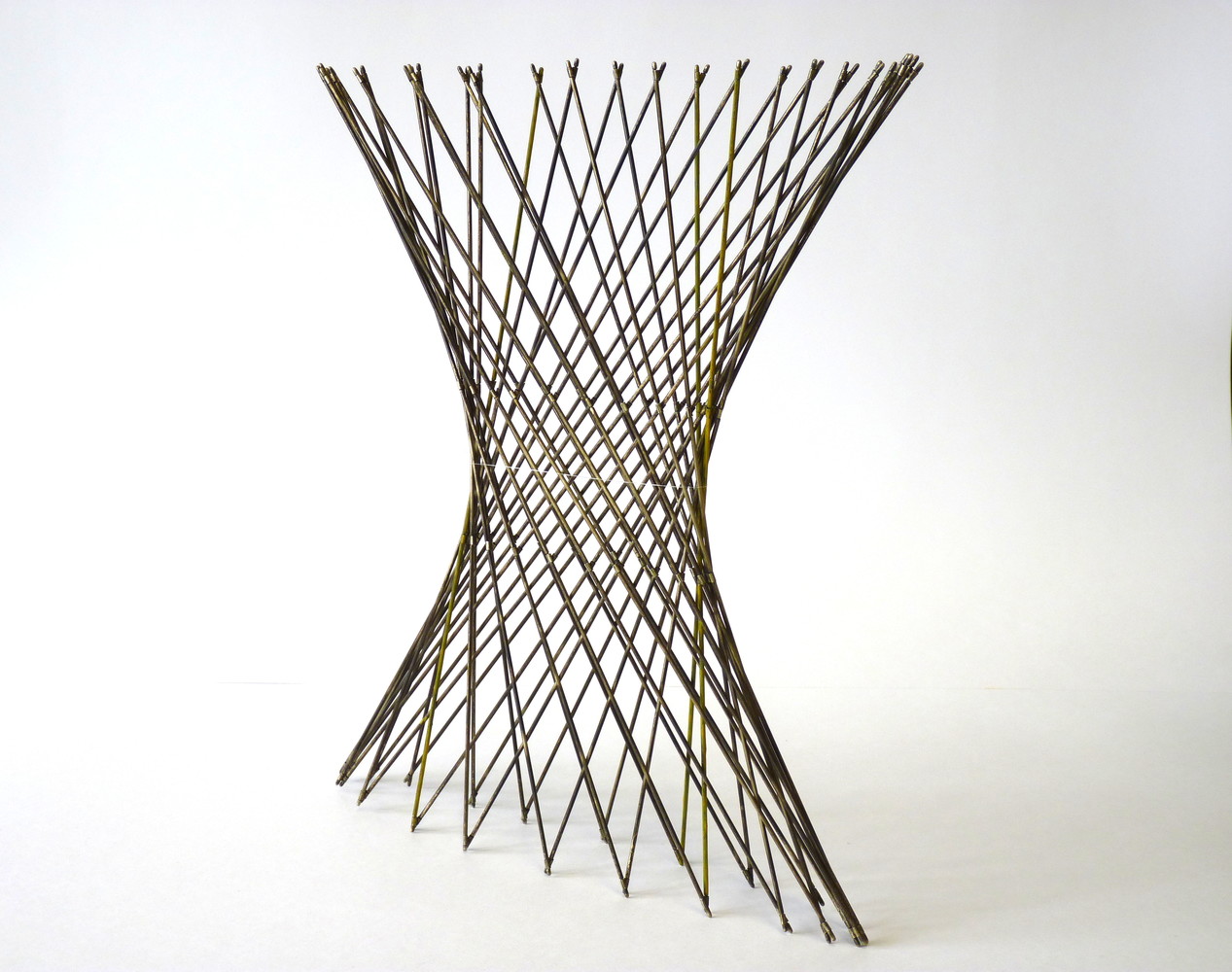}
  \caption{Photos of an ``isometrically'' deformable model of a one-sheeted hyperboloid from the collection of the Institute of Discrete Mathematics and Geometry, TU Wien.}
  \label{asymptotic_wien}
\end{figure}

\begin{figure}
  \centering
  \includegraphics[scale=0.135]{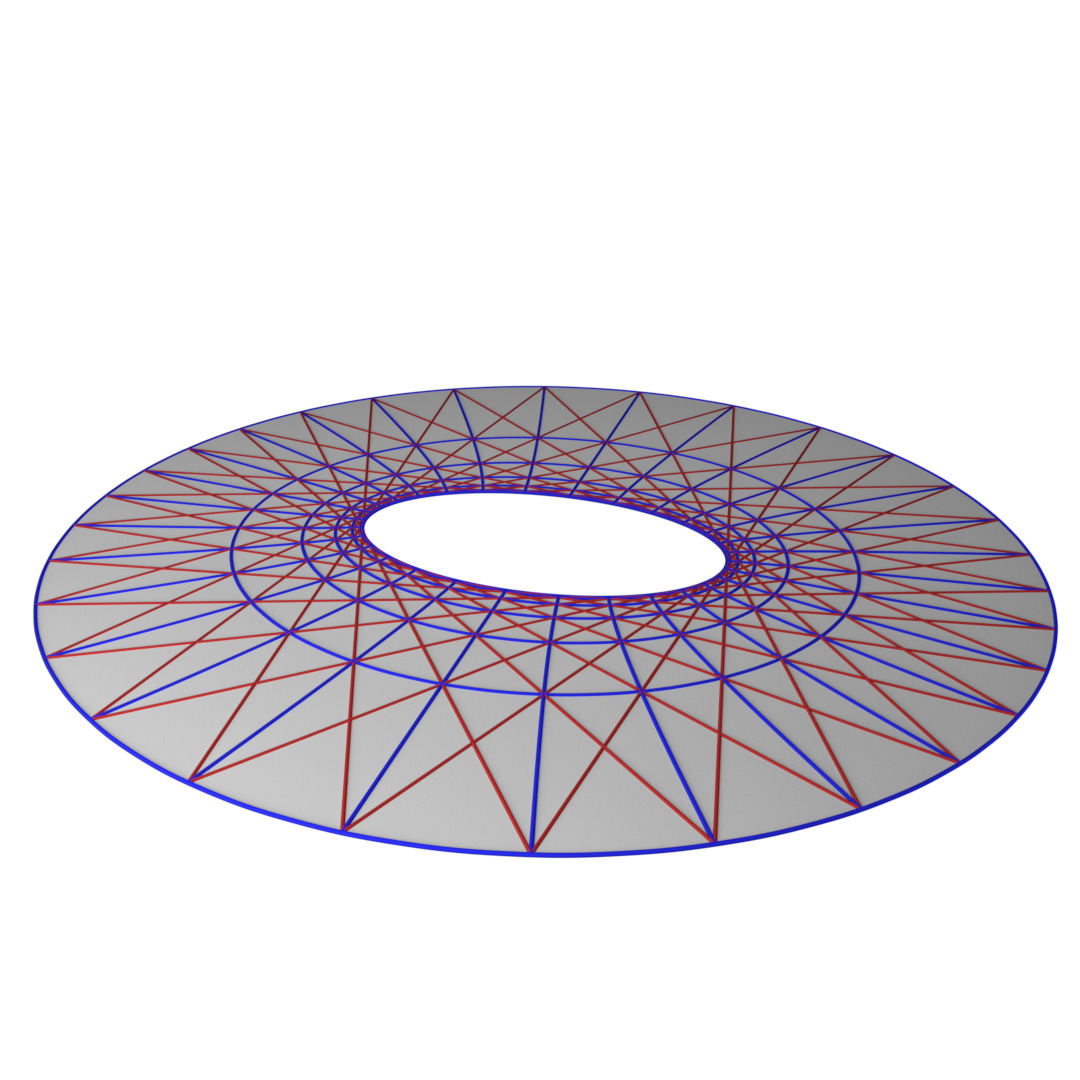}
  \includegraphics[scale=0.135]{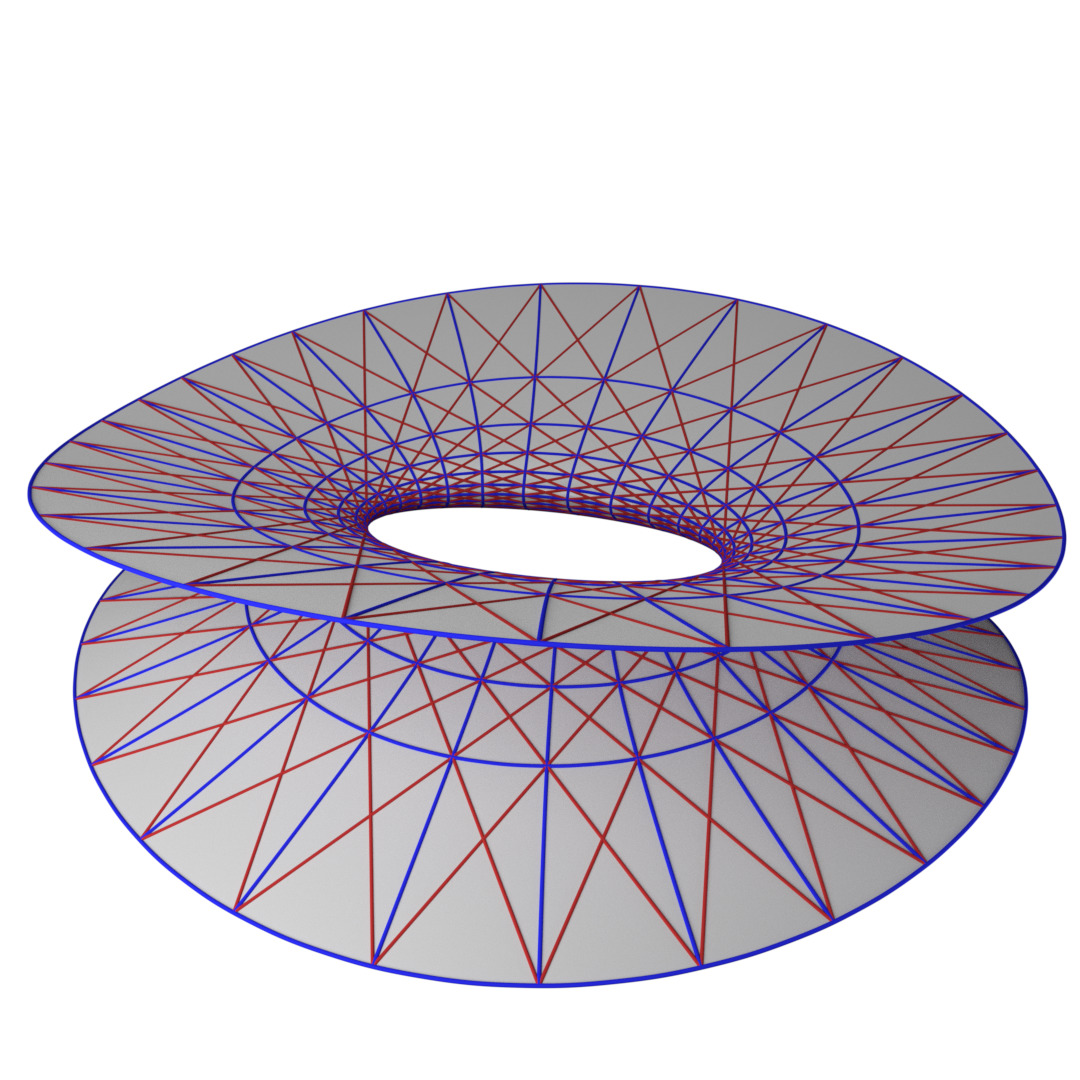}
  \includegraphics[scale=0.135]{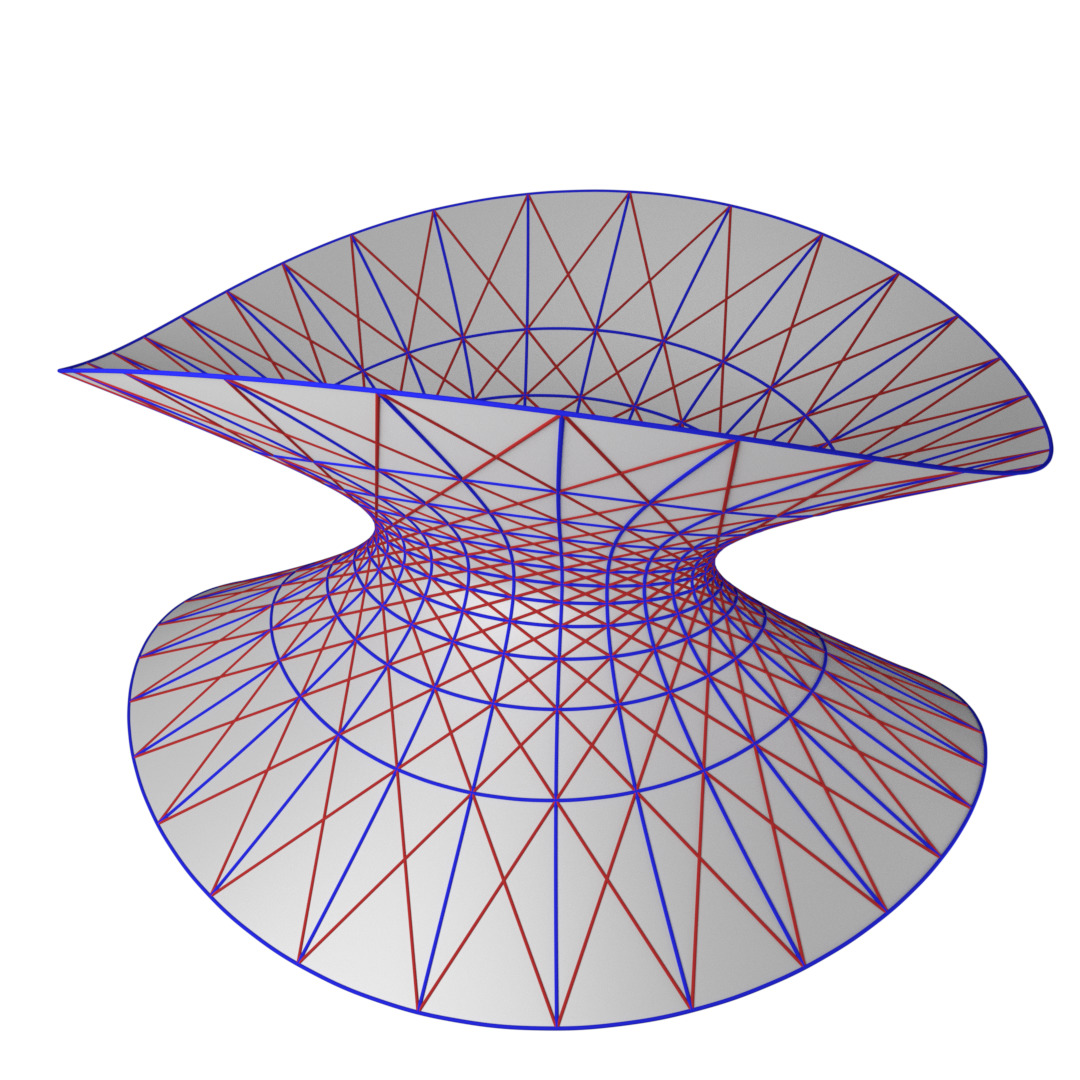}
  \includegraphics[scale=0.135]{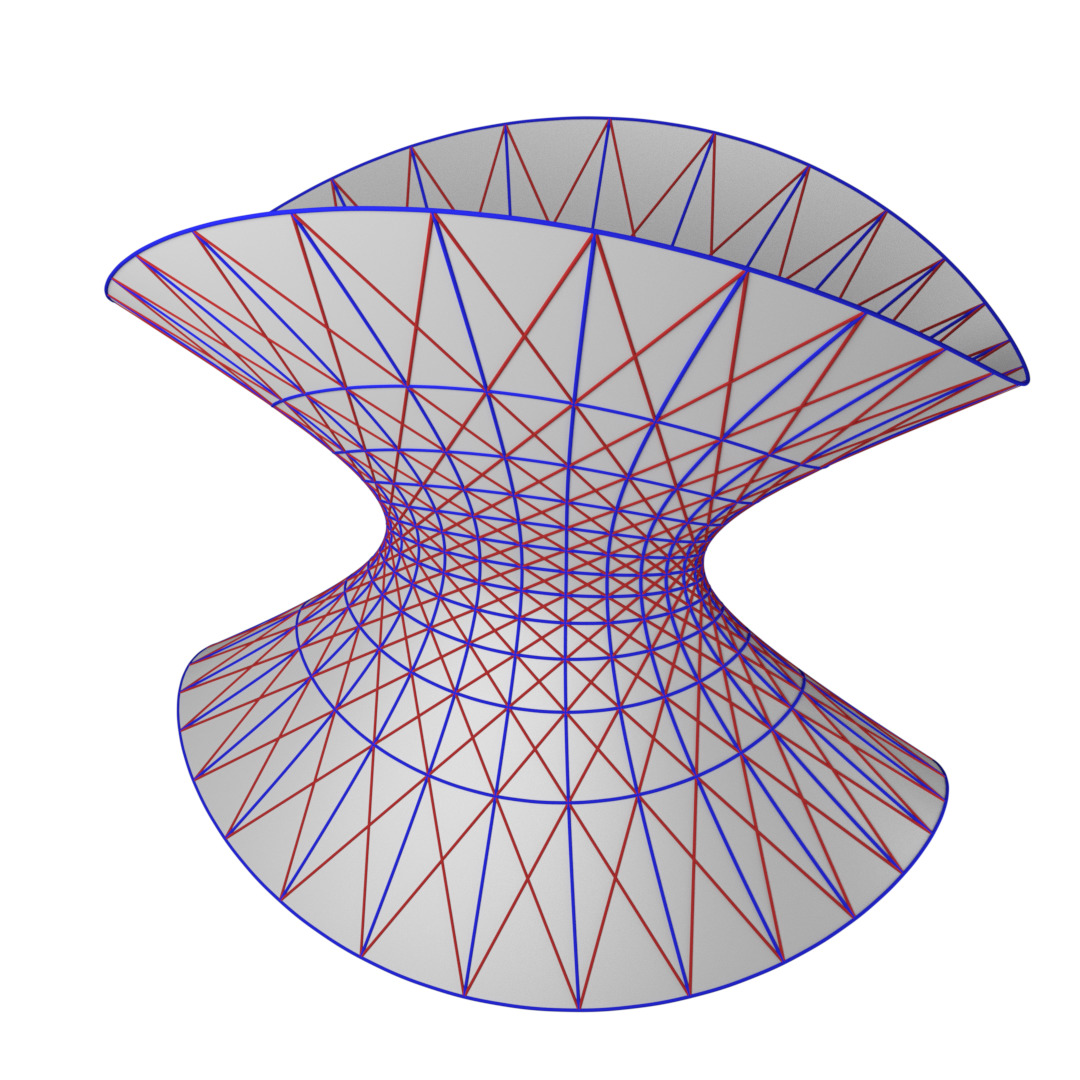}
  \includegraphics[scale=0.135]{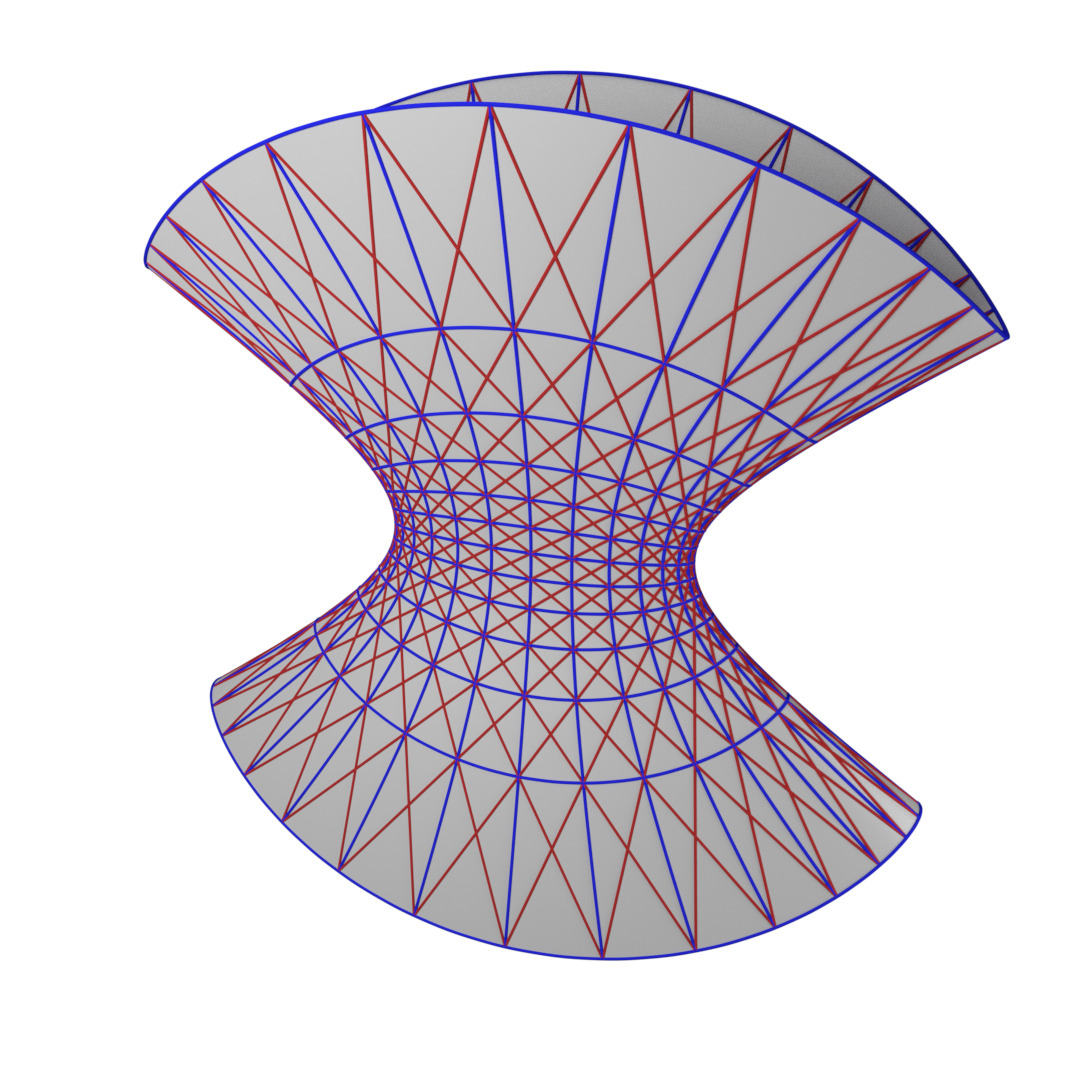}
  \includegraphics[scale=0.135]{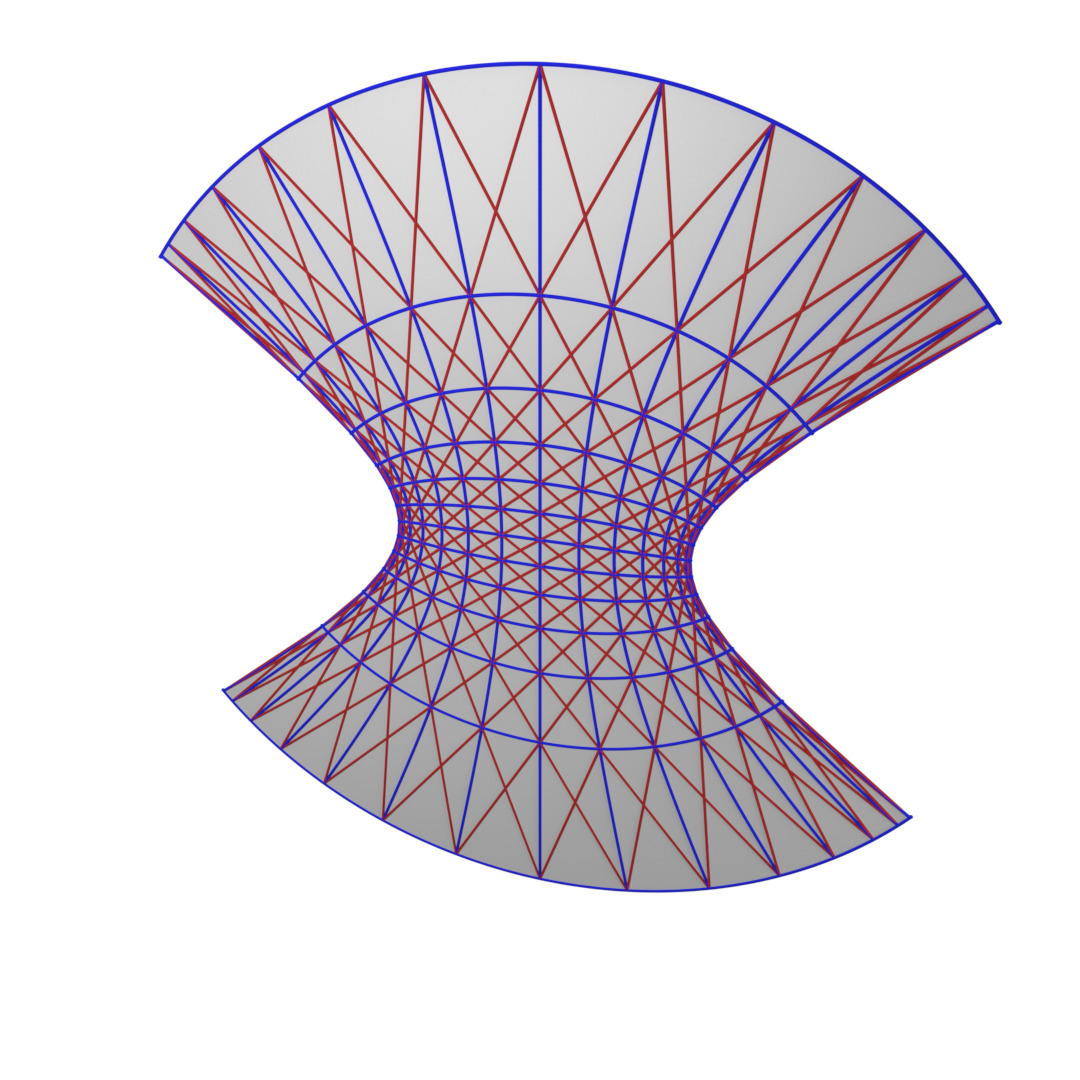}
\caption{The deformation of an AC-grid on a one-sheeted hyperboloid.}
\label{pic_deformhyp}
\end{figure}

\begin{remark}
It is observed that the special property of the quadrilaterals of AC-grids (cf.\ Remark \ref{touch}) implies that any AC-grid is associated with a one-parameter family of $\Z^2$-arrangements of spheres which are centred at the vertices and touch each other on the edges of the grid. During the deformation, touching spheres slide along each other but the relative position of aligned spheres does not change. In this manner, one obtains deformable models of one-sheeted hyperboloids composed of spheres (see Figure \ref{pic_spheres}). These, in turn, give rise to two associated models of one-sheeted hyperboloids composed of disks. On the one hand, each sphere may be replaced by the disk obtained by intersecting the sphere and the plane spanned by the pair of asymptotic lines passing through its centre. Any pair of neighbouring disks meet at the point on the asymptotic line where the corresponding pair of spheres touch. It is evident that the deformation of this model results in a rigid motion of the disks. An example of such a disk model is shown in Figure \ref{pic_spheres}. On the other hand, the quadrilaterals of any AC-grid may be represented by the disks bounded by the circles which pass through the sets of four points of contact of four touching spheres belonging to a quadrilateral (see Figure 9). Once again, the disks are linked at the points of contact of the spheres but this disk model is not deformable in the sense that the size of the disks changes as the quadrilaterals deform. 
\end{remark}

\begin{figure}
  \centering
  \includegraphics[scale=0.136]{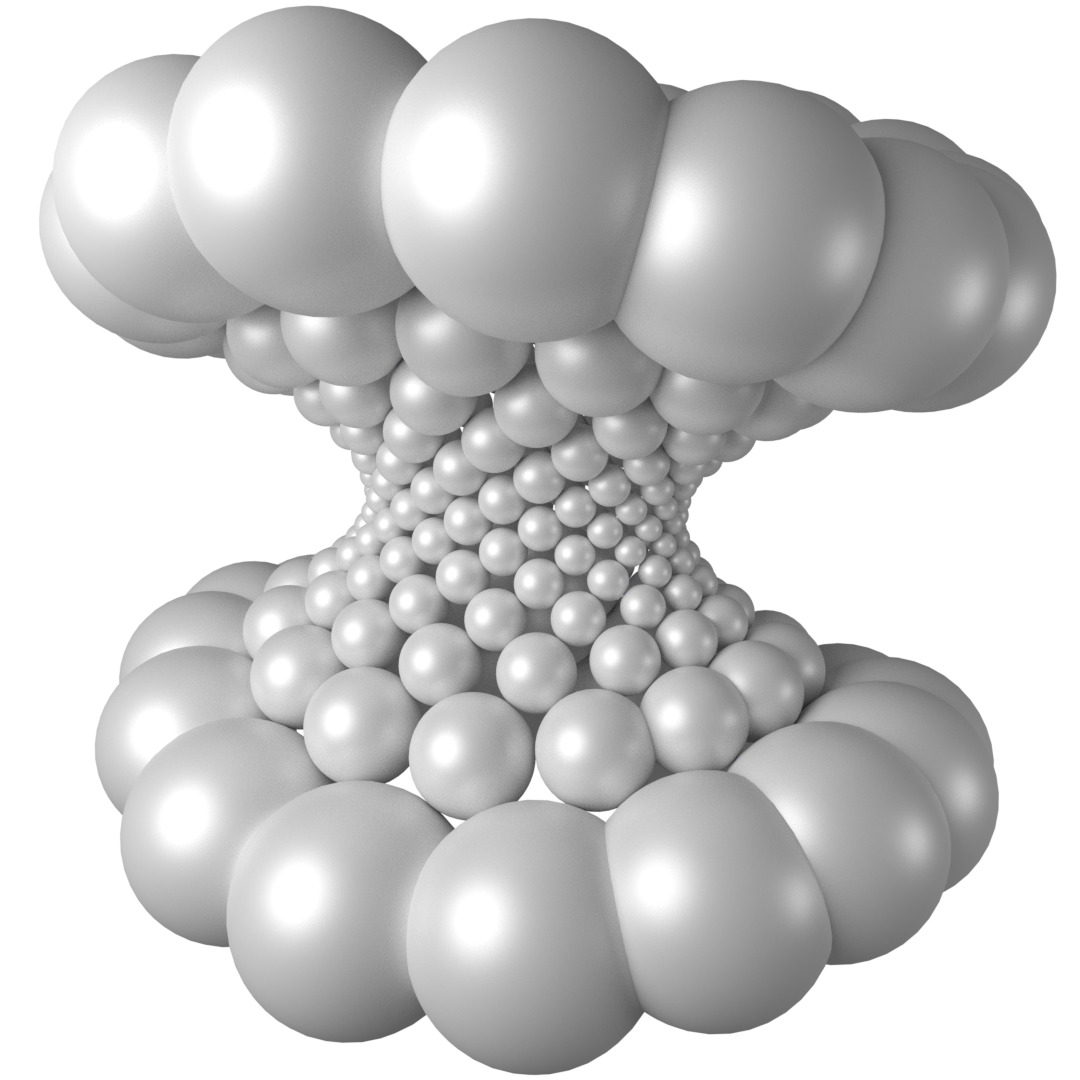}
  \includegraphics[scale=0.136]{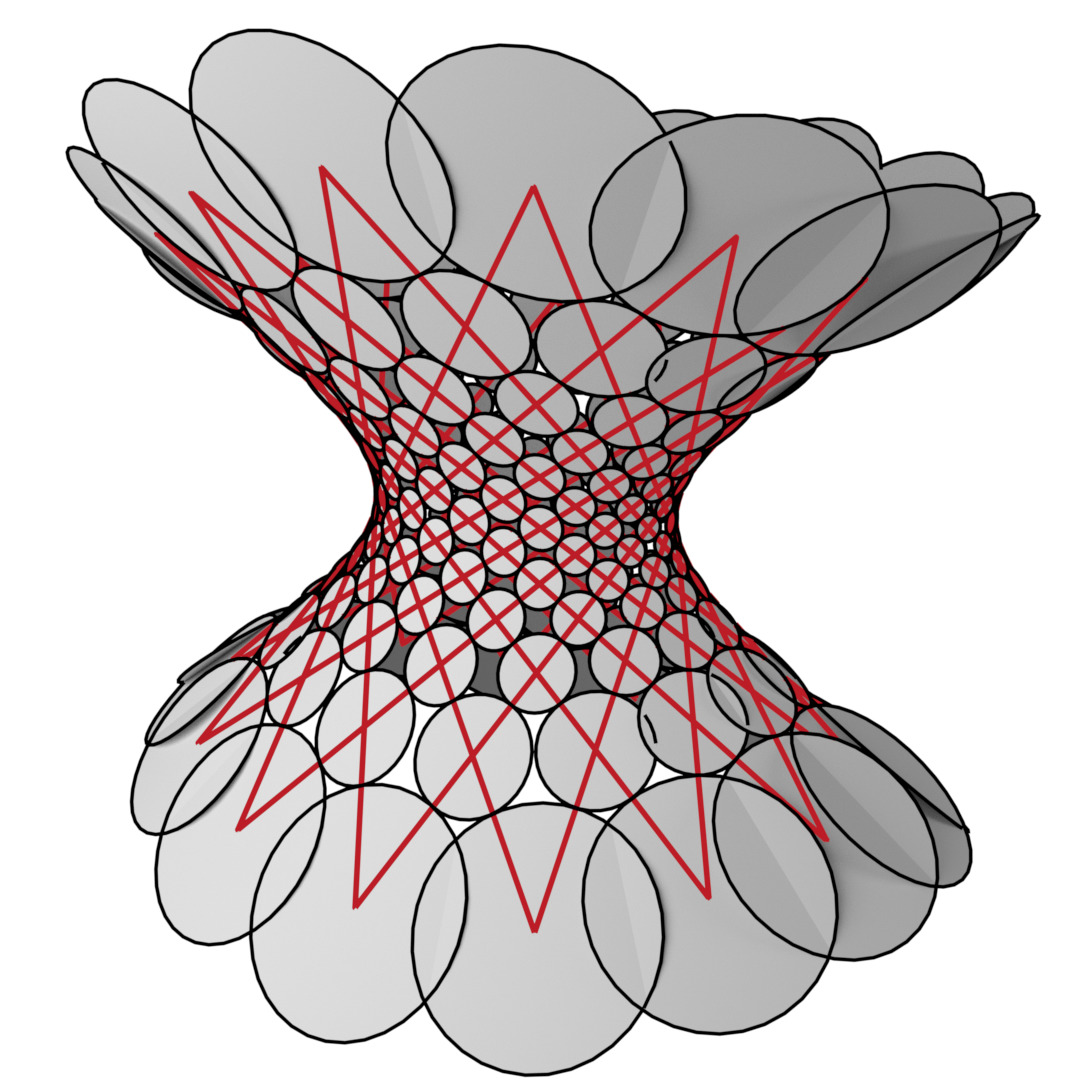}
  \includegraphics[scale=0.136]{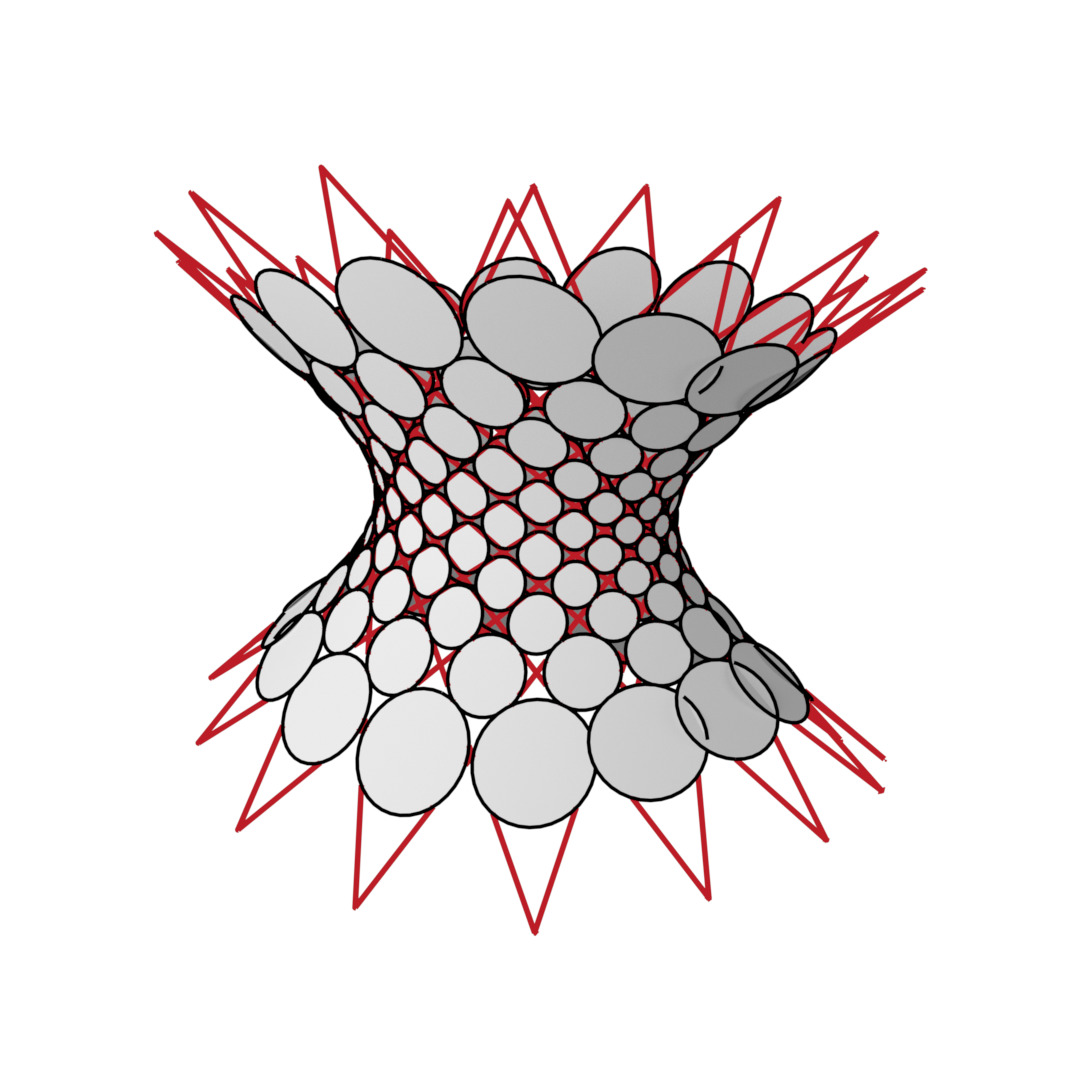}
  \caption{A sphere model and two disk models of the same one-sheeted hyperboloid.}
\label{pic_spheres}
\end{figure}


\section{Circular sections of (confocal) quadrics}

It is well known that any ellipsoid or one/two-sheeted hyperboloid possesses two one-parameter families of circular sections \cite{HCV52}. The circles of any such family lie in parallel planes. In the case of an ellipsoid or a two-sheeted hyperboloid, the planes are parallel to the tangent planes at the umbilic points of those quadrics (cf. Figure \ref{pic_sections}). In the following, we examine in detail the properties of the circular sections of both a single ellipsoid and its associated one-parameter family of confocal ellipsoids. The cases of one/two-sheeted hyperboloids may be treated in a similar manner. 

\begin{figure}
  \centering
  \raisebox{-0.5\height}{\includegraphics[scale=0.17]{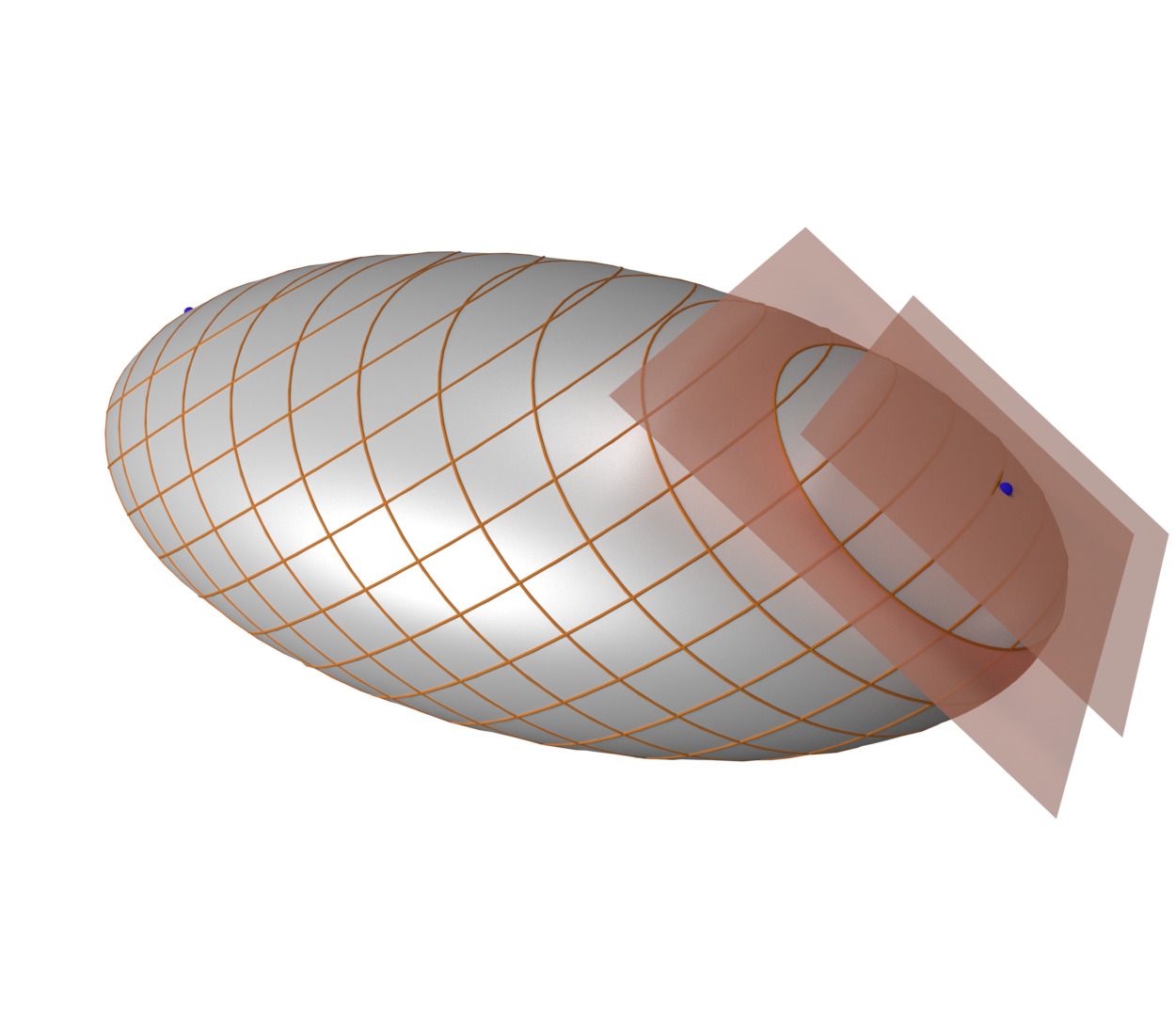}}
  \raisebox{-0.5\height}{\includegraphics[scale=0.17]{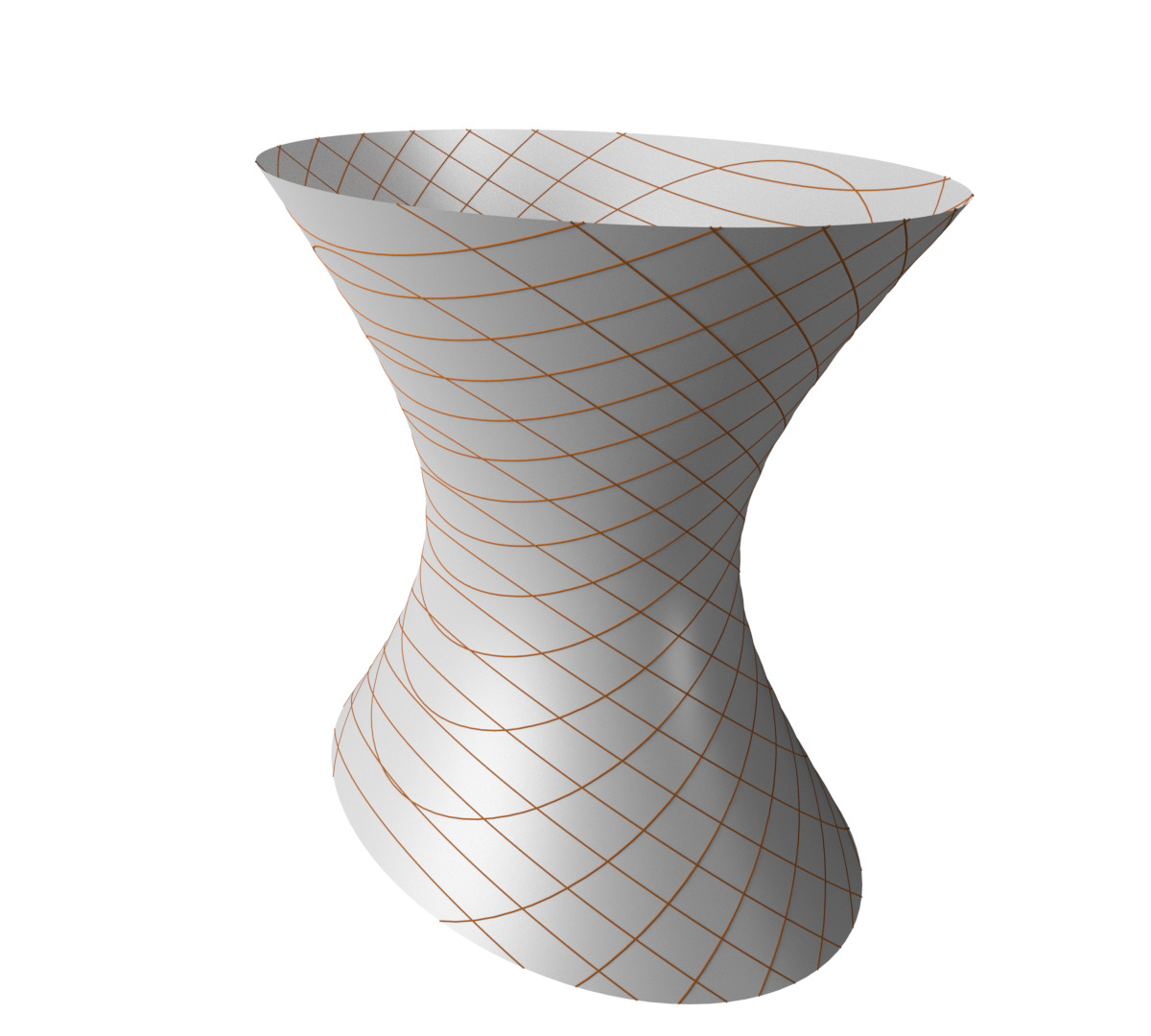}}
  \caption{Parallel circular sections of an ellipsoid and a one-sheeted hyperboloid.}
  \label{pic_sections}
\end{figure}

It turns out convenient to consider one-parameter families of ellipsoids which are obtained by uniformly scaling the ellipsoids of a system of confocal quadrics. Thus, we regard any (appropriately uniformly scaled) ellipsoid as being embedded in a corresponding one-parameter family of ellipsoids
\bela{E20}
    \frac{u_3+b}{u_3 + a}x^2 + y^2 + \frac{u_3+b}{u_3 + c}z^2 = 1
\ela
with the given ellipsoid being represented by a particular value of $u_3$ and suitable constants $a,b$ and $c$. By construction, the one-parameter family of ellipsoids is parametrised by
\bela{E21}
  x = \frac{f_1(s_1)f_2(s_2)\hat{f}_3(s_3)}{\sqrt{(a-b)(a-c)}},\quad
  y = \frac{g_1(s_1)g_2(s_2)}{\sqrt{(a-b)(b-c)}},\quad
  z = \frac{h_1(s_1)h_2(s_2)\hat{h}_3(s_3)}{\sqrt{(a-c)(b-c)}},
\ela
where 
\bela{E22}
  \hat{f}_3(s_3) = \frac{f_3(s_3)}{g_3(s_3)},\quad 
  \hat{h}_3(s_3) = \frac{h_3(s_3)}{g_3(s_3)}
\ela
subject to the functional relations \eqref{E10}. It is evident that $s_1$ and $s_2$ still parametrise the lines of curvature on the ellipsoids. In the case $u_3=-c$ corresponding to $h_3(s_3)=0$, the ellipsoid degenerates to an elliptic disk on the $(x,y)$-plane, while in the limit $u_3\rightarrow\infty$ corresponding to $\hat{f}_3(s_3)=\hat{h}_3(s_3)=1$, the ellipsoid becomes spherical. This corresponds to the spherical ellipsoid of infinite radius in any confocal family of ellipsoids. However, if we set aside the genesis of the parametrisation \eqref{E21} then we may ignore the constraint $u_3>-c$ and replace the functional relations \eqref{E10}$_{5,6}$ by their algebraic implication
\bela{E23}
  (b-c)\hat{f}^2_3(s_3) + (a-b)\hat{h}^2_3(s_3) = a-c.
\ela
In this manner, we extend the one-parameter family of ellipsoids beyond $u_3=\infty$, leading, in particular, to a second degeneration to an elliptic disk in the plane $x=0$ associated with $\hat{f}_3=0$. Accordingly, it is natural to introduce the parametrisation
\bela{E24}
  \hat{f}_3(s_3) = \sqrt{\frac{a-c}{b-c}}\cos s_3,\quad
  \hat{h}_3(s_3) = \sqrt{\frac{a-c}{a-b}}\sin s_3,\qquad 0\leq s_3\leq\frac{\pi}{2}
\ela
with $s_3=0$, $s_3=\pi/2$ and $s_3=\arctan\sqrt{\frac{a-b}{b-c}}$ representing the planar and spherical cases respectively.

In connection with the circular sections of the ellipsoids, the parametrisation
\bela{E25}
 \begin{aligned}
  f_1(s_1) &= \sqrt{a-c}\,\cos s_1,\quad & g_1(s_1) & = \pm\sqrt{a-b - (a-c)\cos^2 s_1},
  \quad &  h_1(s_1) & =  \sqrt{a-c}\,\sin s_1\\
  f_2(s_2) &= \sqrt{a-c}\,\cos s_2,\quad & g_2(s_2) & = \pm\sqrt{(a-c)\cos^2 s_2 - a+b},
  \quad & h_2(s_2) & =  \sqrt{a-c}\,\sin s_2
 \end{aligned}
\ela
of the lines of curvature which resolves the functional relations \eqref{E10}$_{1,2,3,4}$ turns out to be canonical. Here, the parameters $s_1$ and $s_2$ are constrained by
\bela{E26}
 s_0 \leq s_1 \leq \pi - s_0,\quad |s_2| \leq s_0,\qquad s_0= \arccos\sqrt{\frac{a-b}{a-c}}.
\ela
In particular, we obtain the projection
\bela{E27}
  \left(\bear{c}x\\ z\ear\right) = \frac{a-c}{\sqrt{(a-b)(b-c)}}
  \left(\bear{c}\cos s_1\cos s_2\cos s_3\\ \sin s_1\sin s_2\sin s_3\ear\right).
\ela
Accordingly, the net
\bela{E28}
  \alpha = s_1 + s_2 = \mbox{const},\quad \beta = s_1 - s_2 = \mbox{const}
 \ela
on any fixed ellipsoid, which is bisected by the lines of curvature, consists of planar curves which lie in two families of parallel planes since \eqref{E27} may be formulated as
\bela{E29}
  \left(\bear{c}x\\ z\ear\right) = \frac{a-c}{2\sqrt{(a-b)(b-c)}}
  \left(\bear{c}(\cos \beta+\cos\alpha)\cos s_3\\ (\cos \beta - \cos \alpha)\sin s_3\ear\right),
\ela
leading to, for instance,
\bela{E29a}
  x \sin s_3 + z \cos s_3 = \frac{(a-c)\cos\beta}{\sqrt{(a-b)(b-c)}}\sin s_3 \cos s_3.
\ela
In fact, as proven below, these planar curves are precisely the classical circular sections of the ellipsoid. In the following, we refer to the lines $\alpha=\mbox{const}$ and $\beta=\mbox{const}$ as the {\em circular lines} on the ellipsoid.

\begin{theorem}\label{ellipsoid}
On any ellipsoid, the lines of curvature and the circular lines form mutually diagonal nets. If the (uniformly scaled) ellipsoid is parametrised as in the preceding then the radii of the circles $\alpha=\mbox{const}$ and $\beta=\mbox{const}$ are given by $|\sin\alpha|$ and $|\sin\beta|$ respectively.
\end{theorem}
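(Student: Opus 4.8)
The plan is to treat the two assertions separately: the mutual diagonality is essentially immediate, while the radius claim calls for a direct computation with the explicit parametrisation.

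For the diagonality I would invoke Theorem~\ref{bisection}, or more precisely the converse statement established in its proof: if $\br(u,v)$ is a parametrisation in which a net $\mathcal{N}_1$ is given by $u=\mbox{const}$, $v=\mbox{const}$, then the net whose curves are $u+v=\mbox{const}$ and $u-v=\mbox{const}$ is diagonal to $\mathcal{N}_1$. Here the lines of curvature are the curves $s_1=\mbox{const}$ and $s_2=\mbox{const}$, so with $u=s_1$, $v=s_2$ the circular lines \eqref{E28} are exactly the $u\pm v=\mbox{const}$ net. Hence the circular lines are diagonal to the lines of curvature; since the latter are orthogonal, Theorem~\ref{bisection} furthermore guarantees that they bisect the circular lines, as already reflected in the construction preceding \eqref{E28}.

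For the radii I would first record that each curve $\alpha=\mbox{const}$ and each curve $\beta=\mbox{const}$ is planar: this is the content of \eqref{E29a} (for $\beta=\mbox{const}$) and of its companion obtained by forming $x\sin s_3-z\cos s_3$ from \eqref{E29} (for $\alpha=\mbox{const}$), the two families of planes being parallel with respective unit normals $(\sin s_3,0,\cos s_3)$ and $(\sin s_3,0,-\cos s_3)$. Fixing $s_3$ and $\alpha$, I would introduce the orthonormal in-plane frame $\be_1=(0,1,0)$, $\be_2=(\cos s_3,0,\sin s_3)$ and the in-plane coordinates $\xi=y$ and $\eta=x\cos s_3+z\sin s_3$; a short computation from \eqref{E29} gives $\eta=C\cos\beta+C\cos\alpha\cos 2s_3$, where $C=(a-c)/[2\sqrt{(a-b)(b-c)}]$, so $\eta$ is affine in $\cos\beta$ along the curve. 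Writing $P=a-b$, $Q=a-c$, $R=b-c=Q-P$ and using \eqref{E25} together with $\cos^2 s_1+\cos^2 s_2=1+\cos\alpha\cos\beta$ and $\cos s_1\cos s_2=\tfrac12(\cos\alpha+\cos\beta)$, one obtains $\xi^2=y^2=g_1^2g_2^2/(PR)$ as an explicit quadratic in $\cos\beta$. The decisive point is that the coefficient of $\cos^2\beta$ in $y^2$ equals $-C^2$, so it cancels against the $C^2\cos^2\beta$ hidden in $(\eta-\eta_0)^2$; the residual linear term in $\cos\beta$ is then removed by the unique centre offset $\eta_0$, and the surviving constant collapses, via $Q-P=R$, to exactly $\sin^2\alpha$. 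Thus $\xi^2+(\eta-\eta_0)^2=\sin^2\alpha$ identically along the curve, so the section lies on a circle of radius $|\sin\alpha|$; since a planar section of an ellipsoid is a single conic, the section \emph{is} that circle. The value $|\sin\beta|$ follows from the symmetry $\alpha\leftrightarrow\beta$ of $y^2$ and of the in-plane set-up.

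The main obstacle I anticipate is bookkeeping rather than conceptual: one must reduce $g_1^2g_2^2$ to a clean quadratic in $\cos\beta$ and verify both that its $\cos^2\beta$-coefficient is precisely $-C^2$ (which makes the quadratic term cancel automatically) and that, after the centre shift, the constant telescopes through $Q-P=R$ to $\sin^2\alpha$. The one genuinely structural subtlety is that the centre of the circle must lie on $y=0$: at fixed $\cos\beta$ both signs of $y=\pm\sqrt{g_1^2g_2^2/(PR)}$ occur, which forces any admissible in-plane centre to have vanishing $\xi$-coordinate and so justifies writing the circle condition in the form $\xi^2+(\eta-\eta_0)^2=\mbox{const}$.
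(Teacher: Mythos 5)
Your proposal is correct, and its diagonality part coincides with the paper's (implicit) argument: both rest on the converse statement in the proof of Theorem~\ref{bisection} applied to the curvature-line parameters $(s_1,s_2)$, with \eqref{E28} defining the diagonal net. For the radii, however, you take a genuinely different route. The paper's proof is a single curvature computation: using \eqref{E21}, \eqref{E24}, \eqref{E25} it evaluates $|\br_\alpha\times\br_{\alpha\alpha}|/|\br_\alpha|^3 = 1/|\sin\beta|$ and combines this constant curvature with the planarity already established via \eqref{E29a} to conclude that $\beta=\mbox{const}$ is a circle of radius $|\sin\beta|$ (planarity is indispensable there, since constant curvature alone does not characterise circles in space). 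You instead synthesise the circle equation directly: in the in-plane coordinates $(\xi,\eta)$ you verify the identity $\xi^2+(\eta-\eta_0)^2=\sin^2\alpha$ along $\alpha=\mbox{const}$, the decisive cancellation being that the $\cos^2\beta$-coefficient of $y^2$ equals $-C^2$. I checked the bookkeeping: with $P=a-b$, $Q=a-c$, $R=b-c$ one gets $y^2PR=-P^2+PQ(1+\cos\alpha\cos\beta)-\tfrac{Q^2}{4}(\cos\alpha+\cos\beta)^2$, the linear term fixes $\eta_0=C\cos\alpha\cos 2s_3+(2P-Q)\cos\alpha/(2\sqrt{PR})$, and the constant collapses via $(2P-Q)^2-Q^2=-4PR$ to $\sin^2\alpha$, so the computation is sound, as is the symmetric treatment of the $\beta=\mbox{const}$ family with $\eta'=x\cos s_3-z\sin s_3$. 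Your approach costs more algebra but buys the explicit centre of each circle and bundles planarity into the same calculation, whereas the paper's curvature computation is shorter. Two small economies: the preliminary argument that the centre must lie on $y=0$ is dispensable, since once the identity is verified for your ansatz the curve lies on that circle regardless of how the ansatz was motivated; and the closing appeal to ``a planar section is a single conic'' is likewise not needed for the statement as formulated, because the circular lines are the curves themselves, which you have already shown to lie on circles of the asserted radii.
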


\begin{proof} 
On use of the parametrisation \eqref{E21}, \eqref{E24} and \eqref{E25}, it is straightforward to show that the curvature of the $\alpha$-lines is given by
\bela{E30}
  \frac{|\br_\alpha\times\br_{\alpha\alpha}|}{|\br_\alpha|^3}  = \frac{1}{|\sin\beta|}
\ela
so that the planar curve $\beta=\mbox{const}$ is indeed a circle of radius $|\sin\beta|$.
\end{proof}

As in the previous section, we may now regard the one-parameter family of ellipsoids
\bela{E31}
  \left(\frac{b-c}{a-c}\right)\frac{x^2}{\cos^2s_3} + y^2 + \left(\frac{a-b}{a-c}\right)\frac{z^2}{\sin^2s_3} = 1
\ela
as being generated by deforming any of its members, where $s_3$ constitutes the deformation parameter. In order to describe the limiting behaviour of the circular lines as $s_3\rightarrow0$ or $s_3\rightarrow\frac{\pi}{2}$, we need to introduce an appropriate notion of tangency. This is due to the fact that not all circular lines on the ellipsoids cross the ellipse in the plane $z=0$ or $x=0$ respectively. Thus, we say that two conics ``touch'' each other if two of the four possibly complex points of intersection coincide. Now, we are in a position to summarise the properties of the deformation of the ellipsoids in the following theorem.

\begin{theorem}\label{deformationcircles}
The deformation which generates the uniformly scaled and extended one-parameter family of confocal ellipsoids \eqref{E31} enjoys the following properties.
\begin{itemize}
\item Lines of curvature and circular lines and their mutually diagonal relationship are preserved.
\item The deformation is isometric along the circular lines, that is, the distance between any two points on and along any of the circular sections is preserved.
\item In the two limiting cases $s_3=0$ and $s_3=\pi/2$, the mutually diagonal nets of lines of curvature and circular lines become planar with the circles touching an ellipse and the lines of curvature becoming associated confocal conics. The two planar pairs of nets of circles and confocal conics are isometric along the circles in the above sense. 
\end{itemize}
\end{theorem}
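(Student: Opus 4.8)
The plan is to follow the blueprint of the proof of Theorem \ref{deformation}, exploiting the explicit parametrisation \eqref{E21}, \eqref{E24}, \eqref{E25}, in which the deformation is encoded by the single parameter $s_3$ while the lines of curvature are labelled by $(s_1,s_2)$.

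For the first claim I would observe that the parametrisation \eqref{E25} of the lines of curvature in terms of $(s_1,s_2)$ does not involve $s_3$, and neither do the definitions $\alpha=s_1+s_2$, $\beta=s_1-s_2$ of the circular lines \eqref{E28}. Hence the pre-images on the $(s_1,s_2)$-plane of both nets are independent of the deformation parameter and are trivially mutually diagonal, exactly as in the proof of Theorem \ref{diagonal}. Since Theorem \ref{ellipsoid} guarantees that the curves $\alpha=\mbox{const}$, $\beta=\mbox{const}$ are genuine circles on every member of the family \eqref{E31}, the mutually diagonal relationship between lines of curvature and circular lines is preserved throughout the deformation.

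The second claim is the heart of the matter. Unlike the hyperboloid deformation of Theorem \ref{deformation}, the present family \eqref{E31} is not confocal, so the orthogonality/Ivory argument is unavailable and one must argue directly. The crucial observation is that the coordinate $y=g_1(s_1)g_2(s_2)/\sqrt{(a-b)(b-c)}$ in \eqref{E21} is independent of $s_3$, so that during the deformation each point moves only in the $(x,z)$-plane. Parametrising a circular line $\beta=\mbox{const}$ by $\alpha$ and writing $C=(a-c)/(2\sqrt{(a-b)(b-c)})$, formula \eqref{E29} gives $\br_\alpha=(-C\sin\alpha\cos s_3,\,\partial_\alpha y,\,C\sin\alpha\sin s_3)$, whence
\[
  |\br_\alpha|^2=C^2\sin^2\alpha\,(\cos^2 s_3+\sin^2 s_3)+(\partial_\alpha y)^2=C^2\sin^2\alpha+(\partial_\alpha y)^2,
\]
which is manifestly independent of $s_3$; the same computation with the roles of $\alpha$ and $\beta$ interchanged settles the circles $\alpha=\mbox{const}$. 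Since corresponding points on a circular line carry the same value of $\alpha$ (respectively $\beta$) for every $s_3$, integrating the $s_3$-independent speed $|\br_\alpha|$ shows that arc length along the circular lines is preserved, i.e.\ the deformation is isometric along the circular lines.

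For the third claim I would treat the two limits $s_3\to 0$ and $s_3\to\pi/2$ in parallel with the limiting cases $u_2=-c$ and $u_2=-b$ of Theorem \ref{deformation}. At $s_3=0$ one has $z=0$ by \eqref{E27}, so the ellipsoid collapses onto the elliptic disk bounded by $\tfrac{b-c}{a-c}x^2+y^2=1$ in the plane $z=0$; eliminating $s_1$ or $s_2$ from the reduced parametrisation via the factorisation property produces, exactly as in \eqref{E19}, a standard confocal coordinate system, so that the lines of curvature degenerate into conics confocal to this boundary ellipse. The plane of a circle $\beta=\mbox{const}$ has normal $(\sin s_3,0,\cos s_3)$ by \eqref{E29a}, which tends to $(0,0,1)$, so the circle (of $s_3$-invariant radius $|\sin\beta|$ by Theorem \ref{ellipsoid}) limits to a genuine circle lying in the plane of the disk; the analogous statements at $s_3=\pi/2$ hold with $x=0$, and the isometry of the second claim passes to both limits. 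The main obstacle is to establish that these limiting circles \emph{touch} the boundary ellipse in the extended sense introduced before the theorem, namely that two of the four (possibly complex) intersection points of circle and ellipse coincide. I expect to verify this by substituting the circle into the ellipse equation and checking that the resulting quartic in the affine parameter has a double root; the delicate point, and the reason the extended notion of tangency is required, is precisely the bookkeeping for circles that do not meet the boundary ellipse in real points, where the coincident intersection points are complex.
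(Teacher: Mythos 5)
Your proposal is correct and follows essentially the same route as the paper: preservation of the mutually diagonal nets via the $s_3$-independence of the parametrisation \eqref{E25}, direct verification that the squared speeds along the circular lines do not depend on $s_3$ (your split into the $s_3$-independent $y$-component plus the $(x,z)$-speed $C^2\sin^2\alpha$ is a streamlined form of the paper's closed formula \eqref{E32z}, which records the common value $\br_\alpha^2=\br_\beta^2$), and a limit analysis establishing tangency in the extended, possibly complex, sense. The double-root computation you defer is precisely what the paper carries out: it exhibits the limiting circles \eqref{E32b} explicitly and computes the intersection coordinates \eqref{E32c}, finding two coinciding pairs of intersection points with purely imaginary $y$ for small $\beta$, and it handles $s_3=\pi/2$ by the formal symmetry $(x,z,a,c)\rightarrow(z,x,c,a)$ just as you indicate.
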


\begin{proof}
Firstly, the relevant parts of the proof of Theorem \ref{deformation} apply {\it mutatis mutandis} in the current situation. Secondly, the independence of the radii of the circular sections on the deformation parameter $s_3$ as stated in Theorem \ref{ellipsoid} is a first indication that the deformation is isometric along the circular lines. In fact, one may directly verify that
\bela{E32z}
 \br_{\alpha}^2 = \br_{\beta}^2 = \frac{(a-c)^2[\cos^2s_1 - \cos^2s_2]^2}{4[a-b - (a-c)\cos^2s_1][(a-c)\cos^2 s_2 - a+b]}
\ela
which is likewise independent of $s_3$ and, hence, the asserted ``isometry'' has been proven.

Thirdly, in the limit $s_3=0$, the ellipsoids degenerate to an elliptic disk on the plane $z=0$ bounded by the ellipse 
\bela{E32a}
  \frac{b-c}{a-c}x^2 + y^2 = 1
\ela
and the circular lines $\beta=\mbox{const}$, for instance, become the circles
\bela{E32b}
 \left(x - \sqrt{\frac{a-b}{b-c}}\cos\beta\right)^2 + y^2 = \sin^2\beta.
\ela
The coordinates of the points of intersection of these two conics are given by
\bela{E32c}
  x = \frac{(a-c)\cos\beta}{\sqrt{(a-b)(b-c)}},\quad y^2 = 1 - \frac{a-c}{a-b}\cos^2\beta
\ela
so that there exist two pairs of coinciding points of intersection. Hence, the circles touch the ellipse but the $y$-coordinate of the points of contact becomes purely imaginary for sufficiently small $\beta$.

Finally, even though the case $s_3=\pi/2$ cannot be directly associated with a degenerate ellipsoid of the underlying family of confocal ellipsoids, it is evident that this case may be treated by formally considering the symmetry $(x,z,a,c)\rightarrow(z,x,c,a)$, leading to the case $s_3=0$. 
\end{proof}

Once again, the theory established in the preceding may now be applied to appropriate samplings of lines of curvature and circular lines on ellipsoids. 

\begin{definition}
A configuration of the combinatorics of a $\Z^2$-grid of two sequences $\left(\ell_n\right)_{n\in\Z}$ and $\left(m_{n'}\right)_{n'\in \Z}$ of the two one-parameter families of circular lines on an ellipsoid is termed a {\em CC-grid} if pairs of opposite vertices of the elementary quadrilaterals formed by the circles $\ell_n,\ell_{n+1}$ and $m_{n'},m_{n'+1}$ are connected by lines of curvature.
\end{definition}

The analogue of IC-nets in hyperbolic geometry has been introduced in \cite{AB}. Even though, in the current context, we adopt a Euclidean point of view, the following definition of these nets immediately lends itself to an interpretation in terms of the Poincar\'e half-plane model of hyperbolic geometry.

\begin{definition}
A planar configuration of circles of the combinatorics of a $\Z^2$-grid is termed an {\em HIC-net} if the circles admit a common axis of symmetry and the elementary quadrilaterals formed by the circles circumscribe (pairs of) circles (see Figure \ref{pic_flatccgrid}).
\end{definition}


We conclude that CC-grids are non-planar generalisations of (particular) HIC-nets. The reasoning is analogous to that provided in the case of AC-grids. 

\begin{corollary}
The deformation linking the family of ellipsoids as detailed in Theorem \ref{deformationcircles} acts on associated CC-grids as follows. 
\begin{itemize}
\item CC-grids are preserved by the deformation.
\item During the deformation, any two intersecting circles undergo a relative rotation about the line passing through the two points of intersection.
\item CC-grids become HIC-nets in the planar limiting cases $s_3=0$ and $s_3=\pi/2$ (cf.\ Figure \ref{pic_flatccgrid}). The two HIC-nets are isometric along the circles.
\item CC-grids and their deformation are algebraically represented by the privileged parametrisation \eqref{E21}, \eqref{E24}, \eqref{E25} with the vertices of the grids being parametrised by $(s_1,s_2) = \delta(n_1+n_2,n_1-n_2) + (s_1^0,s_2^0)$, $n_1,n_2\in\Z$. 
\end{itemize}
\end{corollary}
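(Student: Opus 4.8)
The plan is to read off all four assertions from Theorem~\ref{deformationcircles} and Theorem~\ref{ellipsoid} together with the explicit parametrisation \eqref{E21}, \eqref{E24}, \eqref{E25}, following the template of the AC-grid corollary. First I would settle the algebraic representation (the last bullet), which simultaneously organises the whole argument. For a lattice $(s_1,s_2) = \delta(n_1+n_2,n_1-n_2)+(s_1^0,s_2^0)$ one computes $\alpha = s_1+s_2 = 2\delta n_1 + s_1^0 + s_2^0$ and $\beta = s_1 - s_2 = 2\delta n_2 + s_1^0 - s_2^0$, so the indices $n_1$ and $n_2$ select precisely two equally spaced subfamilies of the circular lines $\alpha=\mbox{const}$ and $\beta=\mbox{const}$, while the lines of curvature $s_1=\mbox{const}$, $s_2=\mbox{const}$ join opposite vertices of each elementary quadrilateral. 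This exhibits a CC-grid and, crucially, shows that all the combinatorial grid data live in the $(s_1,s_2)$-plane and are independent of the deformation parameter $s_3$.

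From this independence the first bullet (preservation of CC-grids) is immediate: varying $s_3$ moves the grid rigidly in $\R^3$ but leaves the underlying lattice untouched, and Theorem~\ref{deformationcircles} guarantees that the circular lines, the lines of curvature, and their mutual diagonality persist throughout. For the second bullet I would use that each circular line keeps constant radius $|\sin\alpha|$ or $|\sin\beta|$ (Theorem~\ref{ellipsoid}) and that the deformation is isometric along the circular lines (Theorem~\ref{deformationcircles}); hence every circle moves as a rigid body in $\R^3$ as $s_3$ varies. Two crossing circles of the grid share the two points determined by the fixed pair $(\alpha,\beta)$ (these two points being distinguished by the $\pm$ ambiguity of $g_1,g_2$ in \eqref{E25}, i.e.\ they are reflections of one another across $y=0$), and these common points persist for all $s_3$. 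A rigid motion of one circle relative to the other that fixes two points must be a rotation about the line through them, which is exactly the claim.

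The limiting/HIC-net bullet is where the real work lies and it mirrors the AC-grid argument. Using \eqref{E32a}--\eqref{E32c} in the limit $s_3=0$ (and the symmetry $(x,z,a,c)\mapsto(z,x,c,a)$ for $s_3=\pi/2$), the circles $\beta=\mbox{const}$ acquire centres on the $x$-axis, so the planar family possesses a common axis of symmetry, while the lines of curvature become confocal conics. To identify this configuration as an HIC-net I must verify that the elementary quadrilaterals circumscribe (pairs of) circles; this is the circle-analogue of Remark~\ref{touch} and is the main obstacle. I expect to obtain it by invoking the theory of \cite{AB}: the planar circular sections form a family of circles with collinear centres tangent to the fixed ellipse \eqref{E32a}, and mutual diagonality with the confocal-conic net forces the required tangent-circle condition, exactly as the Graves--Chasles mechanism does for IC-nets. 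The isometry along the circles in both planar limits then follows from the $s_3$-independence of $\br_\alpha^2=\br_\beta^2$ in \eqref{E32z}.

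Finally, for completeness I would run the converse, as in the AC-grid proof: given a particular HIC-net, one may normalise its circles to touch \eqref{E32a}, realise that ellipse as the boundary of a suitable uniformly scaled family of confocal ellipsoids, and then read off the CC-grid whose $s_3=0$ limit reproduces the given HIC-net. The only genuinely nontrivial input is the verification of the inscribed-circle property in the planar limit; all remaining steps are direct consequences of the preservation of mutual diagonality and of the isometry along circular lines already established in Theorem~\ref{deformationcircles}.
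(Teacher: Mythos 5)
Your treatment of the first, second and fourth bullets is sound (your rigid-motion argument for the relative rotation about the chord through the two intersection points is in fact more explicit than anything the paper records), but at the one step you yourself identify as "where the real work lies" the proposal has a genuine gap. You propose to obtain the incircle property of the planar quadrilaterals by "invoking the theory of \cite{AB}" and asserting that mutual diagonality with the confocal-conic net "forces the required tangent-circle condition, exactly as the Graves--Chasles mechanism does for IC-nets." This expectation is not an argument, and the mechanism you appeal to is not available here: the Ivory-based route used for AC-grids (Remark \ref{touch}, via equal sums of opposite edge lengths) rested on the deformation running through a genuinely \emph{confocal} family, whereas the uniformly scaled ellipsoids \eqref{E31} are explicitly \emph{not} confocal --- which is precisely why Theorem \ref{deformationcircles} lacks the skew-parallelogram property that Theorem \ref{deformation} has. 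One would have to develop a hyperbolic analogue of the Ivory/Graves--Chasles machinery and verify that the diagonal conics of the planar limit are confocal in the appropriate hyperbolic sense; none of this is set up in your proposal.

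The paper closes this gap by an entirely different, computational route: it quotes from Lie circle geometry \cite{BobenkoSchief2017} the criterion \eqref{E32d} that four oriented circles with collinear centres admit an oriented incircle if and only if a $4\times4$ determinant vanishes, substitutes the explicit circle data from \eqref{E32b} (centres $\sqrt{(a-b)/(b-c)}\cos\beta$ on the axis, radii $\sin\beta$) to obtain \eqref{E32e}, and then verifies the determinant identity for the symmetric sampling $\alpha_{\pm}=\alpha_0\pm\delta$, $\beta_{\pm}=\beta_0\pm\delta$ of \eqref{E32f} --- with the essential subtlety, absent from your proposal, that the identity only holds for the orientation pattern $\epsilon_1=-\epsilon_2=-\epsilon_3=\epsilon_4$ (this is also why the HIC-net definition speaks of \emph{pairs} of inscribed circles). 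Note, finally, that your concluding converse --- that \emph{any} HIC-net can be normalised and realised as the planar limit of a CC-grid --- overreaches: the paper claims only that CC-grids degenerate to \emph{particular} HIC-nets (indeed it remarks that other HIC-nets arise from CC-grids on one- or two-sheeted hyperboloids, with different conics of tangency), so this converse is neither needed for the corollary nor supported as stated.
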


\begin{figure}
  \centering
  \includegraphics[scale=0.18]{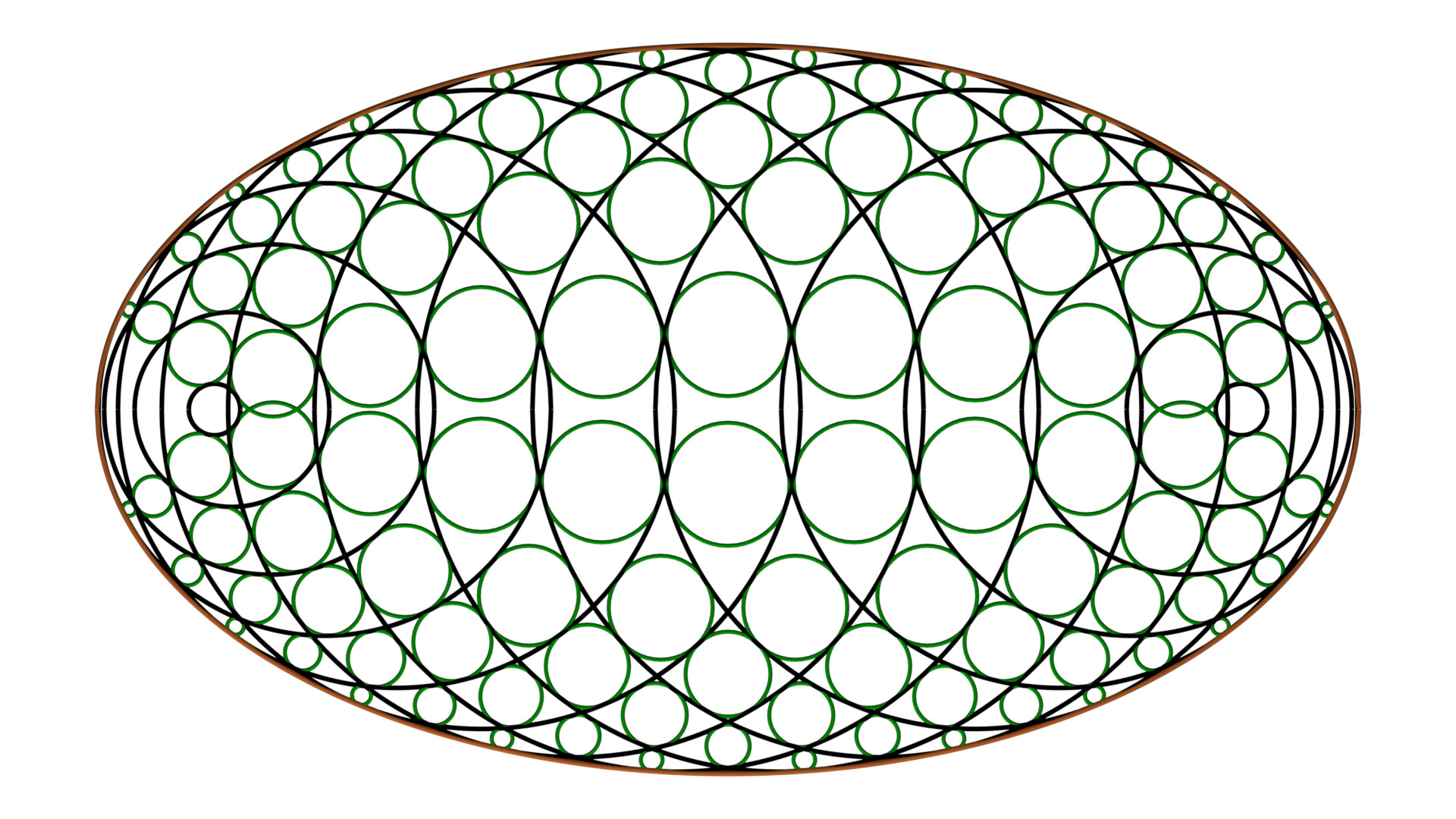}
  \caption{The constituent circles of a planar CC-grid (HIC-net) and the associated inscribed circles.}
  \label{pic_flatccgrid}
\end{figure}

\begin{proof}
It remains to demonstrate that the quadrilaterals of the planar limits of the CC-grids admit incircles. As an application of Lie circle geometry, one may show \cite{BobenkoSchief2017} that four oriented circles of radii $r_i$ and centres $(x_i,0)$ admit an oriented incircle if and only if 
\bela{E32d}
\left|\bear{cccc} 1 & x_1 & r_1 & x_1^2 - r_1^2\as
                         1 & x_2 & r_2 & x_2^2 - r_2^2\as
                         1 & x_3 & r_3 & x_3^2 - r_3^2\as
                         1 & x_4 & r_4 & x_4^2 - r_4^2
\ear\right| = 0,
\ela
where the orientation of the circles is encoded in the signs of $r_i$. Here, the assumption is made that the two incircles of any triple of circles exist in the sense that these are real. In the case $s_3=0$ and two pairs of circles $\alpha=\mbox{const}$ and $\beta=\mbox{const}$, this translates into
\bela{E32e}
\left|\bear{llll} 1 & \cos\alpha_+ & \epsilon_1\sin\alpha_+ & \cos 2\alpha_+\\
                        1 & \cos\alpha_- & \epsilon_2\sin\alpha_- & \cos 2\alpha_-\\
                        1 & \cos\beta_+ & \epsilon_3\sin\beta_+ & \cos 2\beta_+\\
                        1 & \cos\beta_- & \epsilon_4\sin\beta_- & \cos 2\beta_-
\ear\right| = 0,
\ela
where $\epsilon_i^2=1$, by virtue of \eqref{E32b} and its symmetric counterpart. Now, any quadrilateral of a planar CC-grid is composed of circles labelled by
\bela{E32f}
  \alpha_{\pm} = \alpha_0 \pm \delta,\quad \beta_{\pm} = \beta_0 \pm \delta
\ela
so that it may be verified that \eqref{E32e} is indeed satisfied for any choice of $\alpha_0,\beta_0$ and $\delta$. Here, the orientation of the circles has to be chosen such that $\epsilon_1=-\epsilon_2=-\epsilon_3=\epsilon_4$. 
\end{proof}

\begin{remark}
In \cite{HCV52}, it is asserted that one may construct a deformable model of an ellipsoid by realising arbitrary samplings of circular sections as a collection of ``rings'' (or ``slitted'' disks) which are fastened at the points of intersection
(see Figure \ref{circular_wien}). 
\begin{figure}
  \centering
  \includegraphics[scale=0.294]{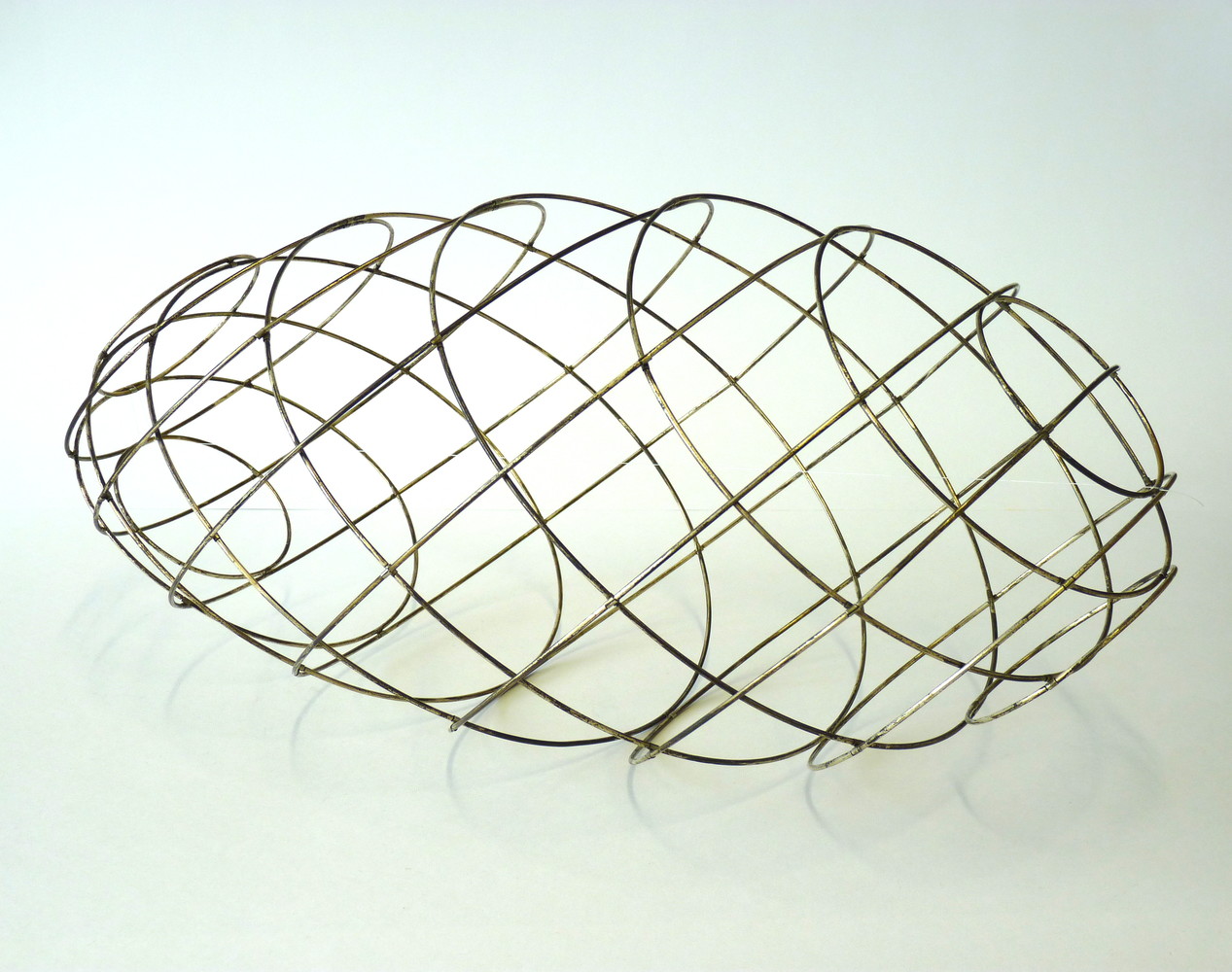}
  \includegraphics[scale=0.294]{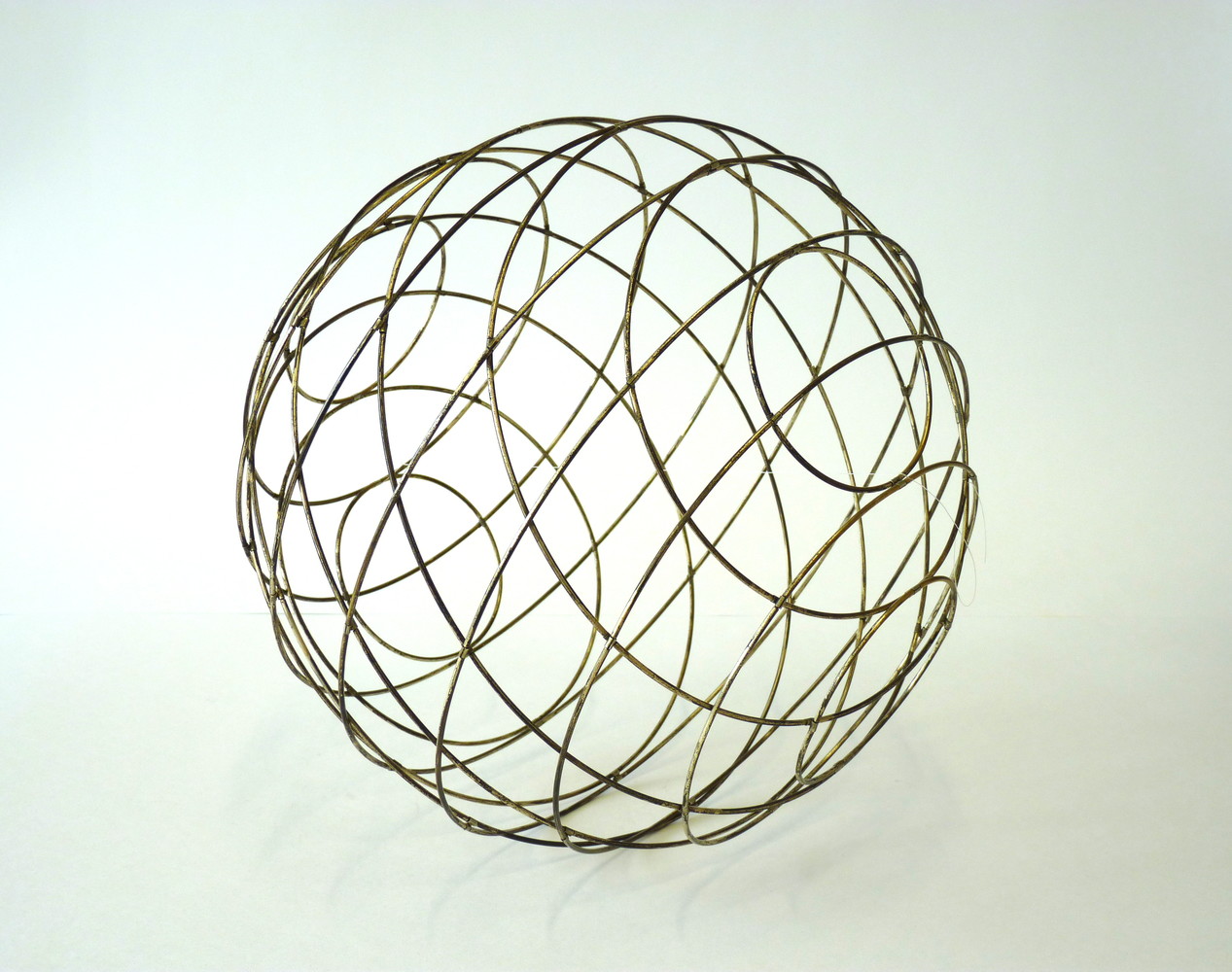}
  \includegraphics[scale=0.294]{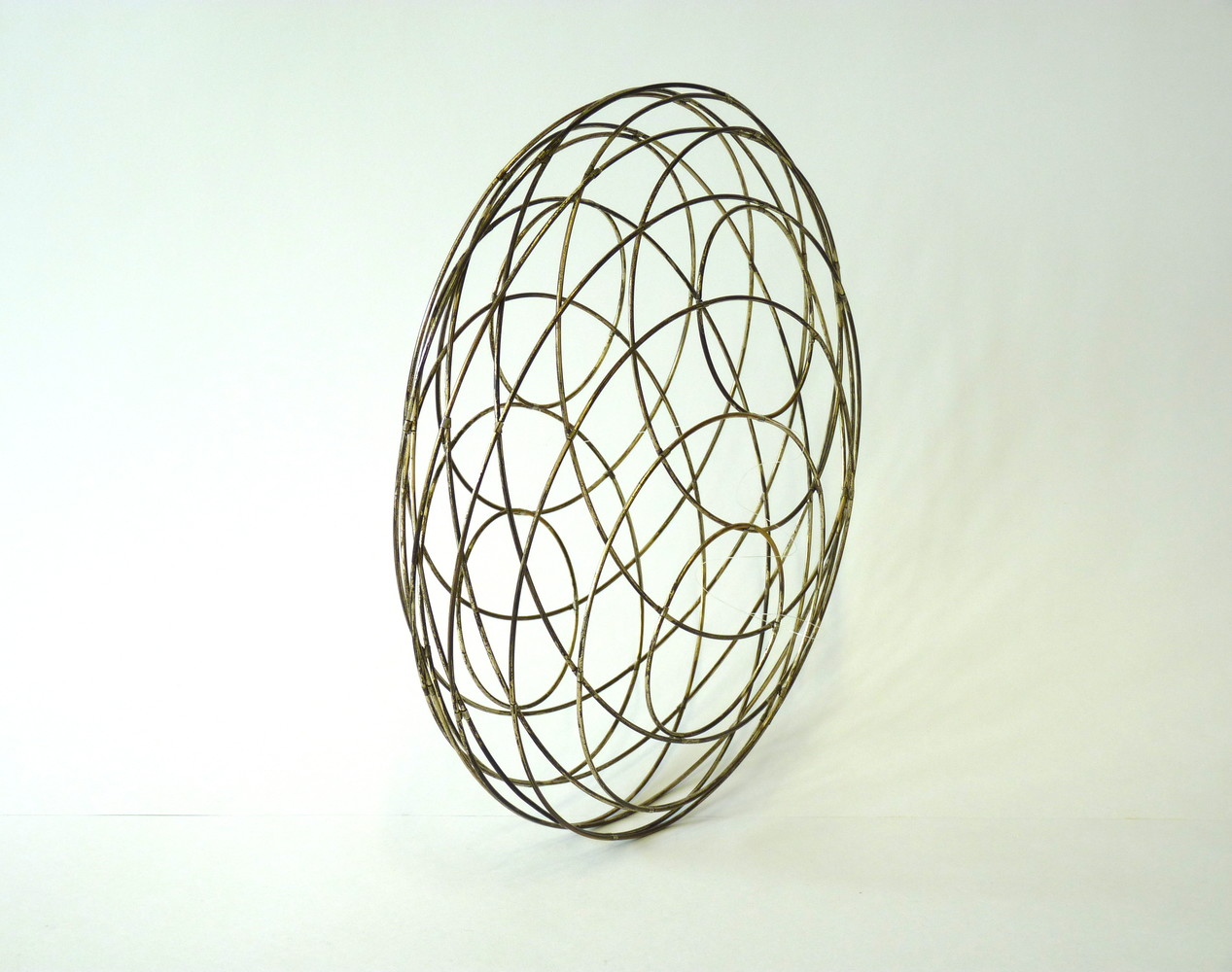}
  \caption{Photos of an ``isometrically'' deformable model of an ellipsoid from the collection of the Institute of Discrete Mathematics and Geometry, TU Wien.}
  \label{circular_wien}
\end{figure}
In particular, the special cases of planar and spherical configurations which occur during the deformation are mentioned. The discussion in the preceding provides an explicit representation and proof of this assertion. It also establishes the existence of CC-grids as privileged samplings. In Figure \ref{pic_deformell}, various stages in the deformation of a CC-grid are displayed. Figure \ref{pic_deformell} (top-left) and Figure \ref{pic_deformell} (bottom-right) constitute HIC-nets, while Figure \ref{pic_deformell} (bottom-left) represents a spherical CC-grid. It is observed that HIC-nets may also be obtained via planar degenerations of CC-grids on one- or two-sheeted hyperboloids with the conics of tangency being ellipses or hyperbolae.

\begin{figure}
  \centering
  \includegraphics[scale=0.13]{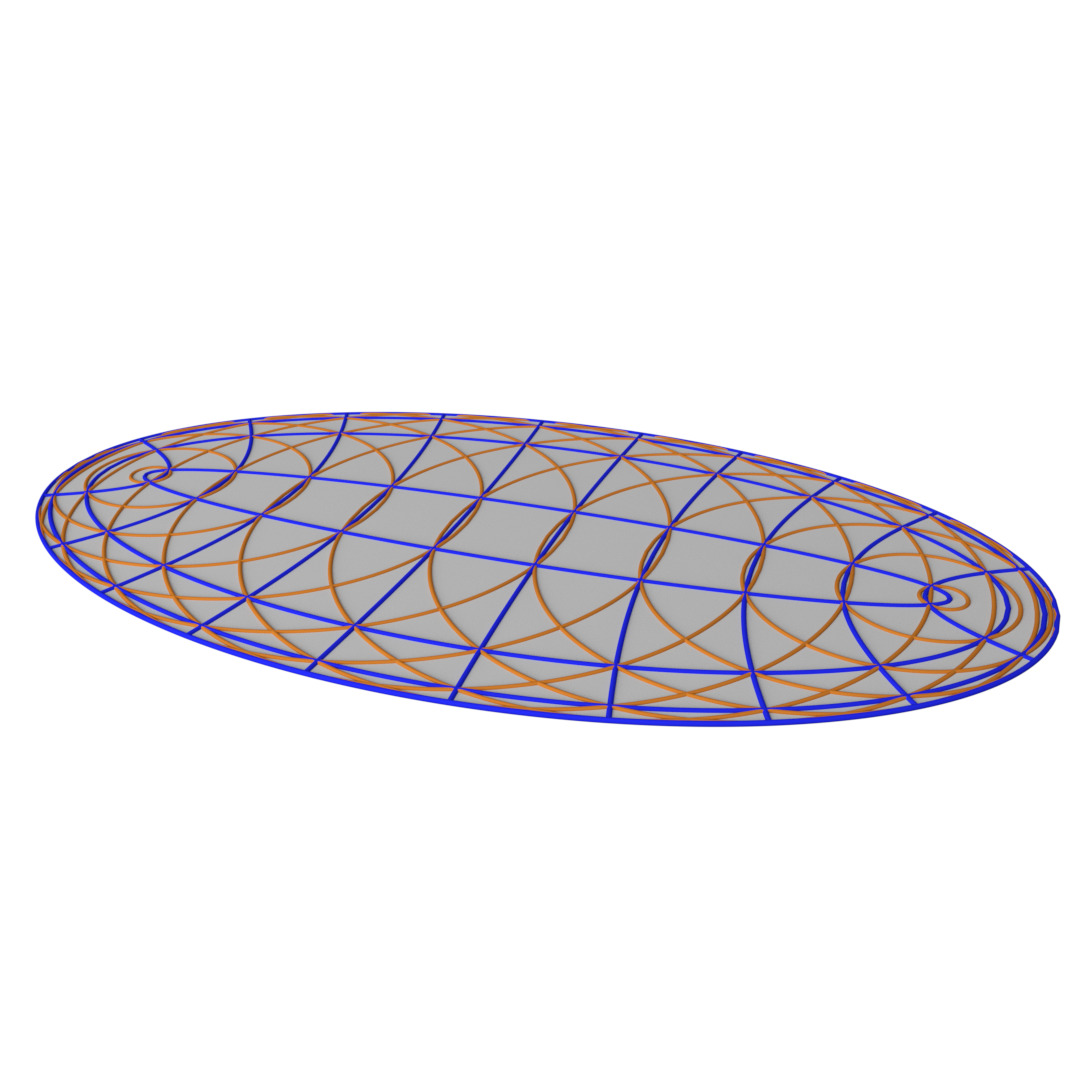}
  \includegraphics[scale=0.13]{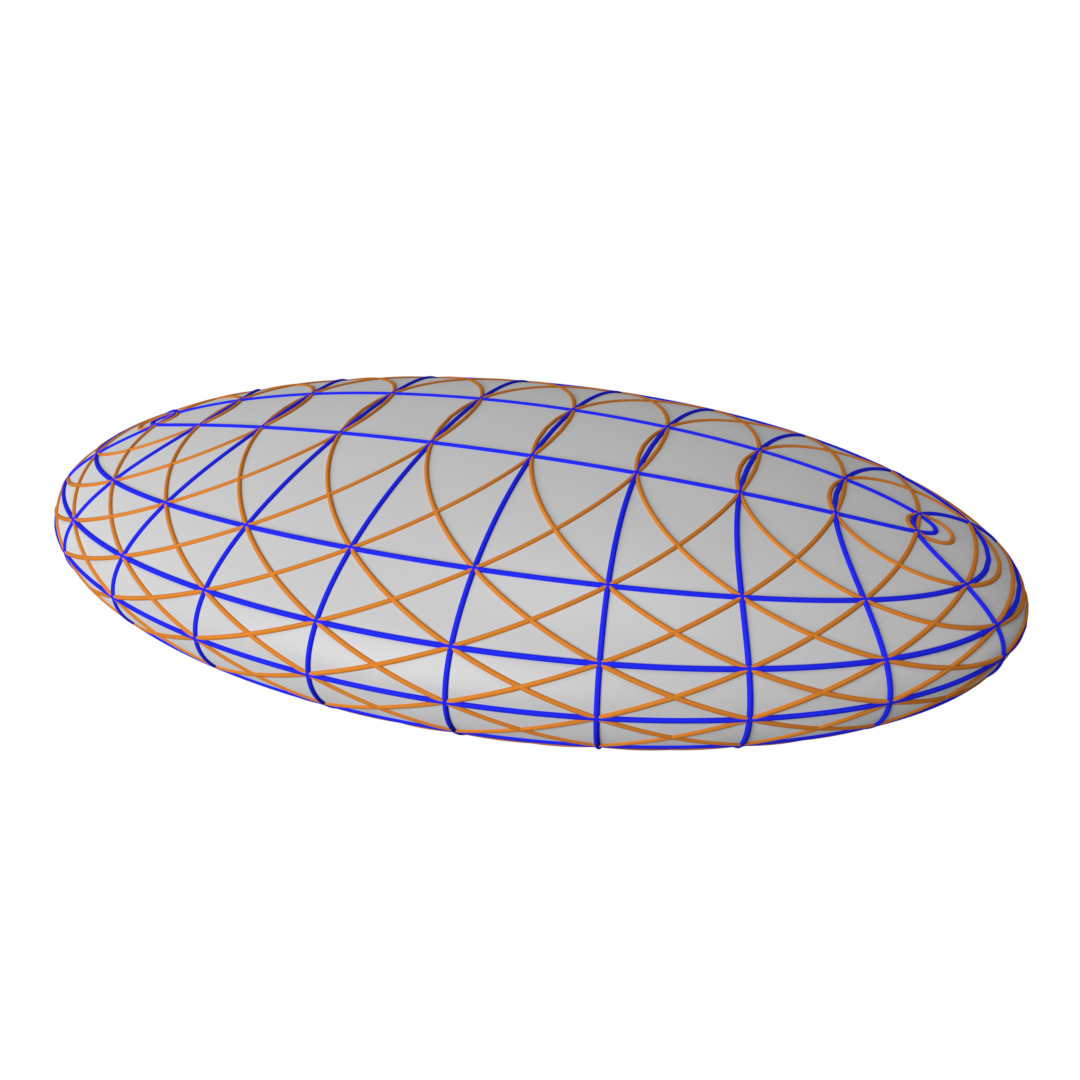}
  \includegraphics[scale=0.13]{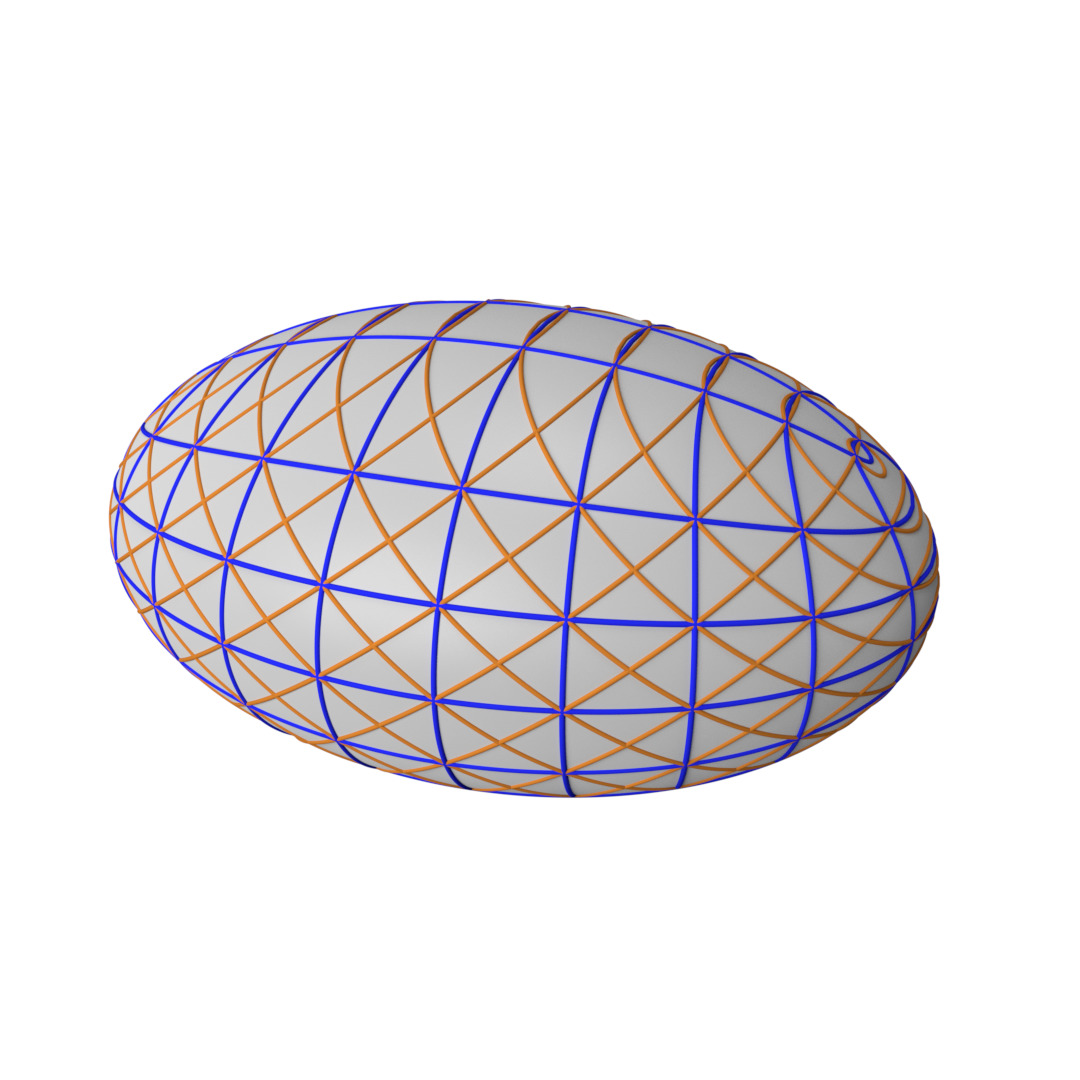}
  \includegraphics[scale=0.13]{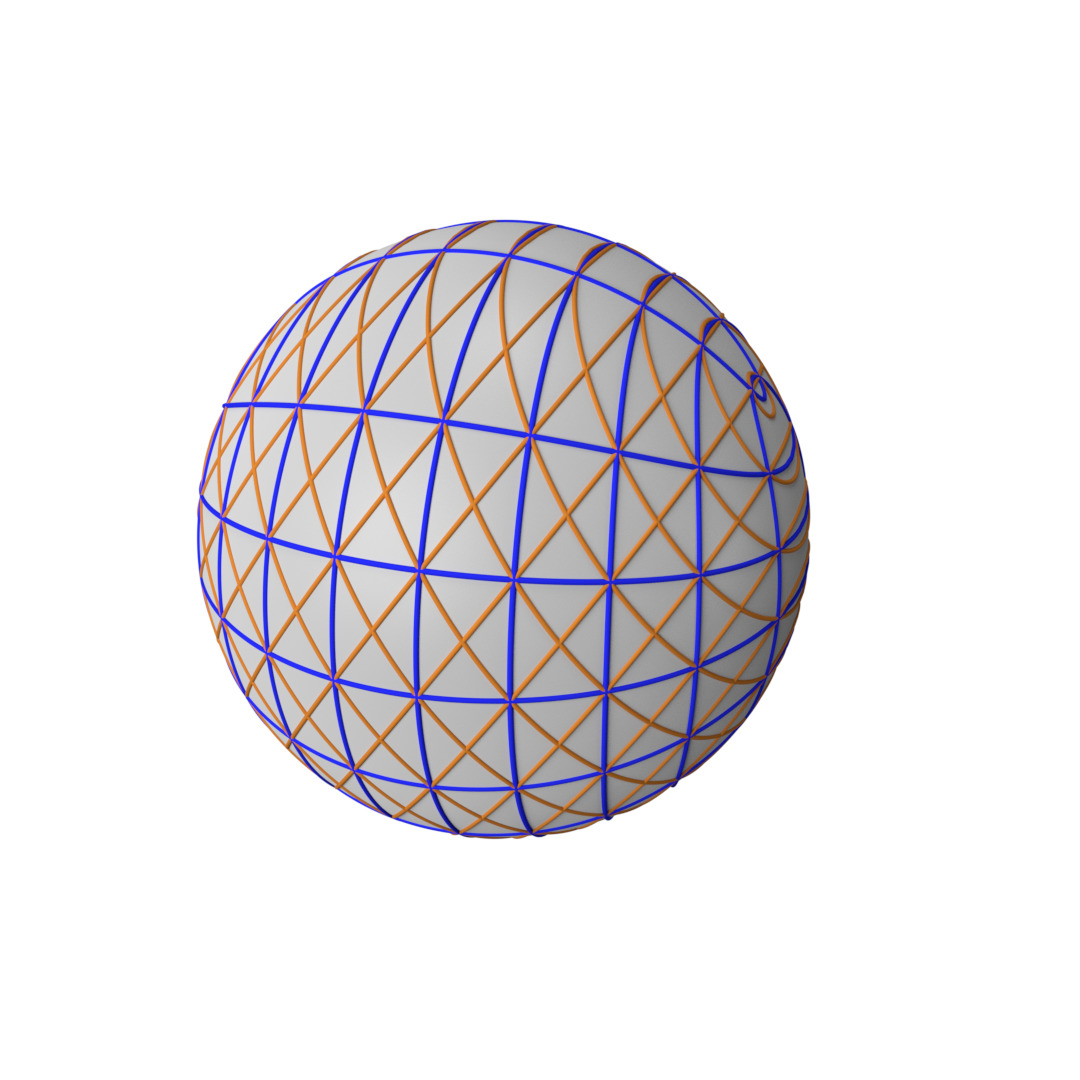}
  \includegraphics[scale=0.13]{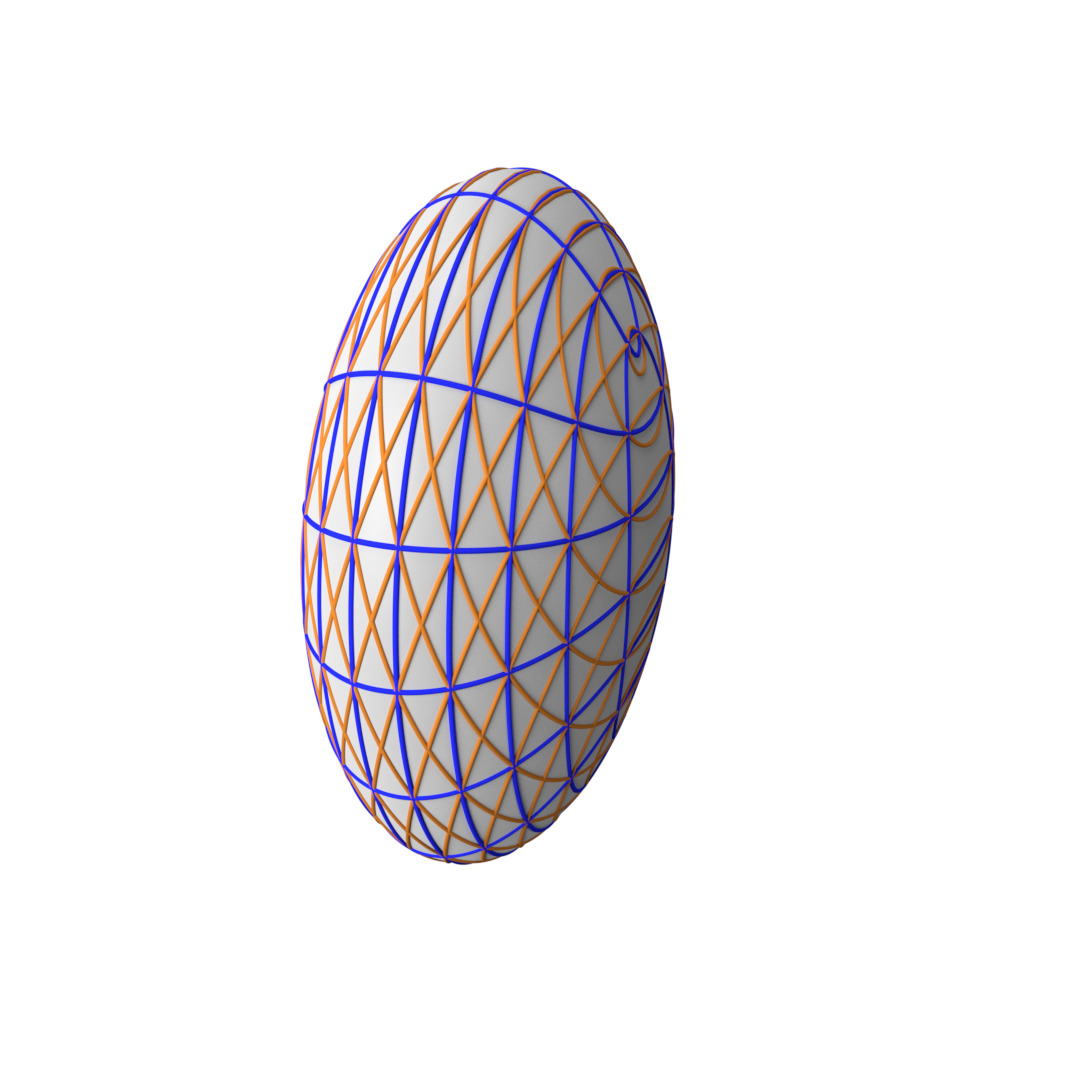}
  \includegraphics[scale=0.13]{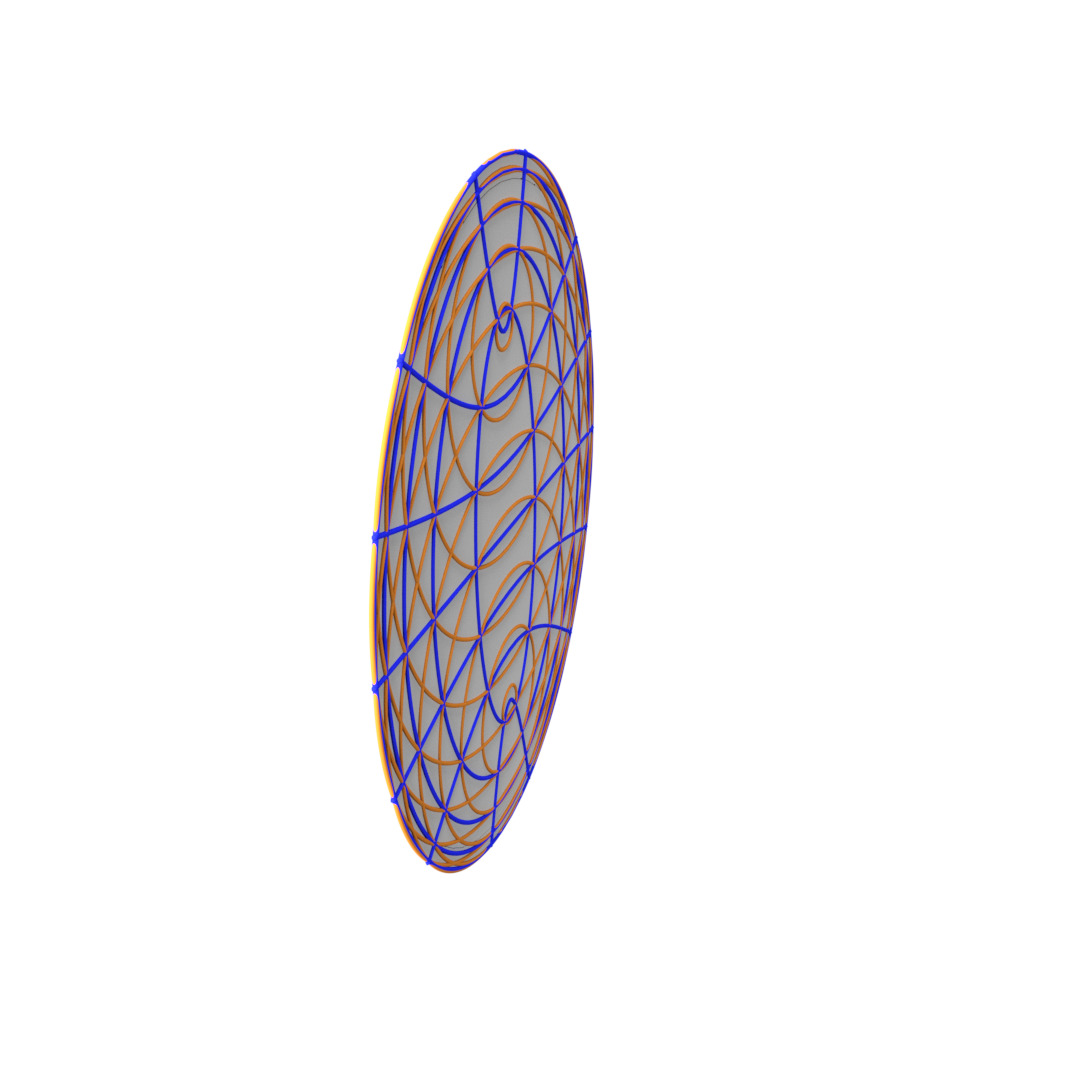}
  \caption{The deformation of a CC-grid on an ellipsoid. The CC-grid in the bottom-left image is spherical.}
  \label{pic_deformell}
\end{figure}
\end{remark}


\section{Characteristic lines on (confocal) ellipsoids and two-sheeted hyperboloids}

At each point of a surface, there exists a one-parameter family of pairs of conjugate directions of which one pair is privileged. The following defining properties of so-called {\em characteristic (conjugate) directions} are equivalent \cite{Eisenhart1960} and the {\em characteristic lines} are the integral curves of the characteristic direction fields.

\begin{itemize}
\item The angle between the conjugate directions is minimal or, equivalently, maximal.
\item Curvature lines bisect the conjugate directions.
\item The normal curvatures in the conjugate directions coincide.
\end{itemize}

In the same manner that asymptotic lines on a surface of negative Gaussian curvature are real and unique, characteristic lines are likewise real and unique on surfaces of positive Gaussian curvature.
Since, according to Theorem \ref{parametrisation}, the lines of curvature on the ellipsoids and two-sheeted hyperboloids of a confocal system of quadrics may be simultaneously parametrised in such a manner that the associated second fundamental forms are conformally flat, that is,
\bela{E32}
  \mbox{\rm II}_{12} \sim ds_1^2 + ds_2^2,\quad \mbox{\rm II}_{23} \sim ds_2^2 + ds_3^2,
\ela
it is evident that the lines $s_1\pm s_2=\mbox{const}$ and $s_2\pm s_3=\mbox{const}$ are conjugate on the surfaces $s_3=\mbox{const}$ and $s_1=\mbox{const}$ respectively. Moreover, due to the orthogonality of the lines of curvature, (the proof of) Theorem \ref{bisection} implies that the lines of curvature bisect these conjugate lines. Hence, the conjugate lines are characteristic and are diagonal to the lines of curvature as illustrated in Figure \ref{pic_charell}. As in the case of the asymptotic lines on the confocal one-sheeted hyperboloids, the characteristic lines on the ellipsoids and two-sheeted hyperboloids form infinitesimal rhombi. The class of surfaces for which the lines of curvature are isothermal-conjugate has been analysed in great detail in \cite{Eisenhart1903,Young1917}.

\begin{figure}
  \centering
  \includegraphics[scale=0.13]{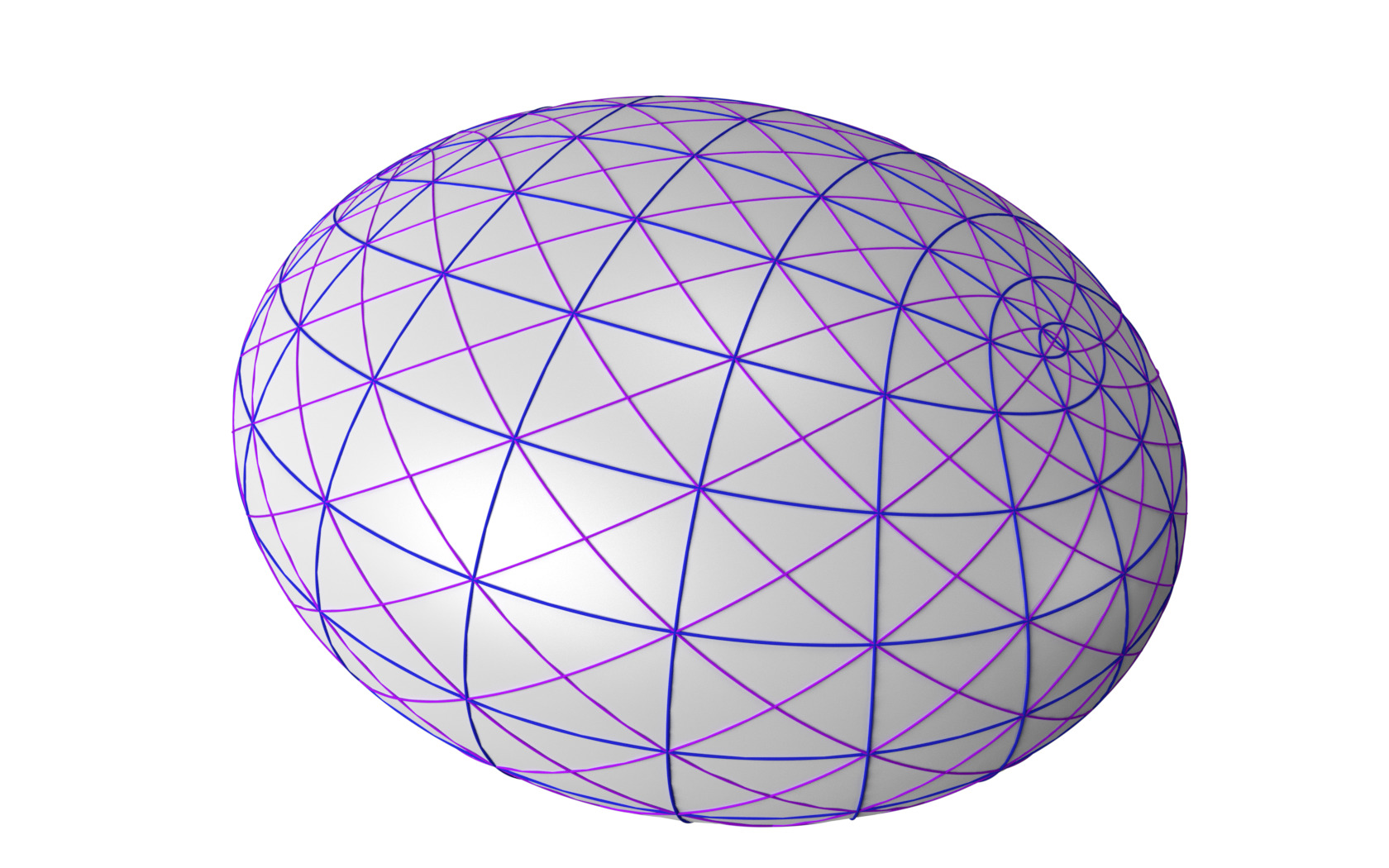}
  \includegraphics[scale=0.13]{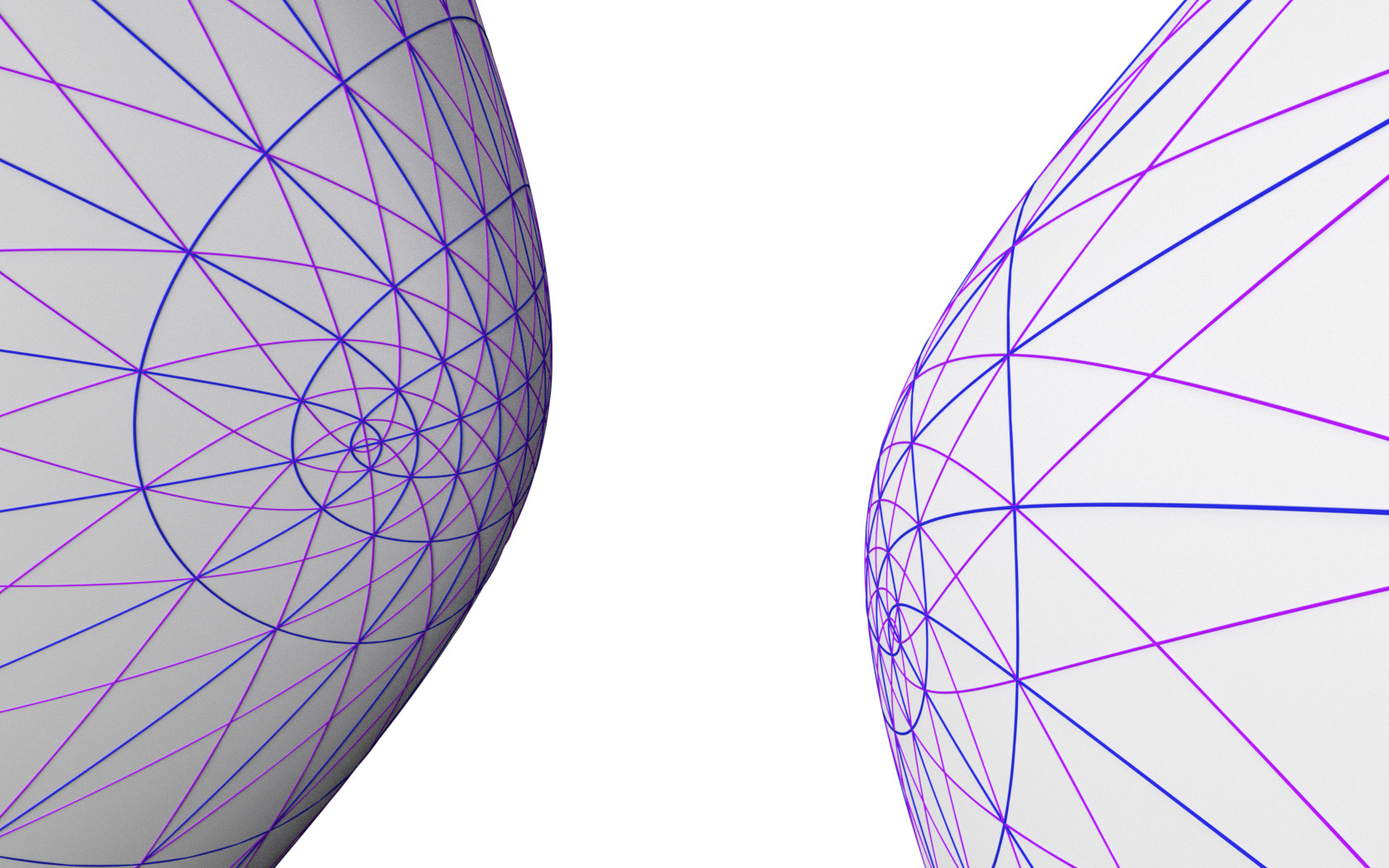}
  \caption{The mutually diagonal lines of curvature and characteristic lines on quadrics.}
  \label{pic_charell}
\end{figure}

The analogy between asymptotic lines and characteristic lines extends to all relevant properties discussed in Section 3 except for those related to the isometry along asymptotic lines. Indeed, one may directly verify that the characteristic lines on, for instance, the one-parameter family of ellipsoids are not isometrically mapped onto each other. Here, we focus on the relation between asymptotic lines, characteristic lines and lines of curvature on all quadrics of a confocal system. In this connection, the classical notion of octahedral webs (German: Achtflachgewebe \cite{B28b}) turns out to be of significance. Here, it is convenient to adopt the following characteristic property of octahedral webs as their definition.

\begin{definition}
Four distinct one-parameter families of surfaces $\{\Sigma_1\},\{\Sigma_2\},\{\Sigma_3\}$ and $\{\Sigma_4\}$ which simultaneously foliate (an open region of) a three-dimensional space form an {\em octahedral web of surfaces} if there exist another three one-parameter families of ``diagonal'' surfaces \mbox{$\{\Sigma_{12}=\Sigma_{34}\}$}, $\{\Sigma_{23}=\Sigma_{14}\}$ and $\{\Sigma_{13}=\Sigma_{24}\}$ such that any pair of surfaces $\Sigma_i,\Sigma_k$, $i<k$ meets in a surface $\Sigma_{ik}$.
\end{definition}

\begin{remark}
The above definition is based on the assumption that each triple of families of surfaces defines a coordinate system in (an open region of) the three-dimensional space. As a consequence, it implies that the surfaces of an octahedral web may be sampled in such a manner that they ``partition'' (an open region of) the three-dimensional space into ``curved'' octahedra and tetrahedra. In combinatorial terms, the collection of vertices of any partition of this type may be regarded as the vertices of an $A_3$-lattice (octahedral-tetrahedral honeycomb lattice).  
\end{remark}

In light of the notion of octahedral webs, the existence of the privileged parametrisation of the lines of curvature on systems of confocal quadrics in terms of the parameters $s_1$, $s_2$ and $s_3$ immediately gives rise to the following remarkable theorem.

\begin{theorem}\label{octahedralwebtheorem}
Let $(u_1,u_2,u_3)$ be arbitrary parameters of a system of confocal quadrics and $(s_1,s_2,s_3)$ be the associated privileged parameters as defined by Theorem \ref{parametrisation}. Then, the following properties hold.
\begin{itemize}
\item
The six congruences of asymptotic/characteristic lines on the confocal quadrics  ``commute'' with the three congruences of lines of curvature in the following sense. If one draws two asymptotic/characteristic lines on the quadrics $u_i=u_i^0$ and $u_k=u_k^0$ through a point on the line of curvature $(u_i=u_i^0,u_k=u_k^0)$ then the points of intersection of these lines with two lines of curvature $(u_i=u_i^0,u_l=u_l^0)$ and $(u_k=u_k^0,u_l=u_l^0)$ are linked by an asymptotic/characteristic line on the quadric $u_l=u_l^0$.
\item
The above triangular paths of closed asymptotic and characteristic lines belong to four triples of congruences of asymptotic/characteristic lines. The lines of any such triple of congruences  form a one-parameter family of surfaces. These distinct families of (ruled) surfaces form an octahedral web and are represented by
\bela{E33}
 \begin{split}
  s_1 + s_2 + s_3 &= \mbox{const},\quad s_1 + s_2 - s_3 = \mbox{const}\\
  s_1 - s_2 + s_3 &= \mbox{const},\quad s_1 - s_2 - s_3 = \mbox{const}
 \end{split}
\ela
and pairwise intersect in the asymptotic/characteristic lines of the corresponding families of quadrics. 
\end{itemize}
\end{theorem}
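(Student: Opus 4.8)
The plan is to run the entire argument in the privileged coordinates $(s_1,s_2,s_3)$ supplied by Theorem \ref{parametrisation}, in which the whole configuration becomes linear. By \eqref{E7} (equivalently \eqref{E32} together with the discussion immediately preceding this theorem) the second fundamental forms of the three families of confocal quadrics are conformally flat, and this pins down the two kinds of distinguished lines explicitly: the asymptotic lines on a one-sheeted hyperboloid $s_2=\mbox{const}$ are the null directions $s_1\pm s_3=\mbox{const}$ of the indefinite form $\mbox{II}_{13}\sim ds_1^2-ds_3^2$ (as already used in the proof of Theorem \ref{diagonal}), while the characteristic lines are the curvature-bisecting families $s_1\pm s_2=\mbox{const}$ on an ellipsoid $s_3=\mbox{const}$ and $s_2\pm s_3=\mbox{const}$ on a two-sheeted hyperboloid $s_1=\mbox{const}$. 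Since $\br(s_1,s_2,s_3)$ restricts to a diffeomorphism onto each octant (the confocal coordinates \eqref{E3} being an orthogonal coordinate system with each $u_i=u_i(s_i)$ monotone), every incidence statement about these lines and the coordinate surfaces may be read off in $s$-space and transported back to $\R^3$ verbatim.

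The decisive observation is that, in $s$-space, each of these six congruences lies on a family of parallel planes $\pm s_1\pm s_2\pm s_3=\mbox{const}$; for instance the asymptotic line $\{s_2=\mbox{const},\,s_1+s_3=\mbox{const}\}$ is cut out on the plane $s_1+s_2+s_3=\mbox{const}$, and similarly for the characteristic lines with the appropriate signs. The first assertion (the ``commutation'' / triangle-closure) is then immediate. Starting from a point $P=(s_1^0,s_2^0,s_3^0)$ on the line of curvature $\{s_i=s_i^0,\,s_k=s_k^0\}$, a chosen asymptotic/characteristic line on $u_i=u_i^0$ and a chosen one on $u_k=u_k^0$ both lie on one common plane $\Pi:\ \varepsilon_1 s_1+\varepsilon_2 s_2+\varepsilon_3 s_3=\mbox{const}$ with $\varepsilon_j\in\{+1,-1\}$ fixed by the two choices. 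Their intersection points $Q_1,Q_2$ with the lines of curvature $\{s_i=s_i^0,\,s_l=s_l^*\}$ and $\{s_k=s_k^0,\,s_l=s_l^*\}$ therefore also lie on $\Pi$; since in addition $s_l=s_l^*$ at both points, $Q_1$ and $Q_2$ lie on the single line $\Pi\cap\{s_l=s_l^*\}$, which is precisely an asymptotic/characteristic line of the quadric $u_l=u_l^0$. I would present the representative case $i,k,l=1,2,3$ with the two ``$+$''-lines explicitly, the remaining sign and index choices being identical up to relabelling.

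For the second assertion I would take the four plane families \eqref{E33} as the candidate web, i.e.\ $\Sigma_1,\dots,\Sigma_4$ with normals $(1,1,1),(1,1,-1),(1,-1,1),(1,-1,-1)$. Each is a foliation by parallel planes, so together they foliate the octant, and it remains only to compute the pairwise intersections. Adding and subtracting the defining equations shows that $\Sigma_1\cap\Sigma_2$ and $\Sigma_3\cap\Sigma_4$ are of the form $\{s_1\pm s_2=\mbox{const},\,s_3=\mbox{const}\}$, hence characteristic lines on the ellipsoids $s_3=\mbox{const}$; that $\Sigma_1\cap\Sigma_3$ and $\Sigma_2\cap\Sigma_4$ are $\{s_1\pm s_3=\mbox{const},\,s_2=\mbox{const}\}$, hence asymptotic lines on the one-sheeted hyperboloids $s_2=\mbox{const}$; and that $\Sigma_1\cap\Sigma_4$ and $\Sigma_2\cap\Sigma_3$ are $\{s_2\pm s_3=\mbox{const},\,s_1=\mbox{const}\}$, characteristic lines on the two-sheeted hyperboloids $s_1=\mbox{const}$. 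Matching the free constants yields exactly the coincidences $\Sigma_{12}=\Sigma_{34}$, $\Sigma_{13}=\Sigma_{24}$, $\Sigma_{14}=\Sigma_{23}$ demanded by the definition, the three diagonal families being nothing but the three families of confocal quadrics. This simultaneously identifies the pairwise intersections with the asymptotic/characteristic lines of the corresponding quadrics, completing the statement.

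The argument is essentially linear once the translation of Step~1 is in place, so there is no substantial analytic obstacle; the real work is the bookkeeping. The point to be careful about is the sign-to-family dictionary: verifying that the three congruences carried by a single plane of \eqref{E33} are exactly one asymptotic family on the one-sheeted hyperboloids together with one characteristic family on each of the ellipsoids and two-sheeted hyperboloids, and that the pairing of these families across the four planes reproduces precisely the three confocal quadric families as diagonals. A secondary matter worth a remark is that the ``surfaces'' of the web are genuinely curved — only the one-sheeted-hyperboloid rulings are straight — so ``ruled'' is used loosely; nonetheless their defining equations in $s$-space are linear, which is all the proof requires.
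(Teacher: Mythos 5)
Your proposal is correct and is essentially the paper's own argument: the paper presents the theorem as an immediate consequence of the privileged parametrisation of Theorem \ref{parametrisation}, and your write-up supplies exactly that reasoning — the identification of the six line congruences with $s_i\pm s_j=\mbox{const}$ via the conformally flat second fundamental forms, the sign bookkeeping showing any two chosen lines through a point lie in a common plane $\varepsilon_1 s_1+\varepsilon_2 s_2+\varepsilon_3 s_3=\mbox{const}$, and the pairwise-intersection computation identifying the diagonal families with the three families of confocal quadrics. One small correction to your closing remark: the web surfaces are genuinely ruled, since each surface $\varepsilon_1 s_1+\varepsilon_2 s_2+\varepsilon_3 s_3=\mbox{const}$ is swept by the one-parameter family of straight asymptotic lines it cuts out on the one-sheeted hyperboloids $s_2=\mbox{const}$, so ``ruled'' is not being used loosely in the theorem.
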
 

The above theorem may be exploited to construct three-dimensional extensions of AC-grids.

\begin{corollary}
Any AC-grid on a one-sheeted hyperboloid (or its degeneration to an IC-net) may be extended to a three-dimensional grid consisting of asymptotic and characteristic lines which represent the lines of intersection of pairs of surfaces of an octahedral web. The two-dimensional subgrid on any quadric of the associated confocal system constitutes an AC-grid or an analogous configuration of characteristic lines. An octahedron, the curved faces of which are bounded by asymptotic/characteristic lines, is displayed in Figure \ref{pic_3dgrid}.

\begin{figure}
  \centering
  \includegraphics[scale=0.16]{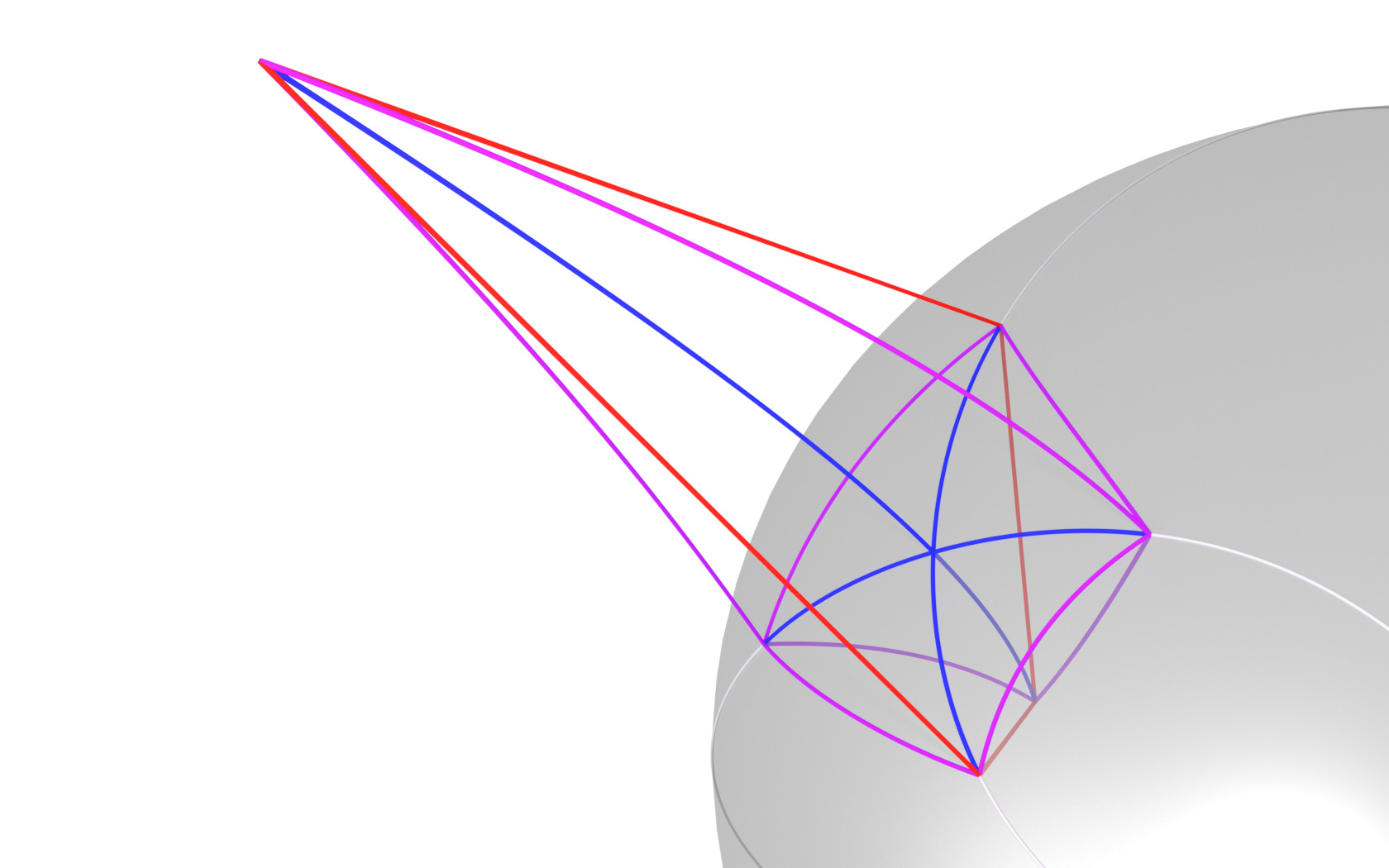}
  \caption{An octahedron of a three-dimensional grid of asymptotic lines and characteristic lines on confocal quadrics. The curves connecting opposite vertices of the octahedron are lines of curvature. The lines of curvature and characteristic/asymptotic lines form pairs of mutually diagonal nets on the
quadrics. Here, this relation is depicted on an ellipsoid.}
  \label{pic_3dgrid}
\end{figure}
\end{corollary}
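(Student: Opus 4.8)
The plan is to carry everything over to the privileged coordinates $(s_1,s_2,s_3)$ of Theorem~\ref{parametrisation} and reduce the assertion to the linear geometry of four families of parallel planes. By \eqref{E7}, the second fundamental forms of the ellipsoids $s_3=\mbox{const}$, the one-sheeted hyperboloids $s_2=\mbox{const}$ and the two-sheeted hyperboloids $s_1=\mbox{const}$ are conformally flat, proportional to $ds_1^2+ds_2^2$, $ds_1^2-ds_3^2$ and $ds_2^2+ds_3^2$ respectively. Reading off the null/characteristic directions, the six congruences of asymptotic and characteristic lines become the level lines
\[
 s_1\pm s_2=\mbox{const}\ \ (s_3=\mbox{const}),\quad
 s_1\pm s_3=\mbox{const}\ \ (s_2=\mbox{const}),\quad
 s_2\pm s_3=\mbox{const}\ \ (s_1=\mbox{const}),
\]
each of which is a straight segment in $s$-space. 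On the one-sheeted hyperboloids the lines $s_1\pm s_3=\mbox{const}$ are moreover genuine straight rulings in $\R^3$, whereas the other four congruences are curved. Since the confocal map \eqref{E3} is a diffeomorphism on each octant and the reparametrisations $u_i=u_i(s_i)$ are monotone, $(s_1,s_2,s_3)\mapsto\br$ is a bona fide coordinate system, so planes in $s$-space correspond to surfaces foliating an open region of $\R^3$.

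For the first bullet I would argue by direct substitution. Let $P$ lie on the curvature line $(u_i=u_i^0,u_k=u_k^0)$, so $s_i=s_i^P$, $s_k=s_k^P$ at $P$, and fix a quadric $u_l=u_l^0$, i.e.\ $s_l=s_l^0$. The line $s_k+s_l=s_k^P+s_l^P$ on $u_i=u_i^0$ meets the curvature line $(u_i=u_i^0,u_l=u_l^0)$ at the point $A$ with $s_k=s_k^P+s_l^P-s_l^0$, and the line $s_i+s_l=s_i^P+s_l^P$ on $u_k=u_k^0$ meets $(u_k=u_k^0,u_l=u_l^0)$ at $B$ with $s_i=s_i^P+s_l^P-s_l^0$. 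Both $A$ and $B$ satisfy $s_i+s_k=s_i^P+s_k^P+s_l^P-s_l^0$ on the quadric $s_l=s_l^0$, hence are joined by the asymptotic/characteristic line $s_i+s_k=\mbox{const}$ there; equivalently, all three edges preserve the signed sum $s_i+s_k+s_l$, so the triangular path lies in a single plane $s_1+s_2+s_3=\mbox{const}$. The remaining sign choices for the two initial lines place the path in one of the other three planes of \eqref{E33}.

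For the second bullet I would identify the four families \eqref{E33} with the planes whose gradients are $(1,\pm1,\pm1)$ and verify the axioms in the definition of an octahedral web. First, any three of these four linear forms are independent (the corresponding $3\times3$ determinant equals $\pm4$), so any three of the families furnish a coordinate system. Next, a short check shows that each plane carries exactly three of the six congruences: for instance $s_1+s_2+s_3=\mbox{const}$ contains $s_1+s_2=\mbox{const}$, $s_1+s_3=\mbox{const}$ and $s_2+s_3=\mbox{const}$, and is ruled by the one-sheeted-hyperboloid lines $s_1+s_3=\mbox{const}$ among them; this yields the four triples of congruences. Finally, computing pairwise intersections, $\{s_1+s_2+s_3\}\cap\{s_1+s_2-s_3\}$ and $\{s_1-s_2+s_3\}\cap\{s_1-s_2-s_3\}$ both reduce to $s_3=\mbox{const}$ (with $s_1+s_2$ resp.\ $s_1-s_2$ constant), so the diagonal family $\{\Sigma_{12}=\Sigma_{34}\}$ is the family of confocal ellipsoids; the analogous reductions give $\{\Sigma_{13}=\Sigma_{24}\}$ and $\{\Sigma_{14}=\Sigma_{23}\}$ as the one- and two-sheeted hyperboloids. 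This is precisely the octahedral web, with the diagonal surfaces being the three families of confocal quadrics, and the pairwise intersections being the claimed asymptotic/characteristic lines.

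The point where I would expect to spend the most care is not any computation but the bookkeeping. I would have to match the abstract definition carefully, in particular recognising the \emph{diagonal} surfaces as the confocal quadrics $s_i=\mbox{const}$ rather than the surfaces \eqref{E33}; I would have to keep track of which of the six congruences are real asymptotic lines and which are characteristic lines, this being forced by the signatures in \eqref{E7} (only the one-sheeted hyperboloid has indefinite second fundamental form, hence negative Gaussian curvature and real asymptotic lines, while the other two quadrics carry characteristic lines); and I would have to justify via the diffeomorphism property of \eqref{E3} that each $\Sigma_i$ is a genuine ruled surface foliating an open region. Everything else is the elementary geometry of the four families of parallel planes with normals $(1,\pm1,\pm1)$.
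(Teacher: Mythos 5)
Your verification in the privileged coordinates $(s_1,s_2,s_3)$ --- reading the six congruences of asymptotic/characteristic lines off the conformally flat second fundamental forms as the lines $s_i\pm s_k=\mbox{const}$, closing the triangular paths by the signed-sum bookkeeping, and identifying the diagonal families of the web \eqref{E33} with the three families of confocal quadrics $s_i=\mbox{const}$ --- is precisely the route the paper takes, which presents Theorem \ref{octahedralwebtheorem} as an immediate consequence of the privileged parametrisation of Theorem \ref{parametrisation} and gives the corollary no separate proof. The only piece you leave implicit is the final sampling step, which is trivial in your linear setup: starting from an AC-grid with vertices $(s_1,s_3)=\delta(n_1+n_3,n_1-n_3)+(s_1^0,s_3^0)$, sample the four families \eqref{E33} with $\lambda$-values in matching arithmetic progressions of step $2\delta$, so that the induced subgrid on each quadric $s_i=\mbox{const}$ consists of lines $s_k\pm s_l=\mbox{const}$ whose elementary quadrilaterals have opposite vertices lying on common curvature lines $s_k=\mbox{const}$ or $s_l=\mbox{const}$, i.e.\ an AC-grid or its characteristic analogue.
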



\section{Conical octahedral grids}

The formal analogy between asymptotic and characteristic lines on surfaces of negative and positive Gaussian curvature respectively may now, in the first instance, be exploited to construct spatial extensions of (checkerboard) IC-nets composed of planes which form three-dimensional octahedral grids. Thus, if we apply the Jacobi imaginary transformations \cite{NIST}
\bela{E34}
  \jac{sn}(u_2,k_2) = -i\jac{sc}(iu_2,k_1),\quad \jac{cn}(u_2,k_2) = \jac{nc}(iu_2,k_1),\quad
  \jac{dn}(u_2,k_2) = \jac{dc}(iu_2,k_1),
\ela
where $k_1^2 + k_2^2 = 1$, then the substitutions $iu_2\rightarrow u_2$ and $iz\rightarrow z$ in the parametrisation \eqref{E14} of systems of confocal quadrics in a three-dimensional Euclidean space leads to the parametrisation
\bela{E35}
  \left(\bear{c}x\\ y\\ z\ear\right) = \left(\bear{c}
     \jac{sn}(s_1,k)\jac{dc}(s_2,k)\jac{ns}(s_3,k)\\ 
     \jac{cn}(s_1,k)\jac{nc}(s_2,k)\jac{ds}(s_3,k)\\
     \jac{dn}(s_1,k)\jac{sc}(s_2,k)\jac{cs}(s_3,k)
 \ear\right)
\ela
of systems of confocal quadrics
\bela{E36}
  \frac{x^2}{\mu+a} + \frac{y^2}{\mu+b} - \frac{z^2}{\mu+c} = 1
\ela
in a three-dimensional Minkowski space $\R^{2,1}$ equipped with the diagonal metric $\operatorname{diag}(1,1,-1)$. Here, for convenience, we have scaled the quadrics in such a manner that $a-c=1$ so that $k^2=a-b< 1$ and, hence, $-a< -b <-c$. The three families $s_i=\mbox{const}$ of quadrics are composed of one-sheeted hyperboloids with corresponding second fundamental forms
\bela{E37}
  \mbox{II}_{ij}\sim ds_i^2 - ds_j^2,\quad i<j
\ela
and the curves $s_i\pm s_j=\mbox{const}$ being the (straight) asymptotic lines on the hyperboloids \mbox{$s_l=\mbox{const}$}. The connection between the parameter $\mu$ and the parameters $s_i$ is given by
\bela{E37a}
  \mu = -a + (a-b)\jac{sn}^2(s_1,k),\quad \mu = -b + (b-c)\jac{nc}^2(s_2,k),\quad \mu = -a + (a-c)\jac{ns}^2(s_3,k)
\ela
respectively. The latter reveals that the family of quadrics $s_1=\mbox{const}$ corresponds to the interval $-a\leq\mu\leq-b$, while the two families of quadrics $s_2=\mbox{const}$ and $s_3=\mbox{const}$ are associated with the same semi-infinite region $\mu \geq -c$. It is noted that the parametrisation \eqref{E35} captures the region of Minkowski space where, at each point, three confocal quadrics (one-sheeted hyperboloids) meet. The intersection of three confocal quadrics is depicted in Figure \ref{confocal_minkowski}.
\begin{figure}
  \centering
  \includegraphics[scale=0.173]{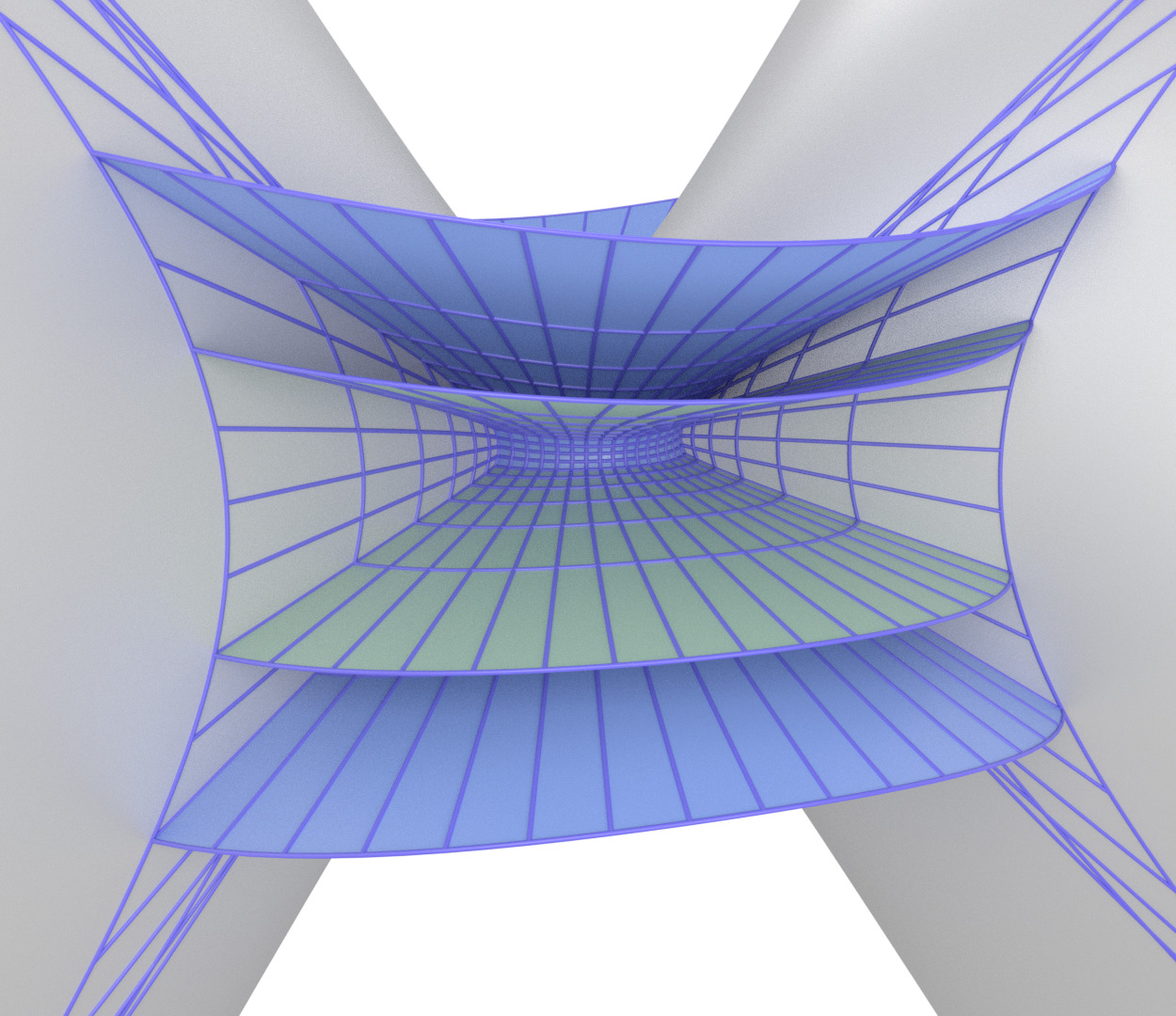}
  \includegraphics[scale=0.173]{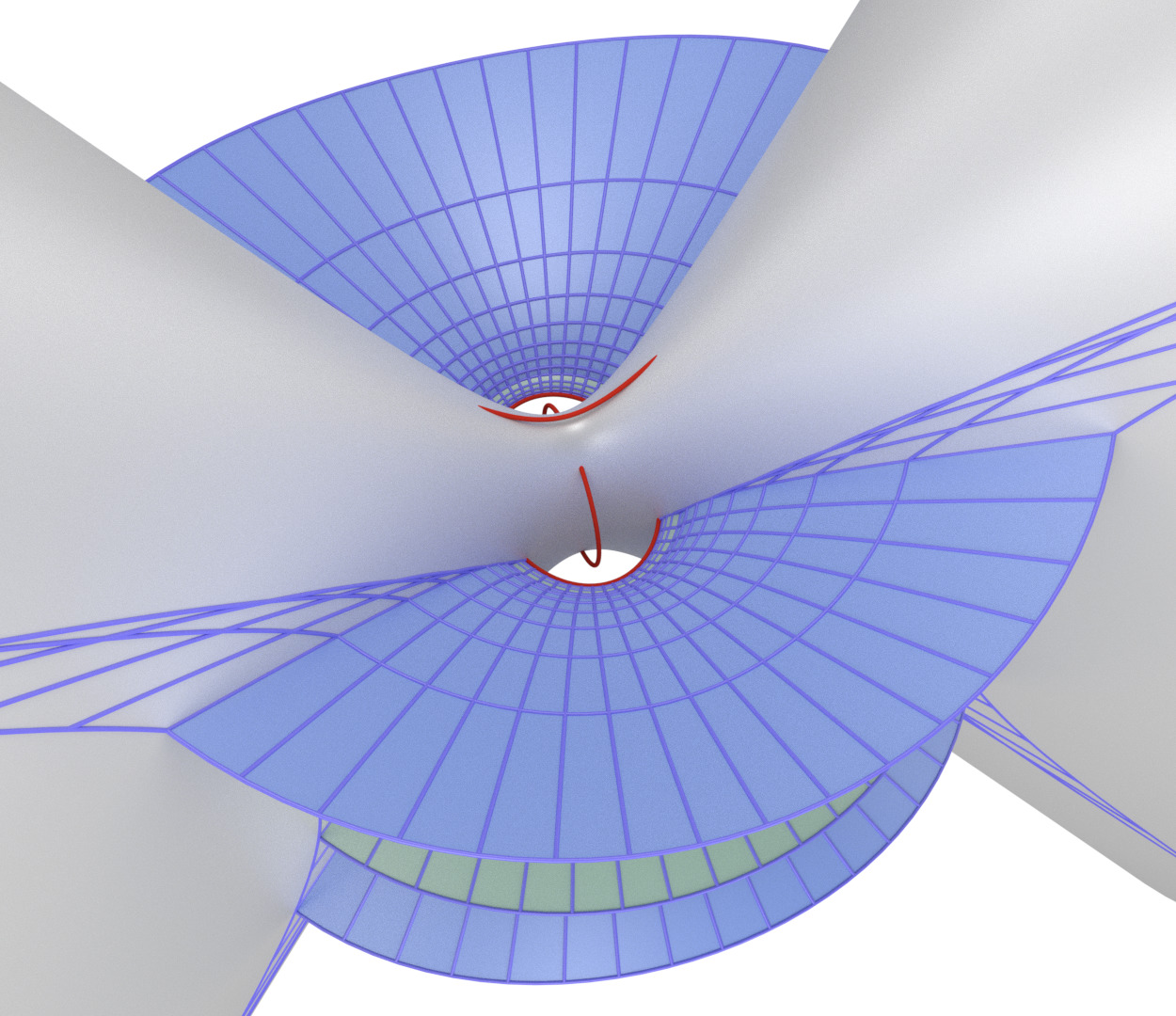}
  \caption{Three confocal one-sheeted hyperboloids in Minkowski space $\R^{2,1}$ and the associated triple of focal conics.}
  \label{confocal_minkowski}
\end{figure} 
In analogy with Theorem \ref{octahedralwebtheorem}, we therefore come to the following conclusion.

\begin{theorem}\label{planarweb}
The confocal coordinate system \eqref{E35} in Minkowski space has the following properties.
\begin{itemize}
\item The four families of surfaces
\bela{E38}
\begin{split}
  s_1 + s_2 + s_3 &= \lambda_1,\quad s_1 + s_2 - s_3 = \lambda_2\\
  s_1 - s_2 + s_3 &= \lambda_3,\quad s_1 - s_2 - s_3 = \lambda_4
 \end{split}
\ela
form an octahedral web and are composed of the planes
\bela{E39}
\begin{split}
  x\jac{cn}(\lambda_1,k) - y\jac{sn}(\lambda_1,k) + z 
  + \jac{dn}(\lambda_1,k) & = 0\\
  x\jac{cn}(\lambda_2,k) - y\jac{sn}(\lambda_2,k) + z 
  - \jac{dn}(\lambda_2,k) & = 0\\
- x\jac{cn}(\lambda_3,k) + y\jac{sn}(\lambda_3,k) + z 
  - \jac{dn}(\lambda_3,k) & = 0\\
- x\jac{cn}(\lambda_4,k) + y\jac{sn}(\lambda_4,k) + z 
  + \jac{dn}(\lambda_4,k) & = 0.
\end{split}
\ela

\item The planes of the octahedral web are tangent to the confocal system of quadrics \eqref{E36} and meet the plane $z=0$ at an angle of 45 degrees.

\item Any four planes related by $\lambda_1 + \lambda_4 = \lambda_2 + \lambda_3$ meet at a point which constitutes the vertex of a circular cone touching the four planes.

\item In the limit $s_2\rightarrow0$, the two families of asymptotic lines $s_1\pm s_3=\mbox{const}$ on the plane $z=0$, parametrised by
\bela{E40}
  \left(\bear{c}x\\ y\ear\right) = \left(\bear{c}
     \jac{sn}(s_1,k)\jac{ns}(s_3,k)\\ 
     \jac{cn}(s_1,k)\jac{ds}(s_3,k)
 \ear\right),
\ela
are tangent to the ellipse
\bela{E41}
  x^2 + \frac{y^2}{1-k^2} = 1.
\ela
\end{itemize}
\end{theorem}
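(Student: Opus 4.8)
The plan is to handle the four bullets in turn, resting the first two on one structural observation plus the computation of a single plane's normal, and deducing the last two from the \emph{null} character of that normal.

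First, the octahedral web structure is obtained exactly as in Theorem \ref{octahedralwebtheorem}: by \eqref{E37} the (straight) asymptotic lines on each hyperboloid $s_l=\mbox{const}$ are $s_i\pm s_j=\mbox{const}$, so the level sets \eqref{E38} pairwise intersect in asymptotic lines and their diagonal incidences hold verbatim. The genuinely new point is that each level set is \emph{planar}, and I would prove this with no calculation. Through a generic point of the surface $\{s_1+s_2+s_3=\lambda_1\}$ pass three of its straight generators, namely the asymptotic lines carried by the three hyperboloids $s_3=\mbox{const}$, $s_1=\mbox{const}$ and $s_2=\mbox{const}$, with tangent directions $\br_{s_1}-\br_{s_2}$, $\br_{s_2}-\br_{s_3}$ and $\br_{s_1}-\br_{s_3}$. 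Since the coordinate system is regular, $\br_{s_1},\br_{s_2},\br_{s_3}$ are independent, so these three directions are pairwise distinct and all lie in the (two-dimensional) tangent plane. A nonzero second fundamental form has at most two null directions; three distinct asymptotic directions therefore force $\mbox{II}\equiv 0$, and a surface with vanishing second fundamental form lies in an affine plane.

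Knowing each level set is a plane, I would determine \eqref{E39} either by computing the constant Euclidean normal $(\br_{s_1}-\br_{s_2})\times(\br_{s_2}-\br_{s_3})$ at one point and showing it is proportional to $(\jac{cn}(\lambda_1,k),-\jac{sn}(\lambda_1,k),1)$, fixing the offset $\jac{dn}(\lambda_1,k)$ from that point, or — since planarity is already in hand — simply verifying \eqref{E39} at a few non-collinear points via $s_3=\lambda_1-s_1-s_2$ and the Jacobi addition theorems. The normal $\bn=(\jac{cn}(\lambda_1,k),-\jac{sn}(\lambda_1,k),1)$ is null for the metric $\operatorname{diag}(1,1,-1)$ because $\jac{sn}^2+\jac{cn}^2=1$. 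This at once gives the $45^\circ$ statement (the Euclidean angle with $z=0$ has cosine $1/\sqrt{\jac{cn}^2+\jac{sn}^2+1}=1/\sqrt2$), and it gives tangency to \emph{every} quadric \eqref{E36}: inserting $\bn$ and $p=-\jac{dn}(\lambda_1,k)$ into the plane--quadric tangency condition $(\mu+a)n_1^2+(\mu+b)n_2^2-(\mu+c)n_3^2=p^2$, the $\mu$-terms cancel by nullity and the remainder collapses to $\jac{dn}^2(\lambda_1,k)=1-k^2\jac{sn}^2(\lambda_1,k)$ upon using $a-c=1$, $k^2=a-b$.

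For the third bullet, solving the four equations \eqref{E38} for $(s_1,s_2,s_3)$ shows they are consistent precisely when $\lambda_1+\lambda_4=\lambda_2+\lambda_3$, in which case the solution is unique and its image $V=\br(s_1,s_2,s_3)$ lies on all four planes. By \eqref{E39} each plane's Euclidean normal is $(\pm\jac{cn}(\lambda_i,k),\mp\jac{sn}(\lambda_i,k),1)$, whose horizontal part lies on the unit circle and whose vertical part is $1$; hence every normal makes the same $45^\circ$ angle with the $z$-axis, so the planes are exactly tangent planes through $V$ of the circular cone with vertical axis, half-angle $45^\circ$ and vertex $V$. Finally, as $s_2\to0$ one has $\jac{sn}(s_2,k)\to0$ and $\jac{cn}(s_2,k),\jac{dn}(s_2,k)\to1$, so \eqref{E35} collapses to \eqref{E40} with $z=0$; substituting $s_3=\sigma-s_1$ and using the addition theorems, I would exhibit a relation $A(\sigma)x+B(\sigma)y=1$ with $A^2+(1-k^2)B^2=1$, which is simultaneously straightness and the dual tangency condition for $x^2+y^2/(1-k^2)=1$ (note $1-k^2=b-c$, so \eqref{E41} is the focal ellipse $\tfrac{x^2}{a-c}+\tfrac{y^2}{b-c}=1$); alternatively this follows from the second bullet by continuity, the trace on $z=0$ of a plane tangent to every confocal quadric being tangent to the degenerate focal conic. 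The conceptual steps are clean; the only unavoidable Jacobi addition-theorem labour is pinning down the normal/offset in \eqref{E39} and the linear relation in the last bullet, and I expect the $s_2\to0$ envelope to be the most delicate point, owing to the need to control the poles of $\jac{ns}$ and $\jac{ds}$ along the degenerate quadric.
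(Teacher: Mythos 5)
Your proof is correct, and its skeleton coincides with the paper's: planarity of the level sets \eqref{E38} from the straightness of the curves $s_i\pm s_j=\mbox{const}$, verification of \eqref{E39} via the Jacobi addition theorems, the $45^\circ$ angle from the unit normals $(\pm\jac{cn}(\lambda_m,k),\mp\jac{sn}(\lambda_m,k),1)/\sqrt{2}$, the cone as the vertical cone of opening angle $90^\circ$ at the common point, and the consistency condition $\lambda_1+\lambda_4=\lambda_2+\lambda_3$ from solving \eqref{E38} linearly. You depart from the paper in three worthwhile ways. First, you actually prove the planarity step (three distinct null directions of a binary quadratic form force $\mbox{II}\equiv 0$ pointwise, hence everywhere), whereas the paper merely asserts that a surface carrying three families of straight lines must be a plane. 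Second, for tangency the paper matches \eqref{E39} with the tangent plane \eqref{E43} of the general quadric \eqref{E42} at the point \eqref{E44} and recovers the confocal constraints $A-B=k^2$, $A-C=1$; your route through the dual condition $(\mu+a)n_1^2+(\mu+b)n_2^2-(\mu+c)n_3^2=p^2$, with the $\mu$-terms killed by $n_1^2+n_2^2-n_3^2=0$, is the same algebra reorganised, but it exposes the single mechanism --- Minkowski-nullity of the normals --- that simultaneously yields the $45^\circ$ statement and tangency to the \emph{entire} confocal family, which the paper leaves implicit. Third, for the last bullet the paper reduces to the Euclidean degeneration $s_2=0$ and quotes Theorem \ref{deformation}, whereas your direct dual-conic verification is self-contained and in fact effortless: the $z=0$ trace of \eqref{E39}$_1$ reads $-x\,\jac{cd}(\lambda_1,k)+y\,\jac{sd}(\lambda_1,k)=1$, and $\jac{cd}^2+(1-k^2)\jac{sd}^2=\bigl(\jac{cn}^2+(1-k^2)\jac{sn}^2\bigr)/\jac{dn}^2=1$ since $\jac{dn}^2=1-k^2\jac{sn}^2$, so the pole-control for $\jac{ns},\jac{ds}$ that you flagged as delicate never actually arises. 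Two trivial remarks: in \eqref{E35} the $s_2$-dependence is through $\jac{sc},\jac{nc},\jac{dc}$ rather than $\jac{sn},\jac{cn},\jac{dn}$ (with the same limits as $s_2\to0$), and the paper's phrase that the angle with $z=0$ ``is indeed $\pi/2$'' is a typo for $\pi/4$, consistent with your computation.
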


\begin{proof}
Since the curves $s_i\pm s_j = \mbox{const}$ on the surfaces $s_l=\mbox{const}$ constitute straight lines, each surface $\lambda_m=\mbox{const}$ contains three families of straight lines and, hence, must be a plane. These planes are given by \eqref{E39} which may be verified by insertion of the parametrisation \eqref{E35} into \eqref{E39} and evaluation modulo $\lambda_m=\mbox{const}$ on use of the addition theorems for Jacobi elliptic functions~\cite{NIST}. Since the unit normals to the planes are of the form $(\pm\jac{cn}(\lambda_m,k),\mp\jac{sn}(\lambda_m,k),1)/\sqrt{2}$, their scalar product with $(0,0,1)$ is $1/\sqrt{2}$ so that the angle between the planes and the plane $z=0$ is indeed $\pi/2$. The latter guarantees the existence of a circular cone touching any four planes of the octahedral web which meet at a point. In fact, it is evident that the axes of the cones are perpendicular to the plane $z=0$ and their opening angle is 90 degrees. Moreover, for any given $\lambda_m$, the equations \eqref{E38} have a common solution if and only if $\lambda_1+\lambda_4 = \lambda_2+\lambda_3$, in which case the common solution parametrises the point at which the four corresponding planes meet.

The tangent plane of a quadric
\bela{E42}
  \frac{x^2}{A} + \frac{y^2}{B} - \frac{z^2}{C} = 1
\ela
at some point $(x_0,y_0,z_0)$ is given by
\bela{E43}
  \frac{x_0x}{A} + \frac{y_0y}{B} - \frac{z_0z}{C} = 1.
\ela
If the latter is matched with, for instance, the planes \eqref{E39}$_1$ then it is required that
\bela{E44}
  x_0 = -A\jac{cd}(\lambda_1,k),\quad y_0 =B\jac{sd}(\lambda_1,k),\quad z_0 = C\jac{nd}(\lambda_1,k). 
\ela
The condition for $(x_0,y_0,z_0)$ to be on the quadric \eqref{E42} for all $\lambda_1$ now leads to the two constraints
\bela{E45}
  A - B = k^2,\quad A - C = 1
\ela
which encapsulate the confocal system of quadrics \eqref{E36}. The same line of arguments also applies to the remaining three families of planes \eqref{E39}$_{2,3,4}$ and, hence, the planes of the octahedral web touch all members of the confocal system of quadrics.

Finally, the planar degenerate one-sheeted hyperboloids $s_2=0$ of the Euclidean and Min\-kowskian systems of confocal quadrics coincide so that the ``asymptotic lines'' touch the focal conic \eqref{E41} on the plane $z=0$ as stated in Theorem \ref{deformation}. This concludes the proof of the theorem.
\end{proof}

\begin{remark}
Octahedral webs composed of planes have been investigated in great detail by Sauer \cite{Sauer1925}. In particular, Sauer has demonstrated in a projective setting that all octahedral webs of this type may be parametrised in terms of $\sigma$-functions \cite{NIST}. This is consistent with the occurrence of the Jacobi elliptic functions exploited in the preceding. 
\end{remark}

\subsection{Octahedral grids of planes related to confocal quadrics}

Appropriate sampling of the planes of the above octahedral webs ({\em octahedral grids}) now leads to the above-mentioned connection with {\ICe}s.

\begin{corollary} \label{gridofplanes}
The octahedral grid of planes composed of the subset of the planes \eqref{E38} given by
\bela{E50}
 \lambda_i = \lambda_i^0 + \delta n_i,\quad n_i\in\Z,
\ela
where the ``grid spacing parameter'' $\delta$ is arbitrary and the constants $\lambda_i^0$ are constrained by
\bela{E51}
  \lambda_1^0 + \lambda_4^0 = \lambda_2^0 + \lambda_3^0,
\ela
locally partitions the three-dimensional ambient space into octahedra and tetrahedra. A partition of (part of) the three-dimensional ambient space into a finite number of (non-overlapping) octahedra and tetrahedra (cf.\ Figure \ref{octahedralelliptic}) is obtained by making the choice 
\bela{E52}
 \lambda_i^0=0,\quad \delta=4\frac{\mathsf{K}(k)}{N},\quad N\in\N.
\ela
In this case, the planes of the octahedral grid pairwise meet in lines on the plane $z=0$ which form an \ICe.
\end{corollary}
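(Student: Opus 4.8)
The plan is to derive all three assertions from Theorem~\ref{planarweb} together with the periodicity of the Jacobi elliptic functions, reserving the genuine geometric content for the final incircle claim. First I would dispatch the local partition statement. Since the four families \eqref{E38} form an octahedral web, the Remark accompanying the definition of octahedral webs already guarantees that \emph{any} sampling of the four families partitions a neighbourhood into curved—here genuinely flat—octahedra and tetrahedra, provided the sampled planes meet in the vertices of an $A_3$-lattice. The vertex condition is precisely the third bullet of Theorem~\ref{planarweb}: four planes with indices $\lambda_1,\dots,\lambda_4$ meet at a point iff $\lambda_1+\lambda_4=\lambda_2+\lambda_3$. Substituting the sampling \eqref{E50} turns this into $\lambda_1^0+\lambda_4^0-\lambda_2^0-\lambda_3^0=\delta(n_2+n_3-n_1-n_4)$, so constraint \eqref{E51} makes exactly the quadruples with $n_1+n_4=n_2+n_3$ meet at common points; these are the octahedron centres, and the local partition follows.

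For the finite partition I would invoke periodicity. Each plane \eqref{E39} depends on its index $\lambda_m$ only through $\jac{sn}(\lambda_m,k)$, $\jac{cn}(\lambda_m,k)$ and $\jac{dn}(\lambda_m,k)$, all of which are invariant under $\lambda_m\mapsto\lambda_m+4\mathsf{K}(k)$ (note $2\mathsf{K}$ flips the signs of $\jac{sn}$ and $\jac{cn}$ but not the sign-definite term $z+\jac{dn}$, so $4\mathsf{K}$ is the correct common period). With $\lambda_i^0=0$ and $\delta=4\mathsf{K}(k)/N$ each family therefore produces exactly $N$ distinct planes, the grid closes up, and only finitely many octahedral and tetrahedral cells occur.

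The heart of the matter is the third assertion, which I would establish via the circular-cone property. Restricting to $z=0$ forces $s_2=0$, since in \eqref{E35} the coordinate $z$ is proportional to $\jac{sc}(s_2,k)$, which vanishes only at $s_2=0$. The four families \eqref{E38} then collapse to the two asymptotic families $s_1+s_3=\mbox{const}$ (from $\Sigma_1,\Sigma_3$) and $s_1-s_3=\mbox{const}$ (from $\Sigma_2,\Sigma_4$); by the last bullet of Theorem~\ref{planarweb} these traces are the lines \eqref{E40} tangent to the ellipse \eqref{E41}, so the grid of traces is a $\Z^2$-grid of lines tangent to a common conic. To see that its elementary quadrilaterals circumscribe circles, I would realise the four bounding lines as $z=0$-traces of four planes meeting at a point: for the cell cut out by $s_1+s_3\in\{\delta a,\delta(a+1)\}$ and $s_1-s_3\in\{\delta b,\delta(b+1)\}$, choose the family-$1$ plane $\lambda_1=\delta a$, the family-$3$ plane $\lambda_3=\delta(a+1)$, the family-$2$ plane $\lambda_2=\delta b$ and the family-$4$ plane $\lambda_4=\delta(b+1)$. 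These satisfy $\lambda_1+\lambda_4=\lambda_2+\lambda_3=\delta(a+b+1)$, so by Theorem~\ref{planarweb} they meet at a point $V$ and are in contact with a circular cone of vertical axis and right opening angle with vertex $V$, while their traces on $z=0$ are exactly the four desired bounding lines.

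Finally I would intersect that cone with $z=0$: the section is a circle $C$ centred at the vertical projection of $V$ with radius equal to the height of $V$. Because each of the four planes touches the cone along a single generator, its trace meets $C$ only where that generator pierces $z=0$ and is therefore tangent to $C$; hence all four bounding lines touch the one circle $C$, which is the defining property of an \ICe. The main obstacle I anticipate is not any single computation but the bookkeeping of the degeneration $s_2\to0$: one must check that families $1,3$ (respectively $2,4$) really furnish the two trace families with the correct index matching, and that the circle produced by the cone is the inscribed and not an escribed one. The latter amounts to fixing the circle orientations as in the oriented-incircle determinant \eqref{E32d}, and the cone construction pins this down automatically once $V$ is placed on the correct side of the plane $z=0$.
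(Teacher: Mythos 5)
Your proposal is correct in substance, and since the paper states this result as an immediate corollary of Theorem~\ref{planarweb} without a written proof, you have filled in essentially the intended steps for the first two claims: the concurrency criterion $\lambda_1+\lambda_4=\lambda_2+\lambda_3$ reduces under \eqref{E50}, \eqref{E51} to $n_1+n_4=n_2+n_3$, which is exactly the vertex condition needed for the sampled web to close into the $A_3$-combinatorics of the Remark, and the $4\mathsf{K}(k)$-periodicity of the plane equations \eqref{E39} (your observation that $2\mathsf{K}$ is \emph{not} a period because it flips $\jac{sn},\jac{cn}$ but not the term $z+\jac{dn}$ is the right check) yields $N$ planes per family and hence finiteness. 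Where you genuinely diverge from the paper is the incircle step. The paper's route runs through Section~3: the traces at $z=0$ are the tangent lines \eqref{E40} of the ellipse \eqref{E41} by the last bullet of Theorem~\ref{planarweb}, and such diagonal samplings form an \ICe by Theorem~\ref{diagonal} and Remark~\ref{touch}, i.e.\ ultimately by Ivory's theorem. You instead prove the incircles directly by exhibiting, for each elementary quadrilateral, a concurrent quadruple of planes and intersecting the associated right circular cone with $z=0$ --- this is B\"ohm's cyclographic argument, which the paper invokes only in the reverse direction (extending a given IC-net to an octahedral grid). Your version is self-contained within Section~6 and identifies the incircle explicitly (centre the vertical projection of the grid vertex, radius its height), which also explains the checkerboard picture at $z=\epsilon$; the Ivory route, by contrast, avoids all orientation bookkeeping.

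The loose end you flagged closes cheaply: solving \eqref{E38} for your quadruple $(\lambda_1,\lambda_2,\lambda_3,\lambda_4)=\delta(a,\,b,\,a+1,\,b+1)$ gives $s_2=-\delta/2$ and $(s_1+s_3,\,s_1-s_3)=\delta\left(a+\tfrac12,\,b+\tfrac12\right)$, so the cone vertex sits over the parameter midpoint of the quadrilateral; hence the circle's centre lies inside it and the circle is the incircle, not an excircle (alternatively quote Theorem~\ref{diagonal} with Remark~\ref{touch} at this point). Note that the determinant \eqref{E32d} is the Lie-geometric criterion for circles centred on a common line (HIC-nets) and is not the appropriate tool here; the oriented cone suffices. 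Two small inaccuracies to repair: the concurrency points with $n_1+n_4=n_2+n_3$ are the \emph{vertices} of the octahedra and tetrahedra, not ``octahedron centres''; and $z=0$ in \eqref{E35} also occurs where $\jac{cs}(s_3,k)=0$, not only at $s_2=0$, both families degenerating to the plane at $\mu=-c$ --- though your index matching of the trace families $s_1\pm s_3=\mbox{const}$ with the plane families $(1,3)$ and $(2,4)$ is exactly right, as one verifies from \eqref{E39}: the family-1 and family-3 planes with equal parameters intersect in a line in $z=0$, and likewise families 2 and 4, which is the degenerate (doubled-line) checkerboard structure of the \ICe.
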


\begin{figure}
  \centering
  \includegraphics[scale=0.173]{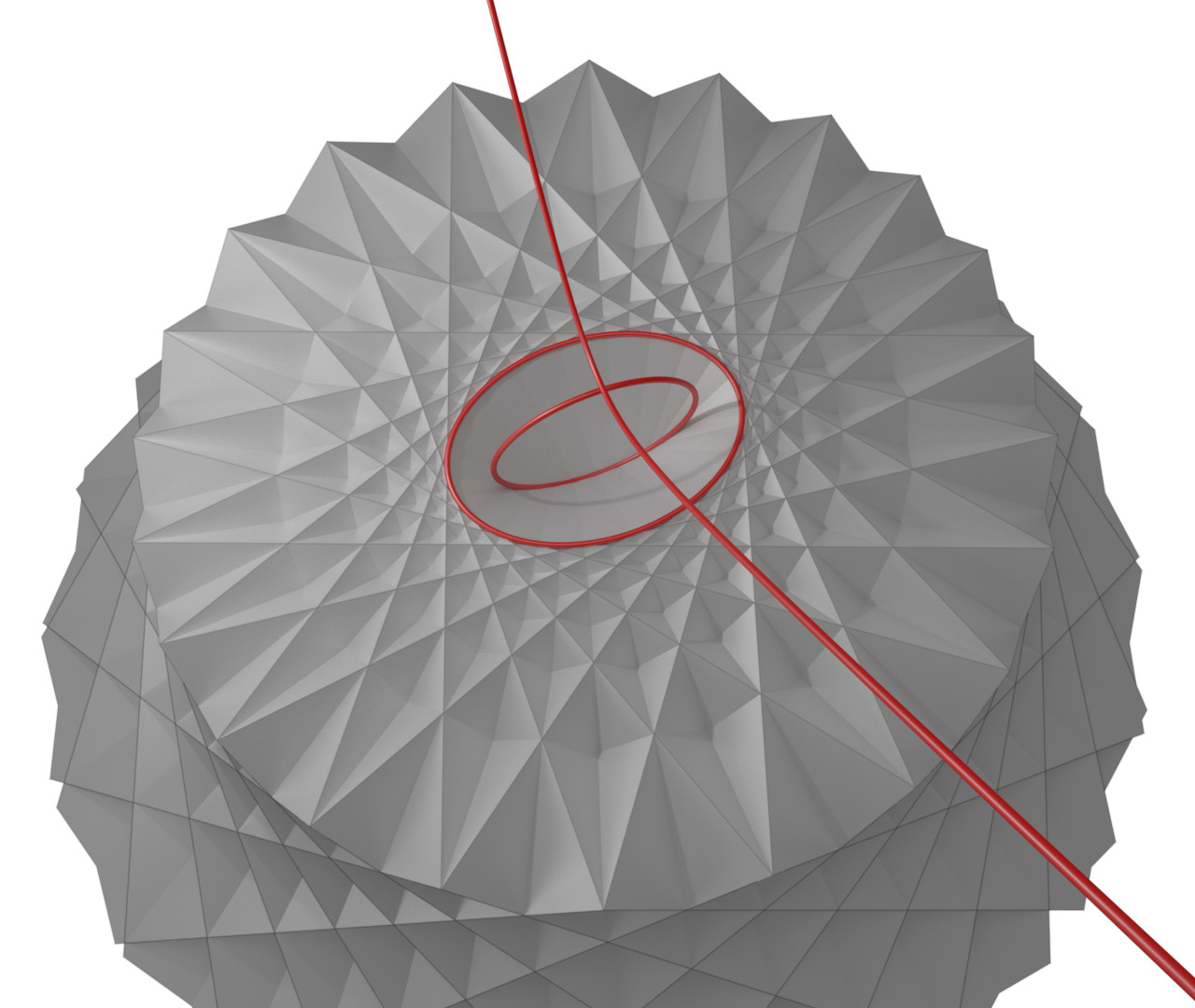}
  \includegraphics[scale=0.173]{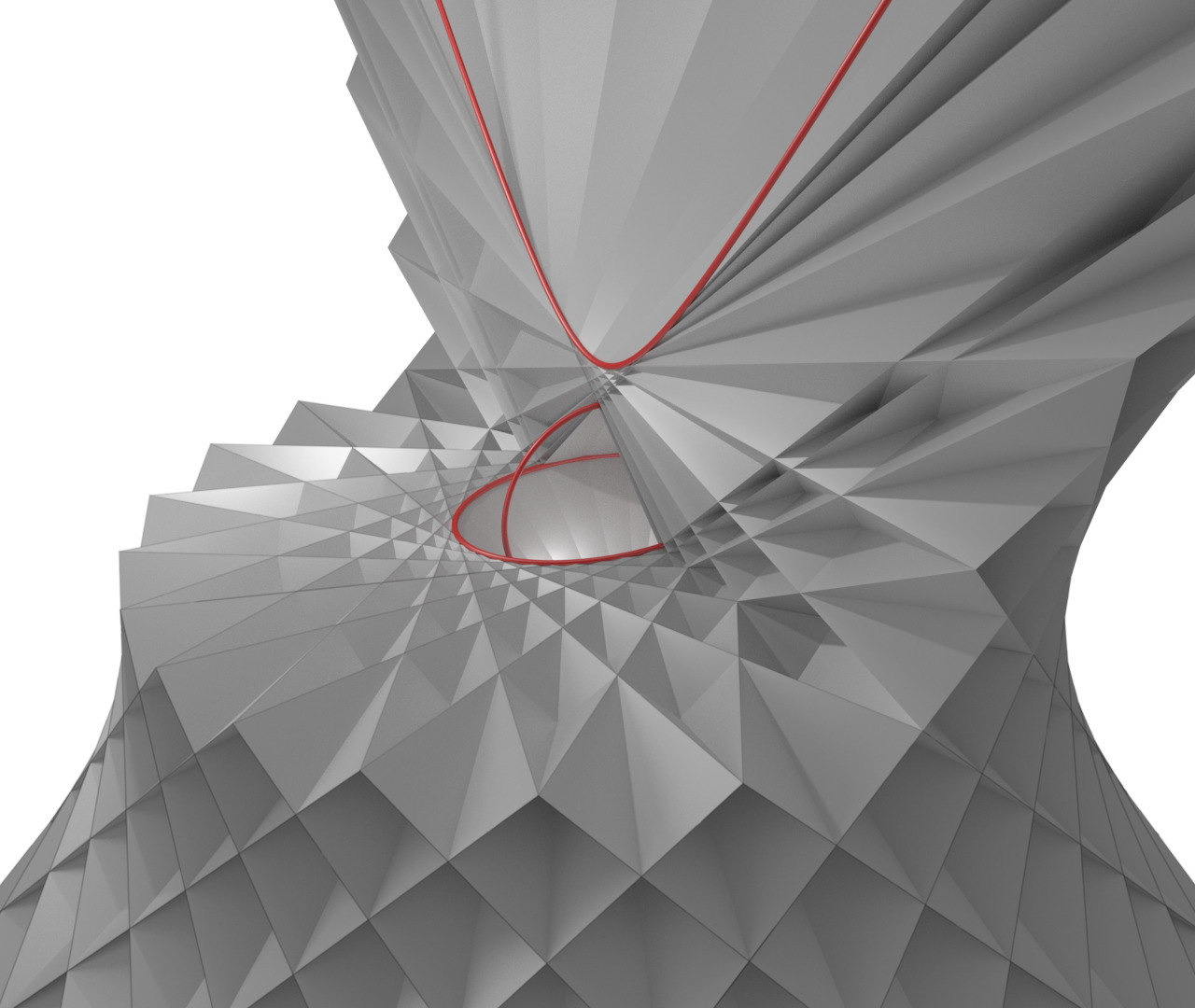}
\caption{Clippings of a three-dimensional octahedral extension of an \ICe and the triple of focal conics of the corresponding system of confocal quadrics. It is observed that each plane touches all focal conics.}
\label{octahedralelliptic}
\end{figure}

\begin{remark}
The octahedral grids constructed via confocal quadrics in Minkowski space turn out to constitute explicit examples of octahedral grids of planes discovered by Sauer via sampling octahedral webs \cite{Sauer1925} in a projective setting.  Moreover, the samplings of the asymptotic lines on the planes of the octahedral grids form hexagonal grids of lines, that is, samplings of classical hexagonal webs composed of lines \cite{B28}. The latter have been examined in great detail in \cite{GrafSauer1924}. 
\end{remark}

It turns out that any IC-net may be extended to an octahedral grid of planes which meet its lines pairwise at an angle of 45 degrees with the plane in which the IC-net is embedded. In fact, B\"ohm \cite{B} exploited this fact in his constructive proof of the existence of IC-nets. The key property of IC-nets on which its spatial extension is based is the fact that the existence of the circles inscribed in the quadrilaterals formed by the lines $\ell_n,\ell_{n+1}$ and $m_{n'},m_{n'+1}$ guarantees that the lines $\ell_n,\ell_{n+k}$ and $m_{n'},m_{n'+k}$ circumscribe circles for any $k\geq 1$. Hence, any quadruple of lines 
$\ell_n,\ell_{n+k}$, $m_{n'},m_{n'+k}$ may be regarded as the intersection of four planes with the plane $z=0$ which are tangent to a circular cone of opening angle $\pi/2$, its cross-section $z=0$ being the incircle of the corresponding quadrilateral. The collection of planes associated with all circles admitted by an IC-net then constitutes an octahedral grid of planes with the vertices of the circular cones being the vertices of the octahedra. 

In the case of {\ICe}s, Corollary \ref{gridofplanes} provides an explicit parametrisation of spatially extended {\ICe}s. Extensions of {\ICh}s are obtained by allowing the modulus $k$ in the parametrisation \eqref{E35} to be greater than 1. Hence, in this connection, it is natural to employ the identities \cite{NIST}
\bela{E53}
  \jac{sn}(s,1/k) = k\jac{sn}(s/k,k),\quad \jac{cn}(s,1/k) = \jac{dn}(s/k,k),\quad \jac{dn}(s,1/k) = \jac{cn}(s/k,k)
\ela
so that the transition $k\rightarrow1/k$ in the parametrisation \eqref{E35} leads to an alternative system of confocal quadrics \eqref{E36} in Minkowski space parametrised by
\bela{E54}
  \left(\bear{c}x\\ y\\ z\ear\right) = \left(\bear{l}
     \jac{sn}(s_1,k)\jac{cd}(s_2,k)\jac{ns}(s_3,k)\\ 
     \jac{dn}(s_1,k)\jac{nd}(s_2,k)\jac{cs}(s_3,k)/k\\
     \jac{cn}(s_1,k)\jac{sd}(s_2,k)\jac{ds}(s_3,k)
 \ear\right).
\ela
As in the elliptic case, the quadrics have been scaled such that $a-c=1$ but, now,  $a-b=1/k^2>1$ and, hence, $-a<-c<-b$ so that the asymptotic lines on the degenerate one-sheeted hyperboloid $s_2=0$ are tangent to the hyperbola
\bela{E55}
  x^2 - \frac{k^2y^2}{1-k^2} = 1
\ela
on the plane $z=0$. The associated collection of planes is readily shown to be given by 
\bela{E56}
\begin{split}
  x\jac{dn}(\lambda_1,k) - yk\jac{sn}(\lambda_1,k) + z 
  + \jac{cn}(\lambda_1,k) & = 0\\
  x\jac{dn}(\lambda_2,k) - yk\jac{sn}(\lambda_2,k) + z 
  - \jac{cn}(\lambda_2,k) & = 0\\
- x\jac{dn}(\lambda_3,k) + yk\jac{sn}(\lambda_3,k) + z 
  - \jac{cn}(\lambda_3,k) & = 0\\
- x\jac{dn}(\lambda_4,k) + yk\jac{sn}(\lambda_4,k) + z 
  + \jac{cn}(\lambda_4,k) & = 0.
\end{split}
\ela
An example of spatially extended \ICh is displayed in Figure \ref{octahedralhyperbolic}.
\begin{figure}
  \centering
  \includegraphics[scale=0.17]{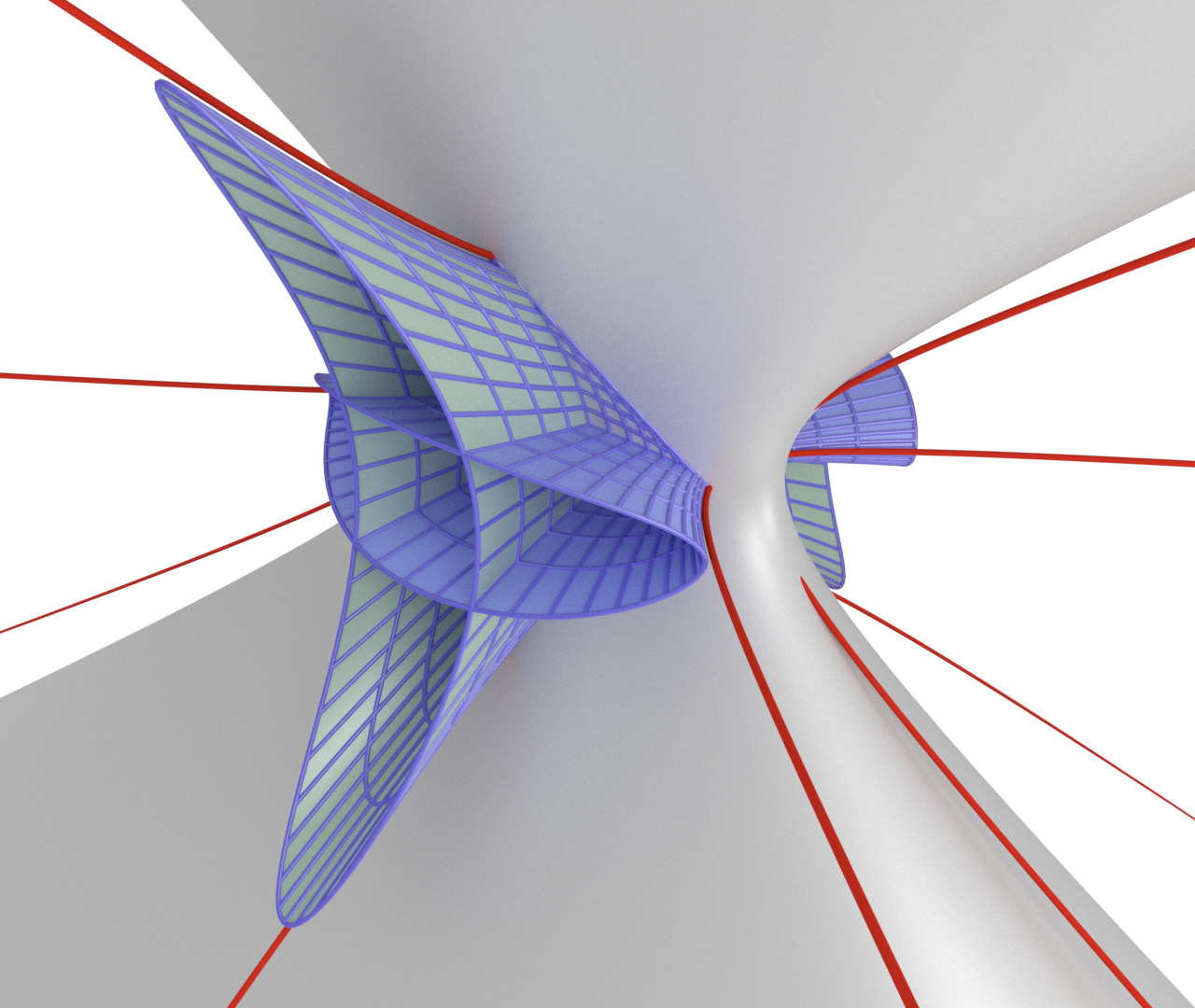}
  \includegraphics[scale=0.17]{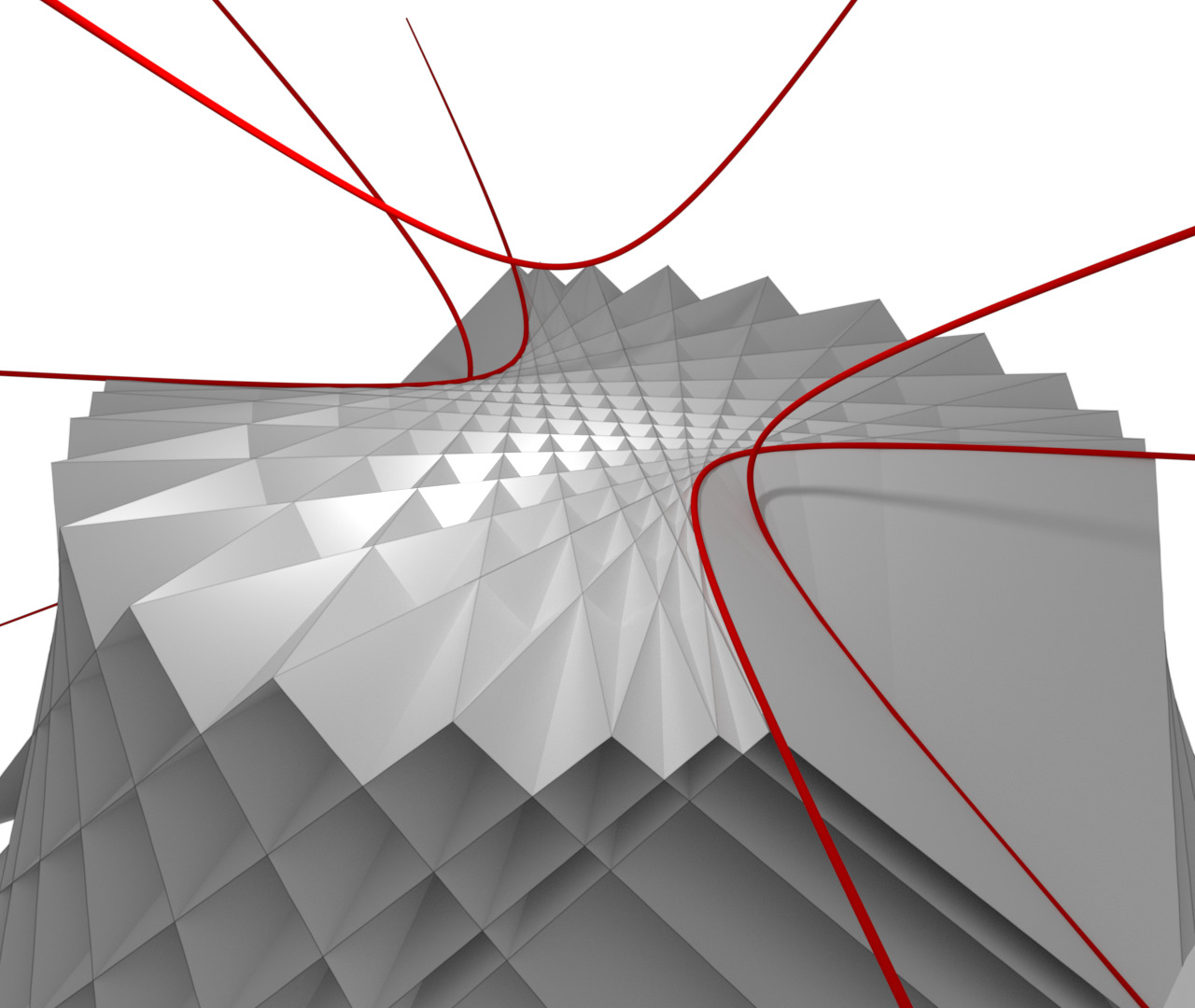}
  \caption{Left: Three confocal one-sheeted hyperboloids in Minkowski space $\R^{2,1}$ and the associated triple of focal conics. Right: A clipping of a three-dimensional octahedral extension of an \ICh.}
  \label{octahedralhyperbolic}
\end{figure} 

The connection with {\em checkerboard IC-nets}, that is, planar configurations of oriented lines and circles of the combinatorics of a checkerboard \cite{BST19}, is now obtained as follows. We first recall that any IC-net may be regarded as a degenerate checkerboard IC-net, that is, any line of an IC-net may be interpreted as two coinciding lines of opposite orientation. Since in the cyclographic model of planar Laguerre geometry, oriented lines on the plane $z=0$ are represented by planes which intersect the plane $z=0$ at an angle of 45 degrees (see, e.g., \cite{BS}), the three-dimensional extension of an IC-net is nothing but its representation in the cyclographic model. In fact, the parametrisation of the lines \eqref{E39}$|_{z=0}$ and \eqref{E56}$|_{z=0}$ is consistent with the Laguerre geometric parametrisation of {\ICe}s and {\ICh}s respectively presented in \cite{BST19}.

The degeneration of a checkerboard IC-net to an IC-net may be illustrated by vertically moving the plane of intersection $z=0$ which generates an IC-net from an octahedral grid of planes of the type \eqref{E50}, \eqref{E51}, \eqref{E52}. Indeed, if we consider the planes $z=\epsilon$ and examine the transition from $\epsilon=0$ to $\epsilon\neq0$ then we see that each line of the IC-net on the plane $z=0$ splits into two lines (see Figure \ref{pic_checkerclipping}) and the points of intersection of the lines which are the vertices of circular cones become circles.
\begin{figure}
  \centering
  \includegraphics[scale=0.17]{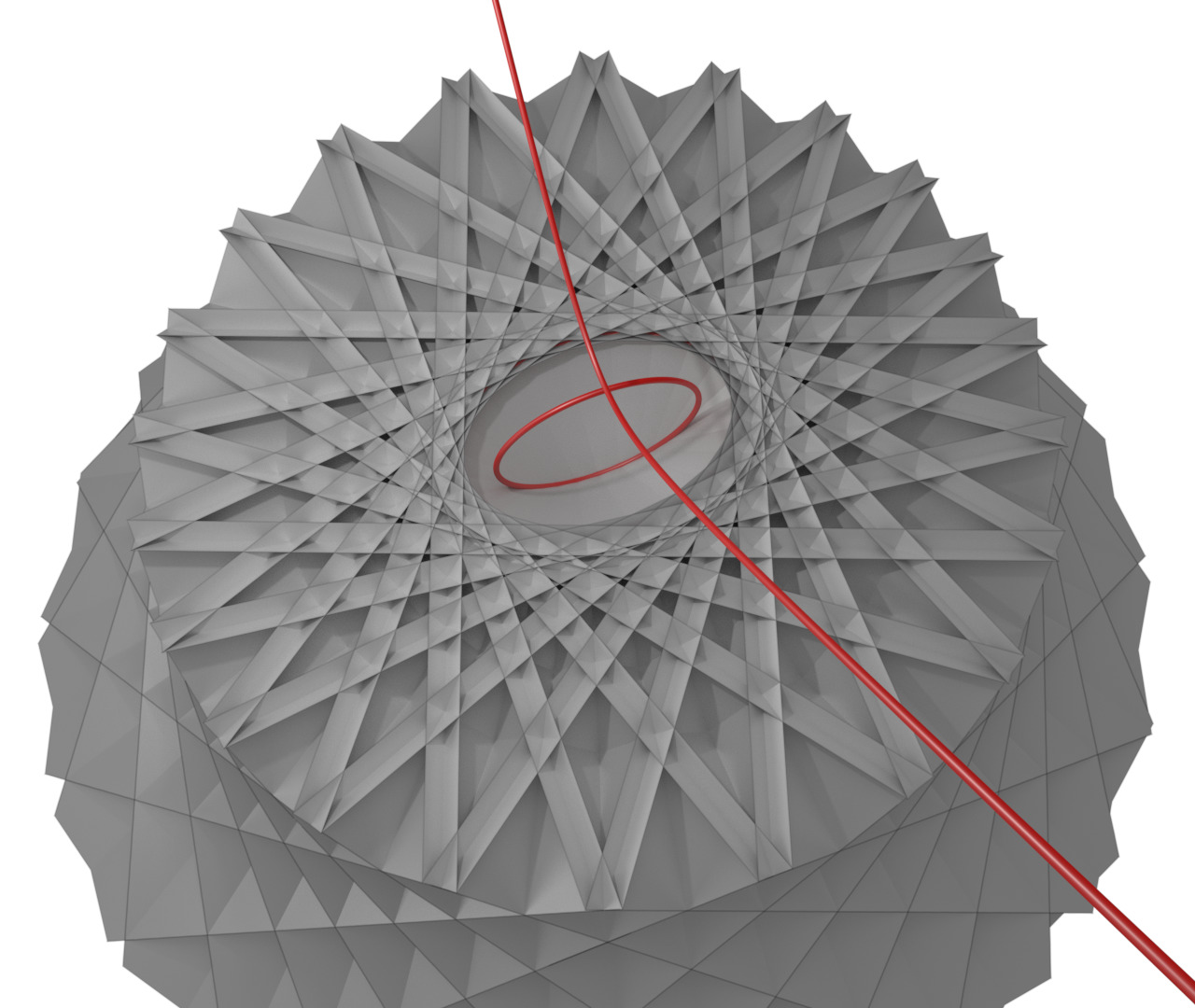}
  \caption{A three-dimensional octahedral extension of an \ICe cut off at $z=\epsilon\neq0$ so that the associated checkerboard IC-net is visible.}
  \label{pic_checkerclipping}
\end{figure} 
In this manner, one generates a one-parameter family of checkerboard IC-nets since only every other quadrilateral formed by the lines is associated with an incircle. Moreover, the lines and circles may be consistently oriented so that one obtains, by definition \cite{AB,BST19}, a one-parameter family of checkerboard IC-nets with the checkerboard IC-net degenerating to an IC-net for $\epsilon=0$. In general, that is, without imposition of the constraints \eqref{E52}, it is evident that the following holds. 

\begin{corollary}
The (suitably oriented) lines of intersection of an octahedral grid of planes of the type \eqref{E50}, \eqref{E51} with any plane $z=\mbox{const}$ form a checkerboard IC-net.
\end{corollary}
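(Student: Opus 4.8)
The plan is to deduce the statement from the circular-cone property established in Theorem~\ref{planarweb}, combined with the elementary fact that a circular cone of opening angle $\pi/2$ with vertical axis meets every horizontal plane in a circle. First I would record how the octahedral grid restricts to a horizontal plane. Since the unit normals of the four families \eqref{E39} are all of the form $(\pm\jac{cn}(\lambda_i,k),\mp\jac{sn}(\lambda_i,k),1)/\sqrt2$ and hence have third component $1/\sqrt2$, replacing $z$ by a constant $c$ merely augments the constant term of each plane equation; because $\jac{cn}^2(\lambda,k)+\jac{sn}^2(\lambda,k)=1$, every plane of \eqref{E39} meets $z=c$ in an \emph{oriented} line already written in Hesse normal form, with oriented unit normal $(\jac{cn}(\lambda_i,k),\mp\jac{sn}(\lambda_i,k))$ and signed distance read off from the shifted constant term. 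Thus the four families of planes produce four families of oriented lines in the plane $z=c$, indexed as in \eqref{E50} by $n_i\in\Z$.

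Next I would isolate the quadrilaterals carrying incircles. By \eqref{E51}, the meeting condition $\lambda_1+\lambda_4=\lambda_2+\lambda_3$ of Theorem~\ref{planarweb} is equivalent, on the grid \eqref{E50}, to the purely combinatorial relation $n_1+n_4=n_2+n_3$. For any quadruple $(n_1,n_2,n_3,n_4)$ satisfying this relation, Theorem~\ref{planarweb} guarantees that the corresponding four planes meet in a single point $(x_0,y_0,z_0)$ and are in contact with a common circular cone of opening angle $\pi/2$ whose axis is perpendicular to $z=0$. Intersecting this configuration with the horizontal plane $z=c$ turns the cone into the circle of radius $|z_0-c|$ centred at $(x_0,y_0)$ and turns the four tangent planes into four lines tangent to that circle. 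Hence these four oriented lines circumscribe an oriented circle, the orientation being inherited from the consistently tilted planes via the cyclographic model of Laguerre geometry.

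Then I would identify the checkerboard combinatorics. On $z=0$ (that is, $s_2=0$) the planes of families $1,3$ and of families $2,4$ with equal parameter coincide while carrying opposite orientations, so the four families collapse to the two families of the \ICe of Corollary~\ref{gridofplanes}; for $c\neq0$ each such coinciding pair splits into two parallel oriented lines shifted by $\pm c$ in signed distance, which is exactly the doubling that produces a checkerboard arrangement. The small quadrilateral sitting at a former crossing is bounded by one line from each family with indices $(n_1,n_2,n_3,n_4)=(n,n',n,n')$, which satisfies $n_1+n_4=n_2+n_3$, so by the previous paragraph it admits an incircle, whereas the interstitial quadrilaterals correspond to index quadruples violating the relation and carry none. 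This is precisely the checkerboard pattern of incircles. Collecting the four families of oriented lines together with the incircles attached to the quadrilaterals satisfying $n_1+n_4=n_2+n_3$ yields, by the definition in \cite{BST19}, a checkerboard IC-net; letting $c$ vary then produces the asserted one-parameter family, degenerating to an IC-net at $c=0$.

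The main obstacle will be the orientation bookkeeping rather than the incidence geometry. One must check that the orientations induced on the four lines by the consistently tilted planes are compatible with a genuine \emph{oriented} incircle, i.e.\ that the signs $\epsilon_i$ of the inscribed circle (in the sense of the determinant criterion \eqref{E32d} used for CC-grids) come out correctly on the checkerboard squares and that the resulting circle is real for every admissible $c$; this is the delicate point, exactly analogous to the sign condition $\epsilon_1=-\epsilon_2=-\epsilon_3=\epsilon_4$ appearing in the proof of the CC-grid corollary. Verifying it amounts to tracking the two opposite orientations produced when a family-$1$ plane and a family-$3$ plane split apart for $c\neq0$, and confirming that the induced orientation on the circular section of the cone is consistent with oriented contact along all four tangent lines.
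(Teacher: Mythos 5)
Your proposal is correct and takes essentially the same route as the paper, whose own justification is the discussion preceding the corollary: cut the $45$-degree-inclined planes with $z=\epsilon$, observe that each doubled line of the $z=0$ IC-net splits into two oriented lines, and note that the vertices of the circular cones guaranteed by Theorem~\ref{planarweb} (for $\lambda_1+\lambda_4=\lambda_2+\lambda_3$, i.e.\ $n_1+n_4=n_2+n_3$ under \eqref{E51}) become incircles of every other quadrilateral, with the orientations asserted (not verified in detail, just as in your final paragraph) to be consistent. One small correction to your combinatorial bookkeeping: besides the former-crossing quadrilaterals with indices $(n,n',n,n')$, the quadruples $(n+1,n'+1,n,n')$ also satisfy $n_1+n_4=n_2+n_3$ and furnish the other half of the checkerboard incircles (the deformations of the original IC-net incircles), which your cone-vertex criterion already covers even though your narrative mentions only the first type.
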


\subsection{Conical octahedral grids of planes}

B\"ohm \cite{B} has shown that the requirement that the octahedra of an octahedral grid of planes or the hexahedra of a grid of planes of $\Z^3$ combinatorics circumscribe {\em distinct} spheres  is too limiting to obtain nontrivial three-dimensional grids of IC-net-type configurations. However, in this section, we demonstrate that it is the weaker condition of touching cones of {\em a priori} arbitrary symmetry axis and opening angle that provides enough degrees of freedom to generalise the spatial extensions of IC-nets discussed in the preceding. To this end, we now equip the planes of an octaedral grid with an orientation so that the following definition is natural.

\begin{definition}
The point of intersection of four concurrent oriented planes is termed {\em conical} if it constitutes the vertex of a circular cone which is in oriented contact with the four planes. An {\em oriented octahedron} is an octahedron which is generated by the intersection of 8 oriented planes.
An oriented octahedron is called {\em conical} if all vertices of the octahedron are conical. A {\em conical octahedral grid} is an octahedral grid of oriented planes for which all octahedra are conical.
\end{definition}

\begin{remark}
The above notion of conical octahedral grids is reminiscent of the notion of {\em conical nets} introduced in \cite{LPWYW06} which has been used extensively in discrete differential geometry \cite{PW08,BS}. In particular, conical nets constitute a canonical integrability-preserving discretisation of lines of curvature on surfaces admitting a remarkable offset property which, in turn, finds application in architectural design \cite{LPWYW06}. A conical net (of $\Z^2$ combinatorics) is defined as a polyhedral surface composed of planar quadrilaterals such that the four quadrilaterals attached to any vertex touch a circular cone.
\end{remark}

An oriented plane in a three-dimensional Euclidean space may be labelled by a quadruple $(v_1,v_2,v_3,d)$, where $(v_1,v_2,v_3)$ is a unit normal, such that its defining equation is given by
\bela{E57}
  v_1x + v_2 y + v_3 z = d.
\ela
Accordingly, there exists a one-to-one correspondence between oriented planes and the {\em Blaschke cylinder}  (see, e.g., \cite{BS})
\bela{E58}
  \mathcal{B} = \{(v_1,v_2,v_3,d)\in\R^4 : v_1^2 + v_2^2 + v_3^2 = 1\}.
\ela
It is recalled that any planar section of the Blaschke cylinder $\mathcal{B}$ corresponds to the set of planes which are in oriented contact with a circular cone, while the intersection of $\mathcal{B}$ with any three-dimensional affine subspace encodes the oriented tangent planes of a sphere. In particular, four oriented planes are in oriented contact with a circular cone if and only if the corresponding points in $\mathcal{B}$ are coplanar. Thus,  a conical octahedron corresponds to a cuboid, that is, a combinatorial cube with planar faces, in $\mathcal{B}$. This correspondence gives rise to the following theorem.

\begin{theorem}\label{completion}
If the three vertices of one face of an oriented octahedron are conical then the remaining three vertices (and the octahedron) are conical modulo a change of orientation of the plane corresponding to the ``opposite'' face.
\end{theorem}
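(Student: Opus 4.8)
The plan is to transport the statement into the Blaschke model and recognise it as a completion statement (of Cayley--Bacharach type) for a combinatorial cube inscribed in the quadric $\mathcal{B}$. I label the eight oriented planes of the octahedron by points $p_\sigma\in\mathcal{B}$, $\sigma\in\{0,1\}^3$, so that the four planes meeting at a vertex of the octahedron correspond to the four points of a face of the combinatorial cube. By the correspondence recalled before the theorem, a vertex is \emph{conical} exactly when the four points of the associated face are coplanar in $\R^4$, while the fact that the eight planes bound an \emph{octahedron} means that, for each cube face, the four points satisfy the concurrency relation $d=v\cdot V$ at the corresponding octahedron vertex $V$, i.e.\ they lie on the linear hyperplane through the origin with normal $(V,-1)$. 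After relabelling I may take the given conical face to correspond to the cube vertex $000$ and its opposite to $111$, so that the hypothesis reads: the three faces through $000$ are coplanar; and the claim reads: the three faces through $111$ become coplanar once $p_{111}$ is replaced, if necessary, by its antipode $-p_{111}$ (the reorientation of the opposite plane).

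First I would show the hypothesis confines seven of the eight points to a hyperplane. Writing $u_\sigma=p_\sigma-p_{000}$, coplanarity of the three faces through $000$ gives $u_{011}\in\langle u_{001},u_{010}\rangle$, $u_{101}\in\langle u_{001},u_{100}\rangle$ and $u_{110}\in\langle u_{010},u_{100}\rangle$, whence every $p_\sigma$ with $\sigma\neq111$ lies in the affine $3$--flat $H=p_{000}+\langle u_{001},u_{010},u_{100}\rangle$ (geometrically, by the hyperplane--section property of $\mathcal{B}$, the seven corresponding planes are tangent to a common sphere $\mathcal{B}\cap H$). Inside $H\cong\mathbb{P}^3$ the three ``opposite'' faces are the planes $L_1=\langle p_{100},p_{101},p_{110}\rangle$, $L_2=\langle p_{010},p_{011},p_{110}\rangle$, $L_3=\langle p_{001},p_{011},p_{101}\rangle$, and three planes in $\mathbb{P}^3$ meet in a single point $q$; this $q$ is the unique completion rendering all three opposite faces planar. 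It then remains to place $q$ back on $\mathcal{B}$ and to identify its underlying plane. For the first, the eight points $\{p_\sigma:\sigma\neq111\}\cup\{q\}$ are the complete intersection in $H\cong\mathbb{P}^3$ of the three quadrics $Q_i$ given by the products of the two opposite face--planes in the $i$--th direction; since the quadric surface $\mathcal{B}\cap H$ passes through seven of these eight points, Cayley--Bacharach (degree count $2+2+2-4=2$) forces it through the eighth, so $q\in\mathcal{B}$. For the second, the concurrency relations at the three vertices $A',B',C'$ around $111$ place $L_1,L_2,L_3$ in the respective origin--hyperplanes $(V,-1)^\perp$, so $q$ lies in their intersection; equivalently the plane underlying $q$ passes through $A',B',C'$ and hence coincides with the opposite plane. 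Together with $q\in\mathcal{B}$ this gives $q\in\{\pm p_{111}\}$, which is precisely the assertion modulo a reorientation of the opposite plane.

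The conceptual skeleton above is short; the main obstacle is the non-degeneracy needed to run it. Cayley--Bacharach requires the three quadrics $Q_1,Q_2,Q_3$ to meet transversally in eight \emph{distinct} points, the faces $L_1,L_2,L_3$ to span genuine planes meeting in a single point rather than sharing a line, and $A',B',C'$ to be non-collinear so that the plane through them is determined. I would either assume the octahedron to be in sufficiently general position and then discharge that hypothesis by a Zariski--density/continuity argument, or verify the requisite openness directly from the explicit elliptic parametrisations of the preceding sections. The residual bookkeeping is to confirm that the sign in $q=\pm p_{111}$ is governed by the orientation of the single opposite plane and of no other; this is immediate once one notes that each concurrency hyperplane $(V,-1)^\perp$ is invariant under $p\mapsto -p$, while the line $L_1\cap L_2\cap L_3$ meets the tangency locus $\mathcal{B}\cap H$ exactly in the antipodal pair $\pm p_{111}$.
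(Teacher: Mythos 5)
Your proposal is correct and takes essentially the same route as the paper: both translate the oriented planes into a combinatorial cube inscribed in the Blaschke cylinder $\mathcal{B}$, use the hypothesis to pin down seven vertices of an incomplete cuboid, complete it by planarity, and conclude via the fact that a quadric through seven vertices of a cuboid contains the eighth. The only differences are expository rather than structural --- the paper cites that quadric lemma from \cite{BS} while you prove it inline by Cayley--Bacharach for the complete intersection of the three face-pair quadrics, and you make explicit the identification $q=\pm p_{111}$ via the concurrency hyperplanes and the non-collinearity of the three opposite octahedron vertices, where the paper compresses this into ``seven faces of an octahedron uniquely determine the eighth'' (a minor slip in your closing remark: $L_1\cap L_2\cap L_3$ is generically a single point, not a line, and only one of the antipodal pair $\pm p_{111}$ lies in the $3$-flat $H$).
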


\begin{proof}
We consider the oriented plane corresponding to the privileged face and the 6 oriented planes which are combinatorially linked (via an edge or a vertex of the face) to this plane. These 7 planes correspond to 7 vertices of an incomplete cuboid in $\mathcal{B}$ since the three vertices of the face are conical. The eighth vertex of the cuboid is uniquely determined via the planarity of its faces. Now, a general theorem states \cite{BS} that if 7 vertices of a cuboid lie on a quadric then so does the eighth vertex. Accordingly, the octahedron associated with the cuboid constructed in this manner is conical. Moreover, since 7 faces of an octahedron uniquely determine the eighth, the two octahedra coincide and the proof is complete.
\end{proof}

\begin{remark} \label{janputhisfootdown}
The proof of the above theorem gives rise to the following constructive statement. Given 7 planes of an incomplete oriented octahedron such that the three vertices in which quadruples of planes meet are conical, there exists a unique eighth oriented plane which completes the configuration to a conical octahedron. 
\end{remark}

In order to present a constructive proof of the existence of conical octahedral grids which are not necessarily derived from (checkerboard) IC-nets, it is convenient to  adopt the following definition.

\begin{definition}
Two oriented octahedra are {\em in oriented contact along an edge} if the two sets of six oriented face planes of the two octahedra incident with the vertices of a common edge coincide (cf.\ Figure \ref{orientedcontact}). Several oriented octahedra are {\em in oriented contact} if all pairs of octahedra which have an edge in common are in oriented contact along the edge.
\end{definition}

\begin{figure}
  \centering
  \includegraphics[scale=0.12]{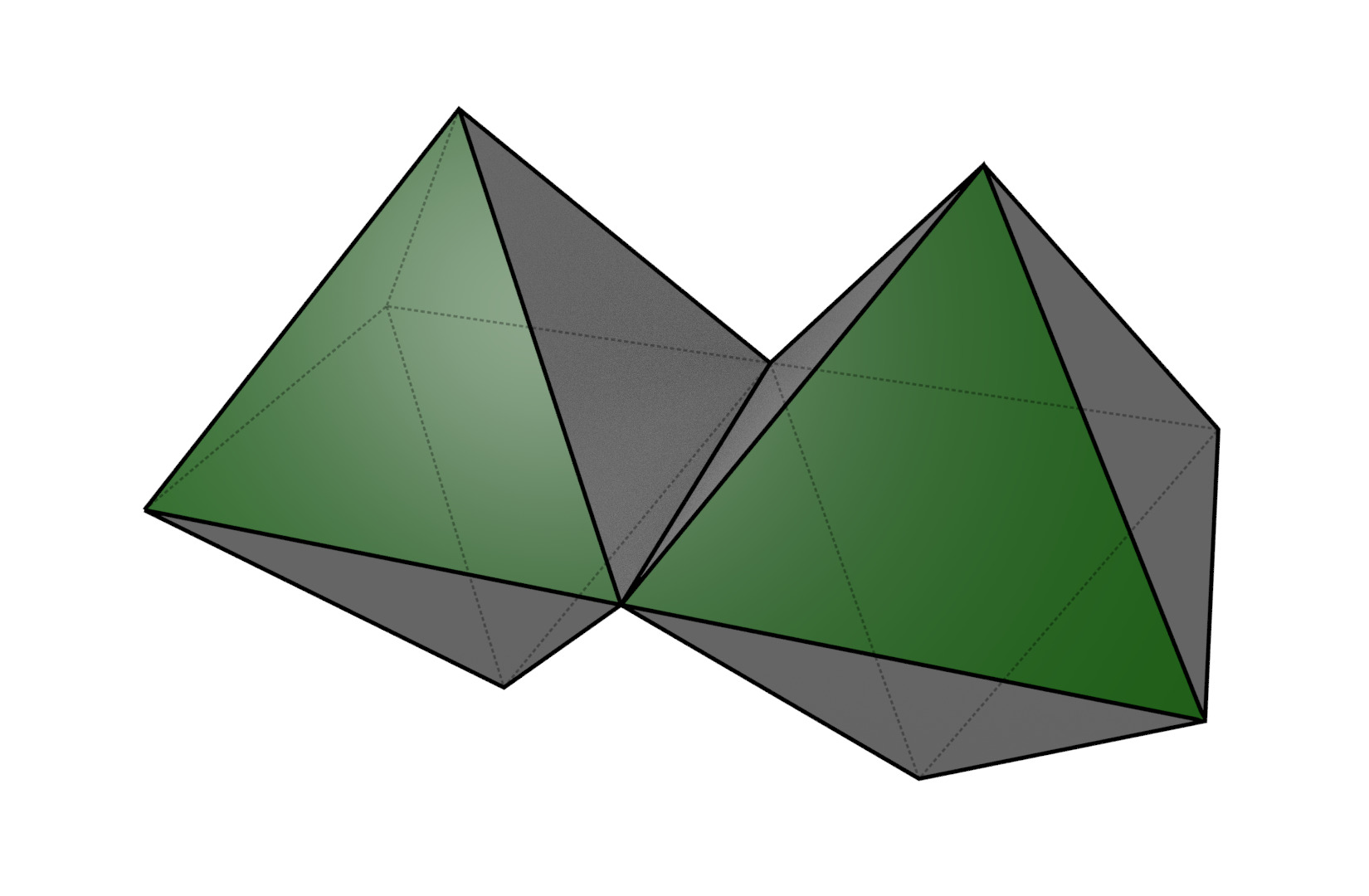}
  \caption{A pair of octahedra which are in oriented contact along an edge,
    that is, the six pairs of corresponding faces incident with a vertex of the common edge (e.g.\ the two highlighted faces) have coinciding oriented face planes.}
\label{orientedcontact}
\end{figure} 

Since a conical octahedron corresponds to a cuboid in the Blaschke cylinder and any cuboid is contained in a three-dimensional subspace, we first observe that the 8 planes of any conical octahedron are in oriented contact with a (possibly ``degenerate'') sphere. In fact, this sphere is the same for all octahedra of a conical octahedral grid.

\begin{theorem}
The planes of a conical octahedral grid are in oriented contact with a common (possibly degenerate) sphere.
\end{theorem}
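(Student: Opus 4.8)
The plan is to carry out the entire argument on the Blaschke cylinder $\mathcal{B}$, translating "eight planes in oriented contact with a common sphere" into "eight points of $\mathcal{B}$ lying in a common three-dimensional affine section", and then to show that this section is forced to be the same for every octahedron of the grid by exploiting the large overlap between neighbouring octahedra.

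First I would dispose of a single conical octahedron. By the correspondence recalled in the discussion preceding Theorem~\ref{completion}, its eight oriented face planes correspond to the eight vertices of a cuboid in $\mathcal{B}$, and any cuboid is contained in some three-dimensional affine subspace $H\subset\R^4$. Since the intersection of $\mathcal{B}$ with any three-dimensional affine subspace encodes precisely the oriented tangent planes of a sphere, the eight points lie in $H\cap\mathcal{B}$ and the eight planes are therefore in oriented contact with one and the same (possibly degenerate) sphere $S$. This settles the claim for each octahedron separately; the real content is that $H$, and hence $S$, is unchanged when one passes to an edge-adjacent octahedron.

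Next I would compare two octahedra that are in oriented contact along a common edge. Labelling the eight faces of an octahedron by sign vectors $(\epsilon_1,\epsilon_2,\epsilon_3)\in\{+,-\}^3$, which is exactly the vertex labelling of the associated cuboid, the six face planes incident with the two endpoints of an edge correspond to two adjacent facets of the cuboid, i.e.\ to two combinatorial squares sharing a single edge. For a non-degenerate cuboid two adjacent facets are not coplanar, so their six vertices affinely span the full three-dimensional subspace $H$. Consequently the six shared oriented planes already determine their three-dimensional section of $\mathcal{B}$ uniquely, whence the two octahedra give rise to the same subspace $H$ and thus to the same sphere $S$. Equivalently, four among the six shared planes are in general position and are tangent to a unique sphere, which both octahedra are forced to share.

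Finally I would invoke connectivity. In the octahedral(–tetrahedral) honeycomb underlying the grid the octahedra are linked into a connected network by the edge-adjacency relation, and in a conical octahedral grid edge-adjacent octahedra automatically agree on the six face planes incident with the shared edge, since the four oriented planes meeting at each shared (conical) vertex belong to both octahedra; hence neighbouring octahedra are in oriented contact along that edge in the sense of Figure~\ref{orientedcontact}. Propagating the equality $S_1=S_2$ along any chain of edge-adjacent octahedra then shows that a single sphere $S$ is touched by the planes of every octahedron, and therefore by all planes of the grid. I expect the main obstacle to be the middle step: pinning down that the six shared planes form the "two-adjacent-facets" configuration and that they span three dimensions, together with handling the degenerate cuboids (two adjacent facets coplanar), for which $H$ is no longer unique and $S$ must be permitted to degenerate to a cone or a plane — exactly the "possibly degenerate" sphere anticipated in the statement.
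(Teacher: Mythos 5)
Your proposal is correct and follows essentially the same route as the paper's own proof: a conical octahedron corresponds to a cuboid in the Blaschke cylinder, hence lies in a three-dimensional affine section encoding a common (possibly degenerate) sphere, and two octahedra in oriented contact along an edge share six oriented planes, which forces their spheres to coincide. You merely make explicit two points the paper leaves implicit --- that the six shared points (two adjacent cuboid facets) affinely span the three-dimensional subspace, and the connectivity propagation through the grid --- which strengthens rather than changes the argument.
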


\begin{proof}
We consider any two conical octahedra which are in oriented contact along a common edge. Each of the two sets of oriented planes of the octahedra is in oriented contact with a sphere. These two spheres must coincide since the two octahedra have 6 oriented planes in common.
\end{proof}

\begin{remark}
It is observed that, in the case of octahedral grids of planes derived from the systems of confocal quadrics in Minkowski space \eqref{E35}, \eqref{E36} and encapsulated in \eqref{E50}, \eqref{E51}, the above-mentioned common sphere is indeed degenerate.  
\end{remark}

The proof of the following theorem now provides a constructive method of generating conical octahedral grids.

\begin{theorem}\label{construction}
A conical octahedral grid is uniquely determined by two conical octahedra which are in oriented contact along an edge.
\end{theorem}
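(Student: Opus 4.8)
The plan is to prove existence and uniqueness simultaneously by a propagation (induction) argument whose single engine is the completion result recorded in Theorem~\ref{completion} and Remark~\ref{janputhisfootdown}: once seven of the eight oriented face planes of a grid octahedron are known and the three vertices lying on the face opposite the missing one are conical, the eighth plane is \emph{uniquely} determined and the octahedron is conical. Since in a conical octahedral grid every octahedron is conical, every vertex is conical; hence whenever seven planes of a grid octahedron have already been reconstructed, the eighth is forced. Thus it suffices to organise the octahedra of the grid into a sequence, beginning with the two given octahedra, such that each octahedron in the sequence shares at least seven of its face planes with the octahedra preceding it.

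First I would record the relevant combinatorics. The octahedra of an octahedral grid carry the structure of a face-centred (FCC-type) lattice: the eight faces of each octahedron split into four ``opposite'' pairs, one pair per family of the octahedral web, and the cell across any \emph{face} of an octahedron is a tetrahedron, not an octahedron (the tetrahedral--octahedral honeycomb). Consequently two octahedra never share a full face plane, so the richest adjacency between two grid octahedra is being \emph{in oriented contact along an edge}, in which case they share exactly six face planes (those incident with the two endpoints of the common edge) and differ in one plane of each of two families. This is precisely why a single octahedron cannot seed the reconstruction: each of its edge-neighbours shares only six of its planes, one short of the seven required by Remark~\ref{janputhisfootdown}. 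Two octahedra $O_1,O_2$ in oriented contact along an edge, however, furnish ten planes --- six shared, plus two extra from each --- and I would check that this ``thickened'' seed exposes a neighbouring grid octahedron already carrying seven of these ten planes (six from one of the $O_i$ together with one of the extra planes contributed by the other). Completing it by Remark~\ref{janputhisfootdown} enlarges the known configuration by one plane, and the enlarged configuration again contains an adjacent octahedron with seven known planes.

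I would then set up the induction formally: maintain as invariant a connected region of reconstructed octahedra that always contains an edge-adjacent pair along its growing front, verify that such a pair always exposes a fresh grid octahedron with seven determined planes, and apply Remark~\ref{janputhisfootdown} to adjoin its unique eighth plane. Uniqueness of the completion at each step yields uniqueness of the whole grid: any two conical octahedral grids sharing $O_1,O_2$ must, plane by plane, agree. Read constructively, the same steps give existence and consistency --- when an octahedron is reachable through two different front pairs the two completions coincide, each being the unique plane produced by Remark~\ref{janputhisfootdown}, and every adjoined octahedron is conical by construction, so the resulting grid is conical. The translation of the engine into the Blaschke cylinder (a conical octahedron $\leftrightarrow$ a cuboid inscribed in $\mathcal{B}$, seven of whose vertices determine the eighth on the quadric $\mathcal{B}$) may be invoked to justify the completion step algebraically.

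The main obstacle I anticipate is the combinatorial bookkeeping of the third paragraph: proving that the front can be advanced so that \emph{every} octahedron of the (infinite) grid is eventually reached while at each step a genuine seven-plane overlap is available. This hinges on a careful analysis of how octahedra share planes in the FCC lattice of cells --- in particular on the parity constraint that distinguishes octahedral from tetrahedral cells and governs which index shifts connect one octahedron to the next --- and on checking that the two extra planes supplied by the second seed octahedron propagate correctly, keeping the overlap at seven rather than six as the front moves in each lattice direction.
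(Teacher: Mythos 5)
Your overall strategy --- grow the grid octahedron by octahedron, using the completion statement of Theorem~\ref{completion}/Remark~\ref{janputhisfootdown} as the engine, with the cuboid-in-the-Blaschke-cylinder picture propagating conicality --- is the same in spirit as the paper's proof. But there is a genuine gap exactly where you wave it away. You assert that ``when an octahedron is reachable through two different front pairs the two completions coincide, each being the unique plane produced by Remark~\ref{janputhisfootdown}.'' That remark asserts uniqueness of the eighth plane \emph{given one fixed seven-plane configuration}; when the advancing front produces the same plane of the web along two different routes, the two candidate planes arise from two \emph{different} seven-plane configurations, and nothing in the completion result forces them to agree. This is precisely the nontrivial closing condition of the whole theory. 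The paper does not attempt to prove it from scratch either: it invokes Sauer \cite{Sauer1925}, who showed that an octahedral grid of (non-oriented) planes is already uniquely determined by two edge-adjacent octahedra sharing six planes, and who established the required coincidence of independently generated planes by means of a theorem of Chasles. Without that incidence theorem (or an equivalent closure argument of your own), your induction delivers at most uniqueness \emph{relative to an assumed ambient grid}; it does not deliver the consistency, hence existence, of the constructed grid.

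Two further points, both of which the paper also outsources to Sauer. First, the reachability bookkeeping you defer to your last paragraph --- that the front can be advanced so that every octahedron of the tetrahedral--octahedral honeycomb is eventually reached with a genuine seven-plane overlap --- is part of Sauer's combinatorial analysis, not a routine check, so deferring it leaves the induction scheme itself unverified. Second, orientations: the paper is careful to note that Sauer's coincidence of pairs of \emph{non-oriented} planes must be shown to upgrade to coincidence of \emph{oriented} planes; in your self-contained setup this orientation-matching is absorbed into the unproven closure condition and is nowhere addressed. The efficient repair is the paper's: quote Sauer's theorem for the underlying non-oriented grid (combinatorial propagation plus Chasles closure), then add exactly what is new in the conical setting --- propagation of conicality via cuboids in $\mathcal{B}$ and the oriented refinement of the plane coincidences --- rather than rebuilding the closure from Remark~\ref{janputhisfootdown} alone.
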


\begin{remark}
Pairs of conical octahedra which are in oriented contact are easily obtained. One first constructs a conical octahedron by iteratively prescribing and adding as required 7 oriented planes and associated circular cones, and subsequently applying Remark \ref{janputhisfootdown}. If one now selects an edge of this octahedron then six oriented planes of the second octahedron are determined. Accordingly, four vertices of the second octahedron are known. The two vertices of the common edge are conical and the other two vertices define two circular cones in oriented contact with the respective triple of concurrent oriented planes. If we now choose a seventh plane which is in oriented contact with one of these circular cones then according to Remark \ref{janputhisfootdown} there exists a unique eighth oriented plane such that the second octahedron is conical.
\end{remark}

\begin{proof}
The proof is a corollary of the analogous theorem for octahedral grids of planes. Thus, Sauer \cite{Sauer1925} has demonstrated that an octahedral grid of planes is uniquely determined by two octahedra which share an edge and have 6 planes in common. It is evident that if two such octahedra are prescribed then neighbouring octahedra are iteratively and uniquely determined by the existing planes. However, it is easy to see that this procedure is only consistent if a closing condition is satisfied, that is, it generates pairs of planes which are required to coincide and Sauer has employed a theorem due to Chasles to show that this is indeed the case. If we now start with two conical octahedra which are in oriented contact along an edge then the representation of conical octahedra as cuboids in the Blaschke cylinder implies that the property of an octahedron being conical propagates as octahedra are being iteratively added. It is emphasised that, in this case, Sauer's coincidence of pairs of non-oriented planes may be shown to translate into the coincidence of pairs of oriented planes as required.
\end{proof}

\begin{remark}
Since oriented planes and circular cones are preserved by Laguerre transformations (see, e.g., \cite{BS}), a conical octahedral grid constitutes  a Laguerre geometric object, that is, any conical octahedral grid is mapped to a multi-parameter class of conical octahedral grids by means of the group of Laguerre transformations. This applies, in particular, to spatially extended (checkerboard) IC-nets with the orientation of the planes being determined by the orientation of the relevant circular cones. Since, in this case, the axes of all cones are parallel, it is consistent to assume that all cones have the ``same'' orientation. 
\end{remark}

\begin{remark}
We conclude by observing that there exists a one-to-one correspondence between conical octahedral grids and {\em spherical checkerboard IC-nets} \cite{AB}. Indeed, if one translates the planes and associated cones of a conical octahedral grid in such a manner that the vertices of the cones coincide with the centre of the common sphere then the cones and planes 
intersect the sphere in circles and great circles respectively. Moreover, the orientation of the cones and planes naturally induces an orientation of the circles and great circles so that these circles are in oriented contact. Hence, by definition, one obtains a spherical checkerboard IC-net. Conversely, any spherical checkerboard IC-net may be related to a conical octahedral grid.
\end{remark}

\begin{acknowledgements}
This research was supported by the DFG Collaborative Research Center TRR 109 ``Discretization in Geometry and Dynamics''. W.K.S.\ was also supported by the Australian Research Council (DP1401000851). A.V.A.\ was also supported by the European Research Council (ERC) under the European Union's Horizon 2020 research and innovation programme (grant agreement No 78818 Alpha).
\end{acknowledgements}



\end{document}